\numberwithin{equation}{section}
\newcommand{\nlb}{{\ensuremath{\textnormal{b}}}}
\newcommand{\nlc}{{\ensuremath{\textnormal{c}}}}
\newcommand{\nld}{{\ensuremath{\textnormal{d}}}}
\newcommand{\rmc}{{\ensuremath{\mathrm{c}}}}
\newcommand{\rmd}{{\ensuremath{\mathrm{d}}}}
\newcommand{\rme}{{\ensuremath{\mathrm{e}}}}
\newcommand{\rmR}{{\ensuremath{\mathrm{R}}}}
\newcommand{\sfp}{{\ensuremath{\mathsf{p}}}}
\newcommand{\sfq}{{\ensuremath{\mathsf{q}}}}
\newcommand{\sfv}{{\ensuremath{\mathsf{v}}}}
\newcommand{\sfC}{{\ensuremath{\mathsf{C}}}}
\newcommand{\sfE}{{\ensuremath{\mathsf{E}}}}
\newcommand{\sfP}{{\ensuremath{\mathsf{P}}}}
\newcommand{\scrE}{{\ensuremath{\mathscr{E}}}}
\newcommand{\scrF}{{\ensuremath{\mathscr{F}}}}
\newcommand{\scrH}{{\ensuremath{\mathscr{H}}}}
\newcommand{\scrR}{{\ensuremath{\mathscr{R}}}}
\newcommand{\bbE}{{\ensuremath{\mathbb{E}}}}
\newcommand{\bbN}{{\ensuremath{\mathbb{N}}}}
\newcommand{\bbP}{{\ensuremath{\mathbb{P}}}}
\newcommand{\bbR}{{\ensuremath{\mathbb{R}}}}
\newcommand{\N}{\bbN}						%Natürliche Zahlen
\newcommand{\R}{\bbR}						%Reelle Zahlen
\renewcommand{\d}{\,\mathrm{d}}				%Kleines Integral-d
\let\limsup\undefined
\DeclareMathOperator*{\limsup}{lim\,sup}		%Limes superior
\DeclareMathOperator{\sgn}{sgn}				%Signum
\let\originalleft\left			
\let\originalright\right
\renewcommand{\left}{\mathopen{}\mathclose\bgroup\originalleft}
\renewcommand{\right}{\aftergroup\egroup\originalright}
\newcommand{\onto}[2]{\left. #1\right\vert_{#2}}					%Einschränkung
\newcommand{\mapdef}[3][]{\ifthenelse{\isempty{#1}}{#2\quad\longmapsto\quad #3}{#1\colon\quad #2\quad\longmapsto\quad #3}}		%Abbildungsdefinition, abgesetzt
\newcommand{\der}[2][]{\ifthenelse{\isempty{#1}}{\frac{\nld}{\nld #2}}{\left.\frac{\nld}{\nld #2}\right\vert_{#1}}}				%Ableitungsoperator
\newcommand{\forevery}[1]{\quad\text{for every~}#1\checknarg}		%Für alle
	\newcommand{\checknarg}{\@ifnextchar\bgroup{\gobblenarg}{}}
	\newcommand{\gobblenarg}[1]{\@ifnextchar\bgroup{,\ \! #1\gobblenarg}{,\ \! #1}}
\theoremstyle{definition}
\newtheorem{bump}{Bump}[section]
\newenvironment{remark}{\pushQED{\qed}\remarkx}{\popQED\endremarkx}		%Bemerkung mit schwarzem Kästchen		
\newenvironment{example}{\pushQED{\qed}\examplex}{\popQED\endexamplex}		%Beispiel mit schwarzem Kästchen
\theoremstyle{plain}
\newtheorem{theorem}[bump]{Theorem}
\newtheorem{definition}[bump]{Definition}
\newtheorem{lemma}[bump]{Lemma}
\newtheoremstyle{cited}
	{\topsep}		%Space above
	{\topsep}		%Space below
	{\itshape}		%Body font
  	{}				%Indent amount
	{\bfseries}		%Theorem head font
	{\textbf{.}}	%Punctuation after Theorem head
	{.5em}			%Space after Theorem head
	{\thmname{#1} \thmnumber{#2} \thmnote{\normalfont#3}}		%Theorem head
\theoremstyle{cited}			%Umgebungen für Zitate (d.h. ohne runde Klammern)
\newtheorem{citetheorem}[bump]{Theorem}
\newtheorem{citelemma}[bump]{Lemma}
\let\@fnsymbol\@arabic	 		%Arabische Ziffern auf der Titelseite
\def\nonumberfootnote{\xdef\@thefnmark{}\@footnotetext}			%Fußnote ohne Nummer
\renewenvironment{abstract}{\begin{quotation}\small
\noindent\textbf{Abstract}\quad}
{\end{quotation}}							%Neue Abstract-Umgebung
\newcommand{\mms}{\mathit{M}}				%Basisraum
\newcommand{\met}{\rho}						%Metrik
\newcommand{\Rmet}{g}						%Riemannian metric
\newcommand{\meas}{\mathfrak{m}}			%Referenzmaß
\newcommand{\Id}{\mathrm{Id}}				%Identitätsabbildung
\newcommand{\RCD}{\mathrm{RCD}}				%RCD-Bedingung
\newcommand{\CD}{\mathrm{CD}}				%CD-Bedingung
\newcommand{\bounded}{\nlb}					%Beschränkt
\newcommand{\comp}{\nlc}					%Kompakt
\newcommand{\loc}{\mathrm{loc}}				%Lokal
\newcommand{\Ric}{\mathrm{Ric}}				%Ricci-Krümmungstensor
\newcommand{\Cont}{C}					%Stetige Funktionen
\newcommand{\Ell}{\mathit{L}}				%Lebesgue-integrierbare Funktionen
\newcommand{\Lip}{\mathrm{Lip}}				%Lipschitz-stetige Funktionen/Kurven
\newcommand{\Sob}{\mathit{W}}				%Sobolev-Raum
\newcommand{\Geo}{\mathrm{Geo}}				%Geodätische
\DeclareMathOperator{\Hess}{Hess}			%Hessesche
\newcommand{\ChHeat}{\sfP}					%Wärmefluss der Cheeger-Energie
\newcommand{\WHeat}{\vec{\sfP}}					%Wärmefluss der Entropie
\newcommand{\Schr}[1]{\sfP^{#1}}			%Schrödinger-Halbgruppe
\newcommand{\eval}{\mathsf{ev}}					%Evaluationsabbildung
\newcommand{\push}{\sharp}					%Bildmaßoperator
\newcommand{\kk}{\underline{k}}				%Altes k quer
\newcommand{\kkk}{\underline{\ell}}					%Altes k quer
\newcommand{\B}{X}						%Brownsche Bewegung
\newcommand{\Q}{Q}
\newcommand{\W}{W}
\newcommand{\NN}{N}
\newcommand{\M}{M}
\newcommand{\One}{\mathbbm{1}}				%Indikatorfunktion
\renewcommand{\lll}{{\underline{\ell}}}		%l quer
\newcommand{\Par}{\sslash}					%Paralleltransport
\newcommand{\Cut}{\mathrm{cut}}				%Cutlocus
\newcommand{\sdist}{\rho^{\pm}}		%Signierte Distanzfunktion
\newcommand{\Ball}{\mathsf{B}}				%Ball
\newcommand{\pa}{\sslash}
\newcommand{\Kato}{\mathcal{K}}
\begin{document}

\title{Heat flow regularity, Bismut--Elworthy--Li's derivative formula, and pathwise couplings on Riemannian manifolds\\with Kato bounded Ricci curvature}
\author{Mathias Braun\thanks{University of Bonn, Institute for Applied Mathematics, Endenicher Allee 60, 53115 Bonn, Germany, \textsf{braun@iam.uni-bonn.de}. Funded by the European Research Council through the ERC-AdG ``RicciBounds'', ERC project 10760021.}\and Batu Güneysu\thanks{University of Bonn, Mathematical Institute, Endenicher Allee 60, 53115 Bonn, Germany, \textsf{gueneysu@math.uni-bonn.de}.}}
\date{\today}
\nonumberfootnote{\textit{Key words and phrases.} Ricci curvature, Bismut--Elworthy--Li formula, coupling, Kato class.}
\maketitle

\begin{abstract} 
We prove that if the Ricci tensor $\Ric$ of a geodesically complete Riemannian manifold $\mms$, endowed with the Riemannian distance $\met$ and the Riemannian measure $\meas$, is bounded from below by a continuous function $k\colon\mms\to\R$ whose negative part $k^-$ satisfies, for every $t>0$, the exponential integrability condition
\begin{equation*}
\sup_{x\in\mms} \bbE\Big[\rme^{\int_0^t k^-(\B_r^x)/2\d r}\,\One_{\{t < \zeta^x\}}\Big] < \infty,
\end{equation*}
then the lifetime $\zeta^x$ of Brownian motion $X^x$ on $\mms$ starting in any $x\in \mms$ is a.s.~infinite. This assumption on $k$ holds if $k^-$ belongs to the Kato class of $\mms$. We also derive a  Bismut--Elworthy--Li derivative formula for $\nabla \ChHeat_tf$ for every $f\in L^\infty(\mms)$  and $t>0$ along the heat flow $(\ChHeat_t)_{t\geq 0}$ with generator $\Delta/2$, yielding its $\Ell^\infty$-$\Lip$-regularization as a corollary.

Moreover, given the stochastic completeness of $\mms$, but without any assumption on $k$ except continuity, we prove the equivalence of lower boundedness of $\Ric$ by $k$ to the existence, given any $x,y\in\mms$, of a coupling $(\B^x,\B^y)$ of Brownian motions on $\mms$ starting in $(x,y)$ such that a.s.,
\begin{equation*}
\met\big(\B_t^x,\B_t^y\big) \leq \rme^{-\int_s^t \kk\left(\B_r^x,\B_r^y\right)/2\d r}\,\met\big(\B_s^x,\B_s^y\big)
\end{equation*}
holds for every $s,t\geq 0$ with $s\leq t$, involving the ``average'' $\smash{\kk(u,v) := \inf_\gamma \int_0^1 k(\gamma_r)\d r}$ of $k$ along geodesics from $u$ to $v$. 

Our results generalize to weighted Riemannian manifolds, where the Ricci curvature is replaced by the corresponding Bakry--Émery Ricci tensor.
\end{abstract}
\clearpage
\tableofcontents

\section{Main results}

Let $(\mms,\Rmet)$ be a smooth, geodesically complete, noncompact, connected Riemannian manifold without boundary. The metric  $\langle\cdot,\cdot\rangle := \Rmet(\cdot,\cdot)$  induces the Riemannian distance $\met$ and the Riemannian measure $\meas$. W.r.t.~$\rho$, we write $\Ball_r(x)$ for the open ball of radius $r>0$ around $x\in\mms$, $\Lip(\mms)$ for the space of real-valued Lipschitz functions on $\mms$, and $\Lip(f)$ for the Lipschitz constant of any $f\in\Lip(\mms)$. All appearing vector spaces of functions and sections of bundles are considered as being real and, unless explicitly stated otherwise, all appearing Lebesgue and Sobolev spaces are understood w.r.t.~$\meas$. With the usual abuse of notation, the fiberwise norm both on $T\mms$ and $T^*\mms$ is $\vert\cdot\vert := \langle\cdot,\cdot\rangle^{1/2}$. Let $\nabla$ be the Levi-Civita connection on $\mms$ and $\Ric$ be the induced Ricci curvature. We recall that by geodesic completeness, the Laplace--Beltrami operator $\Delta$ is an essentially self-adjoint operator in $L^2(\mms)$ when defined initially on smooth compactly supported functions \cite{strichartz1983}, and thus admits a unique -- non-relabeled -- self-adjoint extension. Let $(\ChHeat_t)_{t\geq 0}$ be the heat flow in $L^2(\mms)$ with generator $\Delta/2$, i.e.~$\ChHeat_t := \rme^{t\Delta/2}$ via spectral calculus. For every $x\in \mms$, let $X^x\colon [0,\zeta^x)\times \Omega\to \mms$ be a corresponding adapted diffusion process (\emph{Brownian motion}) starting at $x\in\mms$ with lifetime $\zeta^x$, defined on a filtered probability space $(\Omega,\scrF_*,\bbP)$, see \cite{elworthy1982, hsu2002, ikeda1981, wang2014} for particular constructions of $X^x$.

Throughout, we fix a continuous function $k\colon \mms \to\R$. We write ``$\Ric\geq k$ on $\mms$'' if
\begin{equation*}
\Ric(x)(\xi,\xi)\geq k(x)\,\vert\xi\vert^2\quad\text{for every }x\in \mms,\ \xi\in T_x\mms.
\end{equation*}
The goal of this paper is to study the previous condition, where the negative part $k^-$ of $k$, with $k^-(x) := -\min\{k(x),0\}$, obeys the integrability assumption
\begin{equation}\label{Eq:Exp integr assumption}
\sfC_t  <\infty\quad\text{for every }t>0,\quad\text{where}\quad \sfC_t := \sup_{x\in\mms} \bbE\Big[\rme^{\int_0^t k^-(\B_r^x)/2 \d r}\,\One_{\{t < \zeta^x\}}\Big].
\end{equation}

Our main results come in two groups. First, we study analytic and probabilistic \emph{consequences} of the assumption $\Ric\geq k$ on $\mms$ if $k$ satisfies (\ref{Eq:Exp integr assumption}), as described in Section \ref{Sub:Consequ} and stated in Theorem \ref{Th:First theorem} and Theorem \ref{Cor:Dynkin}. Along with this, we treat an explicit class of $k$ for which \eqref{Eq:Exp integr assumption} holds, the so-called \emph{Kato decomposable} ones, and highlight a general condition for $k$ to obey the latter property, Theorem \ref{ayx}. Second, we give equivalent \emph{characterizations} of the condition $\Ric\geq k$ on $\mms$, which are summarized in Section \ref{Sub:Char}, see Theorem \ref{Th:Equivalences} therein, and mostly do not even require \eqref{Eq:Exp integr assumption}.

Besides \cite{erbar2020,guneysu2019}, our article is among the first to systematically study analytic and probabilistic consequences of variable lower Ricci bounds -- and equivalent characterizations of these -- which are not uniformly bounded from below and do not underlie geometric growth conditions. We also stress our novel general sufficient condition from Theorem \ref{ayx} to determine whether a given variable Ricci curvature lower bound is Kato decomposable, while the -- albeit more general -- condition \eqref{Eq:Exp integr assumption} is in general hard to verify directly. Lastly, our equivalence result improves upon previously known ones especially because it involves a pathwise coupling estimate which has just recently been introduced in a slightly different framework \cite{braun2019}.

\subsection{Consequences of variable lower Ricci bounds}\label{Sub:Consequ}

To formulate our first result, given an initial point $x\in\mms$, let $\pa^x$ denote the stochastic parallel transport w.r.t.~$\nabla$ along the sample paths of $\B^x$, i.e.~$\smash{\pa^x_t\colon T_x\mms \to T_{\B^x_t}\mms}$ for all $t\in [0,\zeta^x)$, let the process $\Q^x\colon [0,\zeta^x)\times\Omega\to \mathrm{End}(T_x\mms)$ be defined as the unique solution to the pathwise ordinary differential equation
\begin{equation}\label{Eq:Q ODE}
\rmd \Q_s^x = -\frac{1}{2}\,\Q_s^x\,(\pa_s^x)^{-1}\,\Ric(\B_s^x)\pa_s^x\!\d s,\quad \Q_0^x=\Id_{T_x\mms},
\end{equation}
where $\Ric(X_s^x)$ is regarded as an element of $\smash{\mathrm{End}(T_{X_s^x}\mms)}$. Let $\smash{\W^x\colon [0,\zeta^x)\times\Omega\to T_x\mms}$ denote the anti-development of $\B^x$, a canonically given Euclidean Brownian motion on $T_x\mms$. See \cite{elworthy1982, hsu2002, ikeda1981, wang2014} for details.

\begin{theorem}\label{Th:First theorem} Let $k\colon \mms\to\R$ be a continuous function satisfying \eqref{Eq:Exp integr assumption} and assume that $\Ric\geq k$ on $\mms$. Then
\begin{enumerate}[leftmargin=1.25cm,label=\textnormal{(\roman*)}]
\item $\mms$ is \emph{stochastically complete}, i.e.~
\begin{equation*}
\bbP\big[\zeta^x = \infty\big] = 1\forevery{x\in\mms},
\end{equation*}
\item for every $f\in \Ell^\infty(\mms)$ and every $t>0$, we have \emph{Bismut--Elworthy--Li's derivative formula}
\begin{equation*}
\big\langle\nabla \ChHeat_t f(x),\xi\big\rangle =\frac{1}{t}\,\bbE\left[ f(\B_t^x)\int^{t}_0 \!\big\langle \Q_s^x \xi,\rmd\W_s^x\big\rangle \right]\quad\text{ for every }x\in\mms,\ \xi\in T_x\mms,
\end{equation*}
where the stochastic integral inside the expectation is understood in It\^{o}'s sense, and
\item for every $t >0$, one has the \emph{$\Ell^\infty$-$\Lip$-regularization property} $\ChHeat_t\colon \Ell^{\infty}(\mms)\to \Lip(\mms)$ with
\begin{equation*}
\Lip(\ChHeat_tf) \leq \sqrt{8}\,t^{-1/2}\,\sup_{x\in\mms}\bbE\Big[ \rme^{\int_0^t k^-(\B_r^x)/2 \d r}\Big]\,\Vert f\Vert_{\Ell^\infty}\forevery{f\in \Ell^\infty(\mms)}.
\end{equation*}
\end{enumerate}
\end{theorem}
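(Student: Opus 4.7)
The proof rests on one a priori estimate for the damping process $\Q^x$, combined with a stopping-time localization to bypass the risk of explosion. Differentiating $s\mapsto |\Q_s^x \xi|^2$ and invoking the defining ODE \eqref{Eq:Q ODE} together with the symmetry of $\Ric(\B_s^x)$ gives
$$\frac{\rmd}{\rmd s}|\Q_s^x \xi|^2 = -\big\langle\Ric(\B_s^x)\pa_s^x \Q_s^x\xi,\pa_s^x\Q_s^x\xi\big\rangle\leq -k(\B_s^x)\,|\Q_s^x\xi|^2 \quad\text{on }[0,\zeta^x),$$
whence by Gr\"onwall
\begin{equation*}
|\Q_s^x \xi|^2 \leq \rme^{-\int_0^s k(\B_r^x)\d r}\,|\xi|^2 \leq \rme^{\int_0^s k^-(\B_r^x)\d r}\,|\xi|^2.
\end{equation*}
Together with It\^o's isometry and \eqref{Eq:Exp integr assumption}, this makes the stochastic integral $\int_0^t \langle \Q_s^x\xi,\rmd\W_s^x\rangle$ square-integrable uniformly in the base point $x$---the engine of every estimate below.

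Fix $x_0 \in\mms$ and let $\tau_n := \inf\{t \geq 0 : \B_t^x\notin\Ball_n(x_0)\}$, so that $\tau_n\nearrow\zeta^x$ a.s.\ by geodesic completeness. On the relatively compact $\Ball_n(x_0)$ the Dirichlet heat semigroup $\ChHeat_t^n$ is classical, and the Elworthy--Li argument---applying It\^o's formula to $s\mapsto\langle\nabla\ChHeat_{t-s}^n f(\B_s^x),\pa_s^x\,\Q_s^x\,h(s)\,\xi\rangle$ for $f\in\Cont_c^\infty(\Ball_n(x_0))$ and the Cameron--Martin path $h(s)=s/t$, invoking the Bochner--Weitzenb\"ock identity, and observing that \eqref{Eq:Q ODE} is designed precisely to cancel the resulting Ricci term in the drift---yields, after Bismut's integration by parts,
\begin{equation*}
\big\langle \nabla \ChHeat_t^n f(x),\xi\big\rangle = \frac{1}{t}\,\bbE\Big[f(\B_t^x)\,\One_{\{t<\tau_n\}}\int_0^t \big\langle \Q_s^x\xi,\rmd\W_s^x\big\rangle\Big]
\end{equation*}
for $x\in\Ball_n(x_0)$ and $\xi\in T_x\mms$. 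Cauchy--Schwarz, the It\^o isometry, the $\Q^x$ bound, and the splitting $\ChHeat_t=\ChHeat_{t/2}\circ\ChHeat_{t/2}$ combined with the strong Markov property then convert this into an $n$-uniform Lipschitz estimate of exactly the form in (iii), but with $\tau_n$ in place of $\zeta^x$.

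Applying the uniform Lipschitz control to a sequence of cutoffs $\chi_m\uparrow\One$ and invoking a standard heat-equation uniqueness criterion (Grigoryan's), together with the fact that on a geodesically complete manifold the self-adjoint $L^2$ semigroup $\ChHeat_t$ coincides with the minimal one, forces $\ChHeat_t\One\equiv\One$, i.e.~stochastic completeness (i). Granted (i), the indicator $\One_{\{t<\tau_n\}}$ in the localized Bismut formula is dropped in the limit $n\to\infty$ by dominated convergence---the dominant random variable $\Vert f\Vert_\infty\,\vert\int_0^t\langle\Q_s^x\xi,\rmd\W_s^x\rangle\vert$ lies in $L^1(\bbP)$ by the $\Q^x$ bound and \eqref{Eq:Exp integr assumption}---and a weak-$*$ approximation of $f\in\Ell^\infty$ by $\Cont_c^\infty$ functions completes (ii). Claim (iii) is then immediate from (ii), the It\^o isometry, and the $\Q^x$ bound, with the explicit constant $\sqrt 8$ arising from the $\ChHeat_{t/2}\circ\ChHeat_{t/2}$ splitting that halves the exponent inside \eqref{Eq:Exp integr assumption}.

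The main obstacle is the apparent circularity between (i) and (ii): the global Bismut formula tacitly requires Brownian motion to be non-exploding, while non-explosion is itself deduced from a gradient estimate furnished by (a localized version of) that same formula. The stopping-time localization, sustained by the a priori operator bound on $\Q^x$, is what breaks this circle and allows the integrability hypothesis \eqref{Eq:Exp integr assumption} to be exploited in place of a uniform lower Ricci bound; special care must be taken that the halving of the exponent in \eqref{Eq:Exp integr assumption} via $\ChHeat_{t/2}\circ\ChHeat_{t/2}$ composes correctly on the Dirichlet semigroups $\ChHeat_t^n$ before $n\to\infty$ is taken.
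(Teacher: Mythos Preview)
Your strategy differs from the paper's and contains two genuine gaps.

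\textbf{The integrability gap.} You assert that It\^o's isometry together with \eqref{Eq:Exp integr assumption} makes $\int_0^t\langle\Q_s^x\xi,\rmd\W_s^x\rangle$ square-integrable. But It\^o's isometry gives
\[
\bbE\Big[\Big|\int_0^t\langle\Q_s^x\xi,\rmd\W_s^x\rangle\Big|^2\Big]=\bbE\Big[\int_0^t|\Q_s^x\xi|^2\d s\Big]\leq |\xi|^2\int_0^t\bbE\big[\rme^{\int_0^s k^-(\B_r^x)\d r}\big]\d s,
\]
and the exponent here is $k^-$, not $k^-/2$. Assumption \eqref{Eq:Exp integr assumption} controls only $\sup_x\bbE[\rme^{\int_0^t k^-/2}]$; it says nothing about $\bbE[\rme^{\int_0^t k^-}]$ unless $k$ is Kato decomposable (Lemma~\ref{Le:Khas}). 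Consequently the Cauchy--Schwarz/It\^o-isometry route you propose for the Lipschitz bound does not close under the stated hypothesis. The paper avoids this by applying the Burkholder--Davis--Gundy inequality with exponent $q=1$ (Lemma~\ref{Le:BDG}), which produces $\bbE\big[(\int_0^t|\Q_s^x|^2\d s)^{1/2}\big]$ and allows the estimate $(\int_0^t|\Q_s^x|^2\d s)^{1/2}\leq t^{1/2}\sup_{s\leq t}|\Q_s^x|\leq t^{1/2}\rme^{\int_0^t k^-/2}$ \emph{before} taking expectation. The constant $\sqrt{8}$ is exactly the BDG constant for $q=1$; it has nothing to do with any $\ChHeat_{t/2}\circ\ChHeat_{t/2}$ splitting, and that splitting does not ``halve the exponent'' in \eqref{Eq:Exp integr assumption}.

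\textbf{The stochastic-completeness gap.} Your plan is to deduce $\ChHeat_t\One\equiv\One$ from the uniform $\Ell^\infty$-to-$\Lip$ bound applied to cutoffs $\chi_m\uparrow\One$. But that bound gives only $\Lip(\ChHeat_t\chi_m)\leq C_t\|\chi_m\|_{\Ell^\infty}=C_t$, which in the limit shows $\ChHeat_t\One$ is Lipschitz, not that it is constant. What is actually needed is a \emph{gradient-to-gradient} bound $|\nabla\ChHeat_t f|\leq C_t\|\nabla f\|_{\Ell^\infty}$, so that $\|\rmd\chi_m\|_{\Ell^\infty}\to 0$ forces $\nabla\ChHeat_t\One=0$. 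The paper obtains precisely this from the Feynman--Kac formula for the $1$-form heat semigroup (Theorem~\ref{Th:Form FK}, due to Driver--Thalmaier), valid \emph{without} assuming non-explosion, together with the commutation $\WHeat_t\rmd=\rmd\ChHeat_t$. This is how the circularity you correctly identify is broken in the paper: the $1$-form estimate $|\WHeat_t\omega|\leq\sfC_t\|\omega\|_{\Ell^\infty}$ comes first, stochastic completeness follows via first-order cutoffs, and only then is the global Bismut--Elworthy--Li formula established by showing (again via BDG with $q=1$) that the Driver--Thalmaier local martingale is a true martingale. Your localization via Dirichlet semigroups and exit times is a legitimate alternative philosophy, but as written it does not supply the gradient-to-gradient control required for step~(i).
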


Before further commenting on Theorem \ref{Th:First theorem} and its proof, in order to make more refined statements, we introduce the following definition.

\begin{definition}\label{Def:Dynkin and Kato} \begin{enumerate}[leftmargin=1.25cm,label=\textnormal{(\roman*)}]
		\item The \emph{Kato class} $\Kato(\mms)$ of $\mms$ is the linear space of all Borel functions $\sfv\colon \mms\to\R$ such that
		\begin{equation*}
		\lim_{t\downarrow 0}\,\sup_{x\in\mms}\int_0^t\bbE\big[\big\vert \sfv(\B_r^x)\big\vert\big]\d r = 0.
		\end{equation*}
		\item A Borel function $\sfv\colon \mms \to \R$ is called \emph{Kato decomposable} if it belongs to $\smash{\Ell_\loc^1(\mms)}$ and $\sfv^-$ belongs to $\Kato(\mms)$.
	\end{enumerate}
\end{definition}

Kato (decomposable) functions have been studied in great detail in the literature in the context of (scalar) Schrödinger operators, see \cite{aizenman1982,berthier,chung, guneysu2017, stollmann1996, sturm1994} and the references therein. The survey \cite{rose2020} provides a concise overview over the use of Kato decomposability in the context of Riemannian manifolds and its connections to semigroup domination. A  detailed study of the Kato class and the induced Schrödinger semigroups corresponding to a large class of Hunt processes can be found in \cite{demuth}. (Unfortunately, the authors of \cite{demuth} assume throughout that the underlying process has the Feller property, and it is not known whether this property holds in the situation of Theorem \ref{Th:First theorem}, or Theorem \ref{Cor:Dynkin} below. This is why at many places we are going to rely on the results from \cite{guneysu2017} instead which are formulated for arbitrary Riemannian manifolds.) In connection with lower Ricci bounds, Kato decomposable functions have been introduced in \cite{guneysu2015} in the context of BV functions. They have been considered further recently in \cite{carron2019,rose2019} in the context of heat kernel, Betti number and eigenvalue estimates, and in \cite{ouhabaz} within the study on $\Ell^p$-properties of heat semigroups on forms. See also \cite{guneysu2019}, which treats some probabilistic and geometric aspects of molecular Schrödinger operators under Kato assumptions. 

For the convenience of the reader, we have collected some important properties of Kato decomposable functions in Appendix \ref{Sec:Kato}. In particular, note that in view of
\begin{equation*}
\bbE\big[\big\vert \sfv(\B_r^x)\big\vert\big] \leq \Vert \sfv\Vert_{\Ell^\infty}\forevery{x\in \mms,\ r\geq 0},
\end{equation*}
it follows that $\Ell^\infty(\mms) \subset\Kato(\mms)$. More generally, in view of an explicit Example \ref{Exa}, we provide the following criterion in Section \ref{Check}, for which we denote by $\Xi\colon\mms\to\R$ the function $\Xi(x):= \meas[\Ball_1(x)]^{-1}$, and by $L^p_{\Xi}(\mms)$ the $L^p$-space w.r.t.~$\Xi\,\meas$. 

\begin{theorem}\label{ayx} Assume that $\dim(\mms)\geq 2$, that $\mms$ is quasi-isometric to a complete Riemannian manifold whose Ricci curvature is bounded from below by a constant, and that $k^-\in L^p_{\Xi}(\mms)+L^{\infty}(\mms)$ for some $p\in (\dim(\mms)/2,\infty)$. Then $k$ is Kato decomposable.
\end{theorem}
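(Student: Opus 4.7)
The plan is to reduce the statement to a clean heat-kernel computation in the weighted $L^p$-space. First, since $k$ is continuous it is automatically locally integrable, so only $k^{-}\in\Kato(\mms)$ remains to be shown. As $\Kato(\mms)$ is a vector space containing $L^{\infty}(\mms)$ (noted in the text right after Definition \ref{Def:Dynkin and Kato}), the decomposition $k^{-}=v_1+v_2$ with $v_1\in L^p_{\Xi}(\mms)$ and $v_2\in L^{\infty}(\mms)$ reduces the problem to establishing the inclusion $L^p_{\Xi}(\mms)\subset \Kato(\mms)$ for $p>n/2$, where $n:=\dim(\mms)$.

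The crucial ingredient will be a uniform Gaussian upper bound on the transition density $p_r$ of $\B^{\cdot}$. The quasi-isometry hypothesis, combined with the constant Ricci lower bound on the model manifold, transfers via the quasi-isometric invariance of the parabolic Harnack inequality (Grigor'yan--Saloff-Coste; see the discussion in \cite{guneysu2017,rose2020}) to uniform volume doubling on $\mms$ together with a uniform Gaussian upper bound on $p_r$. Together with the doubling-based lower estimate $\meas[\Ball_{\sqrt{r}}(y)]\geq c\,r^{n/2}/\Xi(y)$ and the symmetry $p_r(x,y)=p_r(y,x)$, this yields
\begin{equation*}
p_r(x,y)\leq C\,r^{-n/2}\,\Xi(y)\,\exp\bigl(-c\,\met(x,y)^2/r\bigr)\quad\text{for }0<r\leq 1,
\end{equation*}
with constants $C,c>0$ independent of $x,y,r$.

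Given $f\in L^p_{\Xi}(\mms)$, Jensen's inequality applied to the sub-probability measure $p_r(x,\cdot)\,\d\meas$ (using $\int_{\mms} p_r(x,y)\,\d\meas(y)\leq 1$) yields
\begin{equation*}
\bbE\bigl[|f(\B_r^x)|\bigr]\leq \int_{\mms} p_r(x,y)\,|f(y)|\,\d\meas(y)\leq \Bigl(\int_{\mms} p_r(x,y)\,|f(y)|^p\,\d\meas(y)\Bigr)^{1/p}.
\end{equation*}
Plugging in the heat-kernel bound and discarding the Gaussian factor gives the uniform estimate
\begin{equation*}
\int_{\mms} p_r(x,y)\,|f(y)|^p\,\d\meas(y)\leq C\,r^{-n/2}\int_{\mms}\Xi(y)\,|f(y)|^p\,\d\meas(y)=C\,r^{-n/2}\,\Vert f\Vert_{L^p_{\Xi}}^p,
\end{equation*}
and thus $\sup_{x}\bbE[|f(\B_r^x)|]\leq C\,\Vert f\Vert_{L^p_{\Xi}}\,r^{-n/(2p)}$. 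Since $p>n/2$, the integral $\int_0^t r^{-n/(2p)}\,\d r=C'\,t^{1-n/(2p)}$ tends to $0$ as $t\downarrow 0$, which is exactly the Kato condition for $f$.

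The main obstacle is the first step, namely producing the uniform Gaussian heat-kernel upper bound with the correct dependence on the weight $\Xi$ on $\mms$ from the quasi-isometry hypothesis; this rests on the quasi-isometric invariance of the parabolic Harnack inequality (or equivalently of volume doubling together with the scale-invariant Poincaré inequality), which is classical but delicate and for our purposes is conveniently packaged in \cite{guneysu2017}. Everything else is a clean Jensen/Hölder estimate exploiting that $p>n/2$ makes the singular time integral convergent.
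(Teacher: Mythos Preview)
Your argument is correct and follows essentially the same route as the paper. Both proofs establish the same heat-kernel upper bound $\sfp_r(x,y)\leq C\,r^{-n/2}\,\Xi(y)$ for $r\in(0,1]$ by invoking the quasi-isometric stability of Li--Yau/parabolic Harnack together with the local doubling exponent $n$ coming from Bishop--Gromov on the reference manifold; the paper then cites \cite[Proposition~VI.10]{guneysu2017} for the inclusion $L^p_{\Xi}(\mms)+L^{\infty}(\mms)\subset\Kato(\mms)$, whereas you reprove that proposition directly via the Jensen/H\"older estimate and the time integral $\int_0^t r^{-n/(2p)}\,\rmd r$. The only cosmetic difference is that the paper writes the on-diagonal bound with $\Xi(x)$ (condition~\eqref{Ugh}) rather than $\Xi(y)$, but these are equivalent by the symmetry $\sfp_r(x,y)=\sfp_r(y,x)$, and your version with $\Xi(y)$ is indeed the one needed for the Jensen step.
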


One key feature for us about functions $\sfv\in \Kato(\mms)$ is that they always satisfy 
$$
\sup_{x\in\mms} \bbE\Big[\rme^{\int_0^t \sfv(\B_r^x)/2 \d r}\,\One_{\{t < \zeta^x\}}\Big] <\infty\quad\text{locally uniformly in $t\in [0,\infty)$. }
$$
This is known as \emph{Khasminskii's lemma}, see Lemma \ref{Le:Khasminskii}. In particular, since $\Kato(\mms)$ is a linear space, we have the following link of Kato decomposability to \eqref{Eq:Exp integr assumption}.

\begin{lemma}\label{Le:Khas} Assume that $k$ is a Kato decomposable function. Then for every $q\in [1,\infty)$, the exponential integrability \eqref{Eq:Exp integr assumption} holds with $k$ replaced by $qk$.
\end{lemma}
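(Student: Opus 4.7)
The plan is to deduce the claim directly from Khasminskii's lemma (Lemma \ref{Le:Khasminskii}, as quoted in the displayed bound above Lemma \ref{Le:Khas}), using only the linearity of the Kato class $\Kato(\mms)$ as input. Since $q\in [1,\infty)$ is nonnegative, we first make the cosmetic reduction $(qk)^- = q\,k^-$, so that the condition \eqref{Eq:Exp integr assumption} for $qk$ is literally
\begin{equation*}
\sup_{x\in\mms} \bbE\Big[\rme^{\int_0^t q\,k^-(\B_r^x)/2\d r}\,\One_{\{t<\zeta^x\}}\Big] < \infty \quad\text{for every } t>0.
\end{equation*}

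Next, I would verify that $q\,k^-$ lies in $\Kato(\mms)$. By the assumption that $k$ is Kato decomposable, $k^-\in\Kato(\mms)$ by Definition \ref{Def:Dynkin and Kato}(ii). Using Definition \ref{Def:Dynkin and Kato}(i) one has
\begin{equation*}
\lim_{t\downarrow 0}\,\sup_{x\in\mms}\int_0^t\bbE\big[\big\vert q\,k^-(\B_r^x)\big\vert\big]\d r = q\,\lim_{t\downarrow 0}\,\sup_{x\in\mms}\int_0^t\bbE\big[\big\vert k^-(\B_r^x)\big\vert\big]\d r = 0,
\end{equation*}
so $q\,k^-\in\Kato(\mms)$; this is an instance of the fact, already noted in the text, that $\Kato(\mms)$ is a linear space. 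The final step is to apply Khasminskii's lemma to $\sfv := q\,k^-\in\Kato(\mms)$, which gives exactly the finite supremum displayed above, locally uniformly in $t\geq 0$ and in particular for every fixed $t>0$. This is precisely \eqref{Eq:Exp integr assumption} for $qk$.

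There is essentially no genuine obstacle: the entire argument is a one-line invocation of linearity of $\Kato(\mms)$ together with the deferred Khasminskii lemma. The only place where care is needed is to observe that the factor $1/2$ appearing in the exponent of Khasminskii's lemma is harmless, because applying the lemma to $\sfv = q\,k^-$ automatically produces the exponent $\int_0^t q\,k^-(\B_r^x)/2\d r$ demanded by \eqref{Eq:Exp integr assumption}, and the value of the prefactor $q\in[1,\infty)$ plays no role beyond ensuring that $q\,k^-\in\Kato(\mms)$, which holds for all real $q$.
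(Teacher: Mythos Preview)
Your proposal is correct and follows exactly the approach the paper intends: the lemma is stated immediately after the text noting that Khasminskii's lemma applies to any $\sfv\in\Kato(\mms)$ and that $\Kato(\mms)$ is linear, so the paper's own ``proof'' is precisely the one-line observation you spell out, namely $(qk)^-=q\,k^-\in\Kato(\mms)$ and then Lemma~\ref{Le:Khasminskii}.
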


This is ultimately the key behind the following result which states that in this case, Bismut--Elworthy--Li's derivative formula holds on an $L^p$-scale. 

\begin{theorem}\label{Cor:Dynkin} Assume $k\colon \mms\to\R$ is a continuous Kato decomposable function satisfying $\Ric\geq k$ on $\mms$. Then \eqref{Eq:Exp integr assumption} is satisfied, and moreover, Bismut--Elworthy--Li's derivative formula from Theorem \ref{Th:First theorem} holds for every $p\in (1,\infty]$ and every $f\in L^p(\mms)$.
\end{theorem}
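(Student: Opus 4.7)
The theorem has two assertions. First, since $k$ is assumed Kato decomposable one has $k^- \in \Kato(\mms)$, so Lemma~\ref{Le:Khas} with $q = 1$ yields \eqref{Eq:Exp integr assumption} at once; in particular the full conclusion of Theorem~\ref{Th:First theorem} becomes available. The case $p = \infty$ of the Bismut--Elworthy--Li identity is then precisely part~(ii) of that theorem, so the remaining task is to extend the formula to $f \in L^p(\mms)$ for $p \in (1, \infty)$. My plan is a density argument based on truncations.

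Fix $x \in \mms$, $\xi \in T_x\mms$, $t > 0$, write $p' := p/(p-1)$, and set $M := \int_0^t \langle \Q_s^x\xi,\rmd\W_s^x\rangle$. For $f \in L^p(\mms)$ I would consider the truncations $f_n := (f \wedge n) \vee (-n) \in L^\infty(\mms) \cap L^p(\mms)$, which satisfy $f_n \to f$ in $L^p$ by dominated convergence. Theorem~\ref{Th:First theorem}(ii) applied to each $f_n$ gives
\begin{equation*}
\big\langle \nabla \ChHeat_t f_n(x), \xi\big\rangle = \frac{1}{t}\,\bbE\big[f_n(\B_t^x)\,M\big],
\end{equation*}
and I would pass to the limit on both sides with the help of two a priori estimates.

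On the probabilistic side, Hölder's inequality bounds
\begin{equation*}
\big|\bbE\big[(f - f_n)(\B_t^x)\,M\big]\big| \leq \big(\ChHeat_t|f - f_n|^p(x)\big)^{1/p}\,\bbE\big[|M|^{p'}\big]^{1/p'}.
\end{equation*}
The moment $\bbE[|M|^{p'}]$ would be controlled by Burkholder--Davis--Gundy together with the pointwise operator-norm bound $|\Q_s^x|_{\mathrm{op}} \leq \exp\big(\tfrac{1}{2}\int_0^s k^-(\B_r^x)\,\rmd r\big)$, derived from \eqref{Eq:Q ODE} by applying Gronwall's lemma to $s \mapsto \langle \Q_s^x(\Q_s^x)^*\eta, \eta\rangle$ and using the pointwise symmetry of $\Ric$ together with $\Ric \geq k$; the resulting exponential moment is then bounded uniformly in $x$ by Khasminskii's lemma applied to $(p'/2)\,k^-$. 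The scalar factor $\ChHeat_t|f - f_n|^p(x)$ tends to $0$ because $\|f - f_n\|_{L^p}^p \to 0$ and because $\ChHeat_t \colon L^1(\mms) \to L^\infty(\mms)$ is ultracontractive under the Kato hypothesis on $k^-$ (a standard consequence of the Schrödinger-semigroup theory discussed in \cite{guneysu2017, rose2020}).

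On the analytic side, I would factor $\ChHeat_t = \ChHeat_{t/2} \circ \ChHeat_{t/2}$: the same ultracontractivity yields $\|\ChHeat_{t/2}(f - f_n)\|_{L^\infty} \to 0$, and then Theorem~\ref{Th:First theorem}(iii) applied to the bounded function $\ChHeat_{t/2}(f - f_n)$ forces $\Lip(\ChHeat_t(f - f_n)) \to 0$; by smoothness of $\ChHeat_t g$ (parabolic regularity) this entails $|\nabla \ChHeat_t(f - f_n)(x)| \to 0$. Combined with the previous estimate, this delivers the identity for all $f \in L^p(\mms)$. The one genuinely delicate input is the $L^1 \to L^\infty$ ultracontractivity of the heat semigroup under the sole Kato condition on $k^-$: that is the main obstacle in principle, but it is well-documented in the references cited in the introduction.
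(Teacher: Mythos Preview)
Your reduction to the case $p=\infty$, your use of Khasminskii's lemma for the higher exponential moments, and your Burkholder--Davis--Gundy plus Gronwall bound on $\bbE[|M|^{p'}]$ are all correct and match what the paper does. The gap is exactly where you flag it yourself: the claimed $L^1\!\to\! L^\infty$ ultracontractivity of $(\ChHeat_t)_{t\geq 0}$ is \emph{not} a consequence of the hypotheses, and it is not what the cited references provide. Under a Kato-type lower Ricci bound the known heat-kernel estimates (Carron, Rose, Rose--Stollmann) are of Li--Yau form $\sfp_t(x,y)\leq C\,\meas[\Ball_{\sqrt t}(x)]^{-1}$, which is \emph{local} in $x$; a uniform bound $\sup_{x,y}\sfp_t(x,y)<\infty$ would additionally require a non-collapsing condition $\inf_x\meas[\Ball_r(x)]>0$ that nothing here guarantees. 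Already for $k\equiv 0$ (trivially Kato decomposable) one can take a complete surface with nonnegative curvature whose unit balls have volume tending to zero at infinity, and then $\sfp_t(x,x)\asymp \meas[\Ball_{\sqrt t}(x)]^{-1}$ is unbounded, so $\ChHeat_t$ is not ultracontractive. Both of your limit arguments --- the pointwise decay of $\ChHeat_t|f-f_n|^p(x)$ and, more seriously, the uniform decay of $\Vert\ChHeat_{t/2}(f-f_n)\Vert_{L^\infty}$ --- rest on this unavailable ingredient.

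The paper sidesteps the issue by never asking for pointwise control from an $L^p$ hypothesis. Instead it packages the right-hand side as a linear operator $f\mapsto\sfE_t^V f(x):=\bbE\big[f(\B_t^x)\int_0^t\langle\Q_s^x V(x),\rmd\W_s^x\rangle\big]$ (Lemma~\ref{Le:Linear operator}) and shows directly that $\sfE_t^V\colon L^p\to L^p$ is bounded: H\"older gives $|\sfE_t^V f(x)|^p\leq \ChHeat_t(|f|^p)(x)\cdot\bbE[|M|^{p'}]^{p/p'}$, and then one \emph{integrates} over $x$ and uses only the $L^1$-contraction (mass preservation) of $\ChHeat_t$ to get $\int \ChHeat_t(|f|^p)\,\rmd\meas\leq\Vert f\Vert_{L^p}^p$. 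No on-diagonal heat-kernel bound enters. The left-hand side is handled analogously via the divergence theorem. Your BDG/Khasminskii estimate is precisely the second factor in this inequality; the missing idea is to integrate in $x$ rather than seek a uniform pointwise bound.
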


The proof of (i) in Theorem \ref{Th:First theorem} can be found in Section \ref{Sec:Stochastic completeness}, while (ii) and (iii) as well as Theorem \ref{Cor:Dynkin} are studied in Section \ref{Sec:Bismut}. 

Let us collect some bibliographical comments on Theorem \ref{Th:First theorem} and Theorem \ref{Cor:Dynkin}.

In the framework of uniform bounds from below on the Ricci curvature, (i) in Theorem \ref{Th:First theorem} is due to \cite{yau}. On weighted Riemannian manifolds -- on which the Ricci tensor is always replaced by the corresponding Bakry--Émery Ricci tensor, see Section \ref{Extensions} -- the non-explosion for the induced diffusion processes under uniform lower Ricci bounds has been obtained by \cite{bakry1986}. In connection with \eqref{Eq:Exp integr assumption}, also for weighted Riemannian manifolds, the latter result has been extended by \cite{li1994} using an approach via stochastic and Hessian flows. In fact, the corresponding condition at page 423 of \cite{li1994} is implied by our condition \eqref{Eq:Exp integr assumption}. Once we have established all necessary intermediate results, our proof then closely follows the lines in \cite{bakry1986} (which is also worked towards in \cite{li1994}). For different, more geometric non-explosion criteria in terms of distance functions, see \cite{wang2014} and the references therein. A nonsmooth result similar to (i) -- however assuming a Kato- or rather a Dynkin-type \cite{stollmann1996, sturm1994} lower bound instead of only \eqref{Eq:Exp integr assumption} -- has recently been treated in \cite{erbar2020}.

Formula (ii) in Theorem \ref{Th:First theorem} has first appeared in \cite{bismut1984} in the compact case. In the noncompact case, this result, as well as Theorem \ref{Cor:Dynkin}, have been proven in \cite{EL2,EL} under more general assumptions than \eqref{Eq:Exp integr assumption} using the slightly different technique of stochastic derivative flows. We also refer to \cite{driver2001} for similar treatises for heat semigroups over vector bundles, and also \cite{hsu2002, wang2014} for similar results under more geometric conditions on the lower bound of $\Ric$. Remarkably, localized versions of the Bismut--Elworthy--Li derivative formula hold without any assumptions on the geometry of the manifold, see e.g.~\cite{T1,T3,T2}. 

The $\Ell^\infty$-$\Lip$-regularization (iii) from Theorem \ref{Th:First theorem} is a corollary of (ii), thus indicating the importance of the latter in studying further regularity properties of $(\ChHeat_t)_{t\geq 0}$. In fact, \emph{local} versions of (iii) are already known even without the assumption \eqref{Eq:Exp integr assumption} on $k$ \cite{T3, wang2014}, with slightly different estimates on $\Lip(\ChHeat_t f)$ involving locally uniform lower bounds on $\Ric$. (The proof uses the above mentioned local derivative formula.) Outside the smooth scope, a similar property as (iii) is known on $\RCD(K,\infty)$ spaces \cite{ags}. This setting allows for more flexibility in the variety of spaces (metric measure spaces), but is still restricted to uniform lower Ricci bounds, formulated in a synthetic sense \cite{lott2009, sturm2006}.

\subsection{Characterizations of variable lower Ricci bounds}\label{Sub:Char}

We now come to our second main result, i.e.~several equivalent characterizations of lower Ricci bounds, which we shortly introduce. 

The closest characterization of $\Ric\geq k$ on $\mms$ is the \emph{$\Ell^1$-Bochner inequality} which is related to the Ricci curvature of $\mms$ by the following well-known Bochner formula: given any open $U\subset\mms$, for every $f\in \Cont^\infty(U)$ we have
\begin{equation}\label{Eq:Bochner formula}
\Delta\frac{\vert\nabla f\vert^2}{2} = \langle \nabla \Delta f,\nabla f\rangle + \vert\!\Hess f\vert^2 + \Ric(\nabla f,\nabla f)\quad\text{on }U.
\end{equation}

We also derive a one-to-one connection between lower Ricci bounds by $k$ and the existence of certain couplings of Brownian motions on $\mms$. Here, if $\mms$ is stochastically complete, then given $x,y\in\mms$, by a \emph{coupling of Brownian motions starting in $(x,y)$}, we understand an $\mms\times\mms$-valued stochastic process $(\B^x,\B^y)\colon [0,\infty)\times\Omega\to\mms\times\mms$ on some filtered probability space $(\Omega,\scrF_*,\bbP)$ such that $\B^x$ and $\B^y$ are Brownian motions on $\mms$ starting in $x$ and $y$, respectively. To formulate an appropriate pathwise coupling estimate, we denote by $\Geo(\mms)$ the set of minimizing geodesics $\gamma \colon [0,1]\to\mms$, and define the lower semicontinuous function $\kk\colon \mms\times\mms\to\R$ by
\begin{equation}\label{Eq:kk definition}
\kk(u,v) :=  \inf\!\bigg\lbrace\! \int_0^1 k(\gamma_r) \d r: \gamma\in \Geo(\mms),\ \gamma_0=u,\ \gamma_1=v\bigg\rbrace.
\end{equation}
Observe that $k$ can be recovered from the diagonal values of $\kk$, i.e.~$k(u)= \kk(u,u)$ for every $u\in\mms$. The key feature about $\kk$ is that it provides a way to avoid cut-loci.

\begin{theorem}\label{Th:Equivalences} Let $k\colon \mms\to\R$ be a continuous function satisfying \eqref{Eq:Exp integr assumption}. Then the following conditions are equivalent:
\begin{enumerate}[leftmargin=1.25cm,label=\textnormal{(\roman*)}]
\item we have $\Ric \geq k$ on $\mms$,
\item the \emph{$\Ell^1$-Bochner inequality w.r.t.~$k$} is satisfied, i.e.~for every $f\in C_\comp^\infty(\mms)$,
\begin{align}\label{Eq:1-Bochner}
\Delta\vert\nabla f\vert - \vert\nabla f\vert^{-1}\,\langle \nabla\Delta f,\nabla f\rangle \geq k\,\vert\nabla f\vert\quad \text{on }\{\vert\nabla f\vert\neq 0\},
\end{align}
\item we have the \emph{pathwise coupling property w.r.t.~$k$}, i.e.~$\mms$ is stochastically complete and for every $x,y\in\mms$, there exists a coupling $(\B^x,\B^y)$ of Brownian motions on $\mms$ starting in $(x,y)$ such that a.s., we have
\begin{equation*}
\met\big(\B_t^x,\B_t^y\big) \leq \rme^{-\int_s^t \kk\left(\B_r^x,\B_r^y\right)/2\d r}\met\big(\B_s^x,\B_s^y\big)\forevery{s,t\geq 0}\text{ with }s\leq t.
\end{equation*}
\end{enumerate}
\end{theorem}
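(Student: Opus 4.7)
The plan is to prove the three-way equivalence in two pieces: (i) $\Leftrightarrow$ (ii) by local analysis of the Bochner identity, and (i) $\Leftrightarrow$ (iii) by a parallel-coupling construction paired with an infinitesimal differentiation argument. For (i) $\Rightarrow$ (ii) I would fix $f \in C_\comp^\infty(\mms)$ and restrict to the open set $\{\vert\nabla f\vert \neq 0\}$, where $\vert\nabla f\vert$ is smooth. The product-rule identity $\vert\nabla f\vert\,\Delta\vert\nabla f\vert = \tfrac{1}{2}\,\Delta\vert\nabla f\vert^2 - \vert\nabla\vert\nabla f\vert\vert^2$, combined with \eqref{Eq:Bochner formula} and the refined Kato inequality $\vert\nabla\vert\nabla f\vert\vert^2 \leq \vert\!\Hess f\vert^2$, yields $\vert\nabla f\vert\,\Delta\vert\nabla f\vert \geq \langle\nabla\Delta f,\nabla f\rangle + \Ric(\nabla f,\nabla f)$, which together with $\Ric \geq k$ produces \eqref{Eq:1-Bochner}. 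Conversely, for (ii) $\Rightarrow$ (i), given $x_0\in\mms$ and a unit $\xi\in T_{x_0}\mms$, I would construct $f\in C_\comp^\infty(\mms)$ with $\nabla f(x_0)=\xi$ and $\Hess f(x_0)=0$ (e.g.~by cutting off a linear function in normal coordinates at $x_0$); at $x_0$, Kato's inequality is saturated with both sides vanishing, so \eqref{Eq:1-Bochner} collapses to the pointwise estimate $\Ric(\xi,\xi)(x_0) \geq k(x_0)$.

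For (i) $\Rightarrow$ (iii), Theorem \ref{Th:First theorem}(i) already supplies the required stochastic completeness. For the pathwise contraction, I would build $(\B^x,\B^y)$ by \emph{coupling by parallel transport} \`a la Kendall--Cranston: develop $\B^x$ from a Euclidean Brownian motion $\W^x$ on $T_x\mms$, and drive $\B^y$ by the image of $\W^x$ under the composition of stochastic parallel transport along $\B^x$ with the deterministic parallel transport along the minimizing geodesic $\gamma_t$ from $\B^x_t$ to $\B^y_t$. Away from the cut locus, It\^{o}'s formula applied to the smooth function $\met$, together with the second variation of arc length, gives
\begin{equation*}
\rmd\,\met(\B^x_t,\B^y_t) \leq -\frac{1}{2\,\met(\B^x_t,\B^y_t)}\int_0^1 \Ric(\dot\gamma_t,\dot\gamma_t)(\gamma_t(r))\,\d r\cdot\d t,
\end{equation*}
the martingale part vanishing because the coupling synchronizes the noises in the radial direction. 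Using $\Ric \geq k$ and the definition \eqref{Eq:kk definition} of $\kk$, the right-hand side is dominated by $-\tfrac{1}{2}\,\kk(\B^x_t,\B^y_t)\,\met(\B^x_t,\B^y_t)\,\d t$; Gronwall's lemma then produces the claimed exponential pathwise bound. At cut-locus points the distance remains a semimartingale and the local-time correction enters with the favorable sign, in line with the classical Kendall--Cranston argument.

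For (iii) $\Rightarrow$ (i), fix $x_0\in\mms$ and a unit $\xi\in T_{x_0}\mms$, set $y_\epsilon := \exp_{x_0}(\epsilon\xi)$, and take expectations of the coupling estimate against a Lipschitz test function $f$. Dividing by $\epsilon = \met(x_0,y_\epsilon)$ and letting $\epsilon\downarrow 0$ -- justified by the exponential integrability \eqref{Eq:Exp integr assumption} and the relation $\kk(u,u)=k(u)$ together with the lower semicontinuity of $\kk$ -- yields the directional gradient estimate
\begin{equation*}
\big\vert\big\langle\nabla\ChHeat_tf(x_0),\xi\big\rangle\big\vert \leq \bbE\Big[\rme^{-\int_0^t k(\B^{x_0}_r)/2\d r}\Big]\,\Lip(f).
\end{equation*}
Applying this bound to a dense family of test functions and differentiating at $t=0$ reproduces the Bochner inequality (ii), and hence (i) by the first paragraph.

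The principal obstacle is the coupling construction in (i) $\Rightarrow$ (iii), specifically its behaviour at cut-locus points and the verification that the noise terms cancel precisely. The infimum defining $\kk$ in \eqref{Eq:kk definition} is designed exactly to keep the pathwise estimate sharp when several minimizing geodesics compete -- a subtlety that is invisible for uniform lower Ricci bounds but central here.
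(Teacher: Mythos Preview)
Your treatment of (i) $\Leftrightarrow$ (ii) and of (i) $\Rightarrow$ (iii) is essentially the paper's: the signed-distance / normal-coordinate test function for (ii) $\Rightarrow$ (i), and the Kendall--Cranston parallel coupling plus the index-form comparison for (i) $\Rightarrow$ (iii). One minor point you gloss over in the latter: $\kk$ is only lower semicontinuous, so the function $(u,v)\mapsto -\met(u,v)\,\kk(u,v)$ is not a legitimate continuous upper bound for the index form; the paper handles this by approximating $\kk$ from below by continuous $\kk_n$, applying the coupling differential inequality with each $\kk_n$, and then passing to the limit monotonically.

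The real gap is in your (iii) $\Rightarrow$ (i). The gradient bound you extract,
\[
\big\vert\big\langle\nabla\ChHeat_tf(x_0),\xi\big\rangle\big\vert \leq \bbE\Big[\rme^{-\int_0^t k(\B^{x_0}_r)/2\d r}\Big]\,\Lip(f),
\]
cannot be differentiated at $t=0$ to yield \eqref{Eq:1-Bochner}: at $t=0$ the two sides are $\vert\nabla f(x_0)\vert$ and $\Lip(f)$, which are generically unequal, so the inequality between their $t$-derivatives carries no information. Even if you restrict to $f$ with $\vert\nabla f\vert$ maximised at $x_0$, you only recover \eqref{Eq:1-Bochner} at that single maximum point, with no control over the value of $\Delta\vert\nabla f\vert$ there. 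What one actually needs is the \emph{pointwise} $\Ell^1$-gradient estimate with $\vert\nabla f\vert(\B_t^{x_0})$ \emph{inside} the expectation (the paper's Theorem~\ref{Th:Sc}), and extracting that from the coupling requires knowing not just that $\met(\B_t^{x_0},\B_t^{y_\epsilon})\to 0$ but also that the difference quotient $\vert f(\B_t^{x_0})-f(\B_t^{y_\epsilon})\vert/\met(\B_t^{x_0},\B_t^{y_\epsilon})$ localises to $\vert\nabla f\vert(\B_t^{x_0})$. This is exactly what the paper's proof of ``(iii) $\Rightarrow$ (ii)'' accomplishes: it runs a joint limit along a curve $s\mapsto(t_s,\gamma_s)$ with $t_s=-c/\log s^2$, decomposes the sample space according to exit times from small balls and according to whether $\met(\B_{t_s}^{x},\B_{t_s}^{\gamma_s})$ and its time-average stay below $s^{1/2}$, and uses the exit-time bound of Lemma~\ref{Le:Hsu lemma} together with a Lipschitz approximation $\lll$ of $\kk$ (Lemma~\ref{Le:Approximation lemma}) to show that only the ``good'' event survives, on which the difference quotient is controlled by $\vert\nabla f\vert(\B_{t_s}^x)+O(s^{1/2})$. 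Your sketch skips this entire mechanism, and without it the implication does not close.
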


%Here and in the sequel, the Markov property for every respective process under consideration is understood w.r.t.~its canonically given filtration. The statement of Theorem \ref{Th:Equivalences} is still true if one does not require the Markov property of $(\B^x,\B^y)$.

\begin{remark}\label{Re:Ext of cpl} Thanks to the local, respectively pathwise, nature of the statements, ``(iii) $\Longrightarrow$ (ii)'' and ``(ii) $\Longleftrightarrow$ (i)'' are even true without \eqref{Eq:Exp integr assumption}. Moreover, under the \emph{a priori} assumption of stochastic completeness, ``(i) $\Longrightarrow$ (iii)'' is satisfied without \eqref{Eq:Exp integr assumption}. For a slightly more general version of ``(iii) $\Longrightarrow$ (ii)'', see Remark \ref{Re:Ru} below.
\end{remark}

We prove ``(ii) $\Longrightarrow$ (i)'' in Section \ref{Sec:3}, ``(i) $\Longrightarrow$ (iii)'' in Section \ref{Sec:(i) => (iii)} and ``(iii) $\Longrightarrow$ (ii)'' in Section \ref{Sec:(iii) => (ii)}. For Kato decomposable functions $k$, another equivalent characterization of $\Ric \geq k$ on $\mms$ in terms of the \emph{$\Ell^1$-gradient estimate} is discussed in Section \ref{Sec:2}. 

Again, some bibliographical comments are in order.

In the abstract framework of \cite{erbar2020}, the equivalence ``(i) $\Longleftrightarrow$ (ii)'' --  with (ii) in a weak formulation -- together with their equivalence to (a nonsmooth version of) the $\Ell^1$-gradient estimate from Theorem \ref{Th:Sc} has been shown independently. 

The pathwise estimate appearing in (iii), as well as the equivalence of (iii) to lower Ricci bounds, extends similar results from \cite{braun2019}, %(the Markov property of the constructed coupling has not been established therein), 
where analogous equivalences have been established in the synthetic framework of (infinitesimally Hilbertian) $\CD(k,\infty)$ spaces with lower semicontinuous, lower bounded variable Ricci bounds $k\colon\mms\to\R$ (see also \cite{sturm2015}). Even for the smooth case, the stated pathwise inequality involving the function $\kk$ has been firstly introduced in \cite{braun2019}. (Although it is quite straightforward to detect the place where $\kk$ enters from the construction of the coupling, see Section \ref{Sec:(i) => (iii)}, the function $\kk$ was seemingly never mentioned explicitly in the literature before \cite{braun2019}.) In the Riemannian case, Theorem \ref{Th:Equivalences} establishes a similar result in full generality without any lower boundedness assumption on $k$.  We point out that, in contrast to \cite{braun2019}, the coupling technique on manifolds does not require any notion of ``Wasserstein contractivity'' for the dual heat flow to $(\ChHeat_t)_{t\geq 0}$ on the space of Borel probability measures on $\mms$. It is rather provided in a direct way by the method of \emph{coupling by parallel displacement} \cite{cranston1991, kendall1986}, see \cite{arnaudon2011} for a treatise in the case of constant $k$. Let us also point out \cite{veysseire2011}, which claims the existence of a coupling $(\B^x,\B^y)$ of Brownian motions, possibly with drift, such that for every $t>0$, even
\begin{equation*}
\met\big(\B_t^x,\B_t^y\big) = \rme^{-\int_0^t \kappa\left(\B_r^x,\B_r^y\right)/2\d r}\,\met(x,y)
\end{equation*}
holds on the event that $\smash{\B_r^x}$ and $\smash{\B_r^y}$ do not belong to each other's cut-locus for every $r\in [0,t]$. The real-valued function $\kappa$, the so-called ``coarse curvature'' of $\mms$, is defined outside the diagonal of $\mms\times\mms$ and is slightly larger than $\kk$.

\subsection{Extensions to possible other settings}\label{Extensions}

Apart from well-known geometric and topological applications \cite{bismut1986, bueler, li1994}, recent results for molecular Schrödinger operators \cite{guneysu2019} suggest a detailed study of \emph{weighted} Riemannian manifolds having Kato-type lower bounds on their Bakry--Émery Ricci tensor. In this context, we note that Theorem \ref{Th:First theorem}, Theorem \ref{Cor:Dynkin} and Theorem \ref{Th:Equivalences}  remain valid if for some $\Phi\in C^2(\mms)$, we replace 
\begin{itemize}[leftmargin=1.25cm]
\item $\meas$ by the weighted measure $\rme^{-2\Phi}\,\meas$,
\item $\Delta$ by the drift Laplacian $\Delta - 2\,\langle \nabla \Phi, \nabla\cdot\rangle$,
\item $\Ric$ by the Bakry--Émery Ricci tensor $\Ric + 2\Hess\Phi$,
\item $(\ChHeat_t)_{t\geq 0}$ by the semigroup generated by $\Delta/2 - \langle\nabla \Phi,\nabla\cdot\rangle$, noting that the latter is again essentially self-adjoint  \cite[Section 2.1]{li1992} on $\smash{\Ell^2_{\rme^{-2\Phi}}(\mms)}$, the $\Ell^2$-space w.r.t.~$\rme^{-2\Phi}\,\meas$,
\item $X^x$ by the diffusion generated by the operator $\Delta/2 - \langle \nabla \Phi,\nabla\cdot\rangle$, see e.g.~\cite[Chapter 3]{wang2014} for the particular form of the corresponding stochastic differential equation and the construction of its solution, and
\item $I$ by the weighted index form stated in Remark \ref{Re:index}.
\end{itemize}
Other appropriate changes compared to the non-weighted setting, if needed, will always be indicated in the sequel.

It would also be interesting to study Theorem \ref{Th:First theorem}, Theorem \ref{Cor:Dynkin} and Theorem \ref{Th:Equivalences} in the context of lower bounds on the Bakry--Émery Ricci curvature $\Ric_Z := \Ric + 2\,\nabla Z$ which is associated to a $C^1$-vector field $Z$ on $\mms$ not necessarily of gradient-type.  
See \cite{wang2005,wang2014} and the references therein for a summary of similar statements under different, more geometric conditions. Given appropriate interpretations of the involved analytic objects, see \cite{wang2005,wang2014} for details, some of the results immediately carry over with trivial modifications (for instance, the chain ``(iii) $\Longrightarrow$ (ii) $\Longleftrightarrow$ (i)'' in Theorem \ref{Th:Equivalences}). On the other hand, many of our arguments, e.g.~Theorem \ref{Th:Form FK} below and thus (i) in Theorem \ref{Th:First theorem}, or Theorem \ref{Th:Sc}, are implicitly based on self-adjointness of the semigroup $(\ChHeat_t)_{t\geq 0}$ and the heat flow on $1$-forms. The latter properties lack in this generality, which is why we restricted ourselves to gradient vector fields.

Finally, a further possible (but highly nontrivial) direction of investigation is the case of manifolds with boundary, taking the heat flow with Neumann boundary conditions. See \cite{chen2012, wang2014} and the references therein for an account on diffusion processes on these. The key difficulty in this context will be to take into account the local time of the boundary appropriately. Results that are relevant in this context have been obtained in \cite{airault1975,arnaudon17, deuschel,hsu, ikeda1981, meritet, wang2014}. 

\paragraph*{Acknowledgments} The authors' collaboration arose from a discussion at the \emph{Japa\-nese-German Open Conference on Stochastic Analysis} at the University of Fukuoka, Japan, in September 2019. The authors gratefully acknowledge the warm hospitality of this institution.

The authors thank the anonymous reviewers for their helpful comments and corrections which lead to a significant improvement of the paper's quality.

\section{Preliminaries}\label{Ch:Preliminaries}

For more details on the heat flows on functions and on $1$-forms collected in this chapter, we refer the reader to \cite{davies,grigoryan2009, guneysu2017, hsu2002, rosenberg1997, strichartz1983} and the references therein. For details on their connection with the underlying stochastic processes, see \cite{ikeda1981,malliavin1997,wang2014}. Moreover, all objects and results presented here have counterparts in the weighted case outlined in Section \ref{Extensions}: the heat flow on functions \cite{grigoryan2009}, Brownian motion (or rather the corresponding Ornstein--Uhlenbeck process) \cite{ikeda1981,li1992, wang2014}, and the heat flow on $1$-forms \cite{li1992}.

\paragraph*{Heat flow on functions} The operator $\Delta/2$ is the generator of the  symmetric strongly local, regular Dirichlet form $\scrE\colon\Ell^2(\mms)\to [0,\infty]$ given by
\begin{equation*}
\scrE(f) := \frac{1}{2}\int_\mms \vert \rmd f\vert^2\d\meas\quad\text{if }f\in \Sob^{1,2}(\mms),\quad \scrE(f) := \infty\quad\text{otherwise}.
\end{equation*}
Note that under our standing assumption on $\mms$, $C_\comp^\infty(\mms)$ is dense in the Sobolev space $\Sob^{1,2}(\mms)$ w.r.t.~its natural norm \cite{aubin} -- in other words, $\smash{\Sob^{1,2}_0(\mms)=\Sob^{1,2}(\mms)}$.

The \emph{heat semigroup} or \emph{heat flow} $(\ChHeat_t)_{t\geq 0}$ introduced in the beginning of this article is directly linked to $\scrE$ by spectral calculus and is a strongly continuous, positivity preserving contraction semigroup of linear, self-adjoint operators in $L^2(M)$. Powerful $\Ell^2$-$\Ell^\infty$-regularization properties of the heat flow on relatively compact subsets of $\mms$, an exhaustion procedure and bootstrapping of regularity imply the existence of the so-called \emph{minimal heat kernel} $\smash{\sfp \in \Cont^\infty\big((0,\infty)\times\mms\times\mms; (0,\infty)\big)}$ on $\mms$, the smallest positive fundamental solution to the heat operator $\partial/\partial t - \Delta/2$. It has the property that for every $f\in \Ell^2(\mms)$ and $t>0$, (a version of) $\ChHeat_tf$ can be represented by
\begin{equation*}
\ChHeat_tf(x) := \int_\mms \sfp_t(x,y)\,f(y)\d\meas(y)\forevery{x\in\mms}.
\end{equation*}
Actually, $(\ChHeat_t)_{t\geq 0}$ extends to a contraction semigroup of linear operators from $\Ell^p(\mms)$ into $\Ell^p(\mms)$ for every $p\in [1,\infty]$ which is strongly continuous  if $p<\infty$ and weak$^*$ continuous if $p=\infty$. Moreover, the previous representation formula is still valid for every $p\in [1,\infty]$ and $f\in\Ell^p(\mms)$. For such $f$, the above properties of the heat kernel show that $\ChHeat_\cdot f\in \Cont^\infty\big((0,\infty)\times \mms\big)$ solves the heat equation
\begin{equation}\label{Eq:Functional heat equ}
\frac{\partial}{\partial t} \ChHeat_t f =\frac{1}{2} \Delta \ChHeat_t f\quad\text{in }(0,\infty)\times\mms
\end{equation}
in the classical sense. In addition, we have $\ChHeat_\cdot f\in \Cont^\infty\big([0,\infty)\times \mms\big)$ if $f$ is  also smooth, and $\partial^\alpha\ChHeat_tf$ converges pointwise to $\partial^\alpha f$ as $t\downarrow 0$ for every multiindex $\smash{\alpha\in \N_0^{\dim(\mms)}}$.

\paragraph*{Brownian motion} Given a locally compact Polish space $Y$ we denote by $\Cont([0,\infty);Y)$ the space of continuous maps $\gamma\colon [0,\infty)\to Y$, equipped with the topology of locally uniform convergence and the induced Borel $\sigma$-algebra. Let $Y_\partial := Y\cup\{\partial\}$ denote the one-point compactification of $Y$.

Given a point $x\in\mms$, any stochastic process $\B$ with sample paths in $\Cont([0,\infty); \mms_\partial)$ which is defined on a probability space $(\Omega,\scrF,\bbP)$ (i.e.~the map $t\mapsto \B_t(\omega)$ belongs to $\Cont([0,\infty); \mms_\partial)$ for all $\omega\in\Omega$) is termed \emph{Brownian motion on $\mms$ starting in $x$} if its law equals the Wiener measure $\bbP_x$ on $\Cont([0,\infty);\mms_\partial)$ concentrated at paths starting in $x$. (Usually we want to underline the dependency of $\B$ from its starting point $x$, whence we shall often write $\B^x$.) Recall that $\mathbb{P}_x$ is the uniquely determined probability measure on $\Cont([0,\infty);\mms_\partial)$ with $(\eval_0)_\push\bbP_x = \delta_x$ (where $\eval_0(\gamma) := \gamma_0$ is the evaluation map at $0$) and whose transition density is given by the function $\sfq\colon (0,\infty)\times \mms_\partial\times\mms_\partial \to [0,\infty)$ defined by setting, for every $y,y'\in\mms$, 
\begin{equation*}
\sfq_t(y,y') = \sfp_t(y,y'),\quad \sfq_t(\partial,y') := 0,\quad \sfq_t(\partial,\partial) := 1,\quad \sfq_t(y,\partial) := 1-\int_\mms \sfp_t(y,z)\d\meas(z).
\end{equation*}

Now let $\zeta^x := \inf\{t\geq 0 : \B_t^x = \partial\}$ denote the \emph{explosion time} of $\B^x$, with the usual convention that $\inf\emptyset := \infty$. Since the Wiener measure is concentrated on paths having $\partial$ as a trap, for every $p\in [1,\infty]$ and $f\in L^p(\mms)$ one has
\begin{equation}\label{Eq:Formula Pt BM}
\ChHeat_{t}f(x) = \bbE\big[f(\B_t^x)\,\One_{\{t<\zeta^x\}}\big]\forevery{x\in\mms,\ t\geq 0}.
\end{equation} 
Therefore, $\mms$ is stochastically complete if and only if
\begin{equation}\label{Eq:Equivalent stoch compl}
\bbP\big[t<\zeta^x\big] = \ChHeat_t\One_\mms(x) = \int_\mms\sfp_t(x,y)\d\meas(y) = 1\forevery{x\in\mms,\ t> 0}.
\end{equation}

If $(\Omega,\mathscr{F}_*,\mathbb{P})$ is filtered and $\B^x$ adapted to the given filtration, then $\B^x$ is called an \emph{adapted Brownian motion}. In this case, $\B^x$ is a semimartingale on $\mms$ in the sense that for every $f\in \Cont^\infty(\mms)$, the real-valued process $f\circ \B^x$ is a semimartingale up to the explosion time $\zeta^x$. The \emph{stochastic parallel transport} along $\B^x$ w.r.t.~$\nabla$ started in $x\in\mms$ is a process $\smash{\pa^x}$ constructed in terms of the horizontal lift of $X^x$ to the orthonormal frame bundle over $\mms$, making the linear map $\smash{\pa_t^x\colon T_x\mms \to T_{\B_t^x}\mms}$  a.s.~orthogonal for every $t\in [0,\zeta^x)$.

\paragraph*{Heat flow on {\boldmath{$1$}}-forms} In the sequel, Borel equivalence classes of $1$-forms on $\mms$ having a certain regularity $\scrR$ are denoted by $\Gamma_{\scrR}(T^*\mms)$, and similarly $\Gamma_\scrR(T\mms)$ for Borel equivalence classes of vector fields with regularity $\scrR$. For instance, given $p\in[1,\infty]$, we get the Banach space $\Gamma_{\Ell^p}(T^*\mms)$ given by all Borel equivalence classes $\omega$ of sections in $T^*\mms$ such that $\vert\omega\vert\in \Ell^p(\mms)$. Let $\smash{\vec{\Delta} := \rmd^{\dagger}\rmd + \rmd\d^{\dagger}}$ denote the Hodge--de Rham Laplacian. When defined initially on $\Gamma_{\Cont_\comp^\infty}(T^*\mms)$, by geodesic completeness this operator has a unique self-adjoint extension in the Hilbert space $\Gamma_{\Ell^2}(T^*\mms)$, which will be denoted with the same symbol again. Note our sign convention: $\smash{\vec{\Delta}}$ is nonnegative, while $\Delta$ is nonpositive. The heat semigroup $\smash{(\WHeat_t)_{t\geq 0}}$ on $1$-forms given by $\smash{\WHeat_t := \rme^{-t\vec{\Delta}/2}}$ in $\smash{\Gamma_{\Ell^2}(T^*\mms)}$ is smooth, in the sense for every $\smash{\omega\in \Gamma_{\Ell^2}(T^*\mms)}$ one has a jointly smooth representative $\smash{\WHeat_\cdot\omega}$ which solves the heat equation
\begin{equation*}
\frac{\partial}{\partial t}\WHeat_t \omega = -\frac{1}{2}\vec{\Delta}\WHeat_t\omega\quad\text{in }(0,\infty)\times \mms
\end{equation*}
on $1$-forms with initial condition $\omega$ (and in $[0,\infty)\times \mms$ if $\omega$ is also smooth).

On exact forms, $\smash{\WHeat_t}$ can be represented by the heat operator $\ChHeat_t$; more precisely, for every $f\in W^{1,2}(\mms)$ one has \cite{driver2001,li1992}
\begin{equation}\label{Eq:Commutation}
\WHeat_t\rmd f = \rmd\ChHeat_t f\forevery{t\geq 0}.
\end{equation}
If one drops geodesic completeness of $\mms$, such a commutation relation becomes subtle (cf. \cite{thalmaier} for a negative and \cite{guneysu2017} for a positive result in this direction).

Lastly, a key result is the Feynman-Kac formula, for which we recall the process $\Q^x$ from \eqref{Eq:Q ODE}. Compare with Section \ref{Sec:2}. Note that the last asserted estimate in the theorem follows from Gronwall's inequality, cf.~e.g.~\eqref{Eq:Gronwall} below. See also \cite{EL,malliavin1974} for the compact case.

\begin{citetheorem}[{\cite[Theorem B.4]{driver2001}}]\label{Th:Form FK} Suppose that $\Ric\geq k$ on $\mms$ for some continuous $k\colon \mms \to \R$ satisfying \eqref{Eq:Exp integr assumption}. Then for every $t>0$ and every $\omega\in\Gamma_{L^\infty}(T^*\mms)$ with compact support, the \emph{Feynman--Kac formula}
\begin{equation*}
\WHeat_t\omega(x) = \bbE\big[\Q_t^x\,(\Par_t^x)^{-1}\,\omega^\sharp(\B_t^x)\,\One_{\{t < \zeta^x\}}\big]^\flat\forevery{x\in\mms}
\end{equation*}
holds, and in particular
\begin{equation*}
\big\vert\WHeat_t\omega(x)\big\vert \leq \bbE\Big[\rme^{-\int_0^t k(\B_r^x)/2\d r}\,\big\vert\omega(\B_t^x)\big\vert\,\One_{\{t < \zeta^x\}}\Big]\leq \sfC_t\,\Vert\omega\Vert_{\Ell^\infty}\forevery{x\in\mms},
\end{equation*}
where $\sfC_t$ is defined in \eqref{Eq:Exp integr assumption}.
\end{citetheorem}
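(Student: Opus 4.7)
The plan is to identify a $T_x\mms$-valued semimartingale whose endpoint expectations recover the two sides of the Feynman--Kac formula, and to verify that it is a true martingale by an explicit drift cancellation. Concretely, for $s\in [0,t\wedge \zeta^x)$ I set
\[
N_s \;:=\; \Q_s^x\,(\pa_s^x)^{-1}\,\big(\WHeat_{t-s}\omega\big)^\sharp\!(\B_s^x) \;\in\; T_x\mms,
\]
which is well defined because $\WHeat_{t-s}\omega$ is smooth for $s<t$ by the smoothing of the Hodge heat semigroup on $1$-forms. One has $N_0=(\WHeat_t\omega)^\sharp(x)$, and the goal is to show $N_0 = \bbE[N_t\,\One_{\{t<\zeta^x\}}]$; applying $\flat$ to both sides then yields the claim.

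The core computation is It\^o's formula for $N_s$. Writing $V_s := (\WHeat_{t-s}\omega)^\sharp$, standard stochastic calculus along $\B^x$ using stochastic parallel transport yields
\[
\rmd\big[(\pa_s^x)^{-1}V_s(\B_s^x)\big] \;=\; (\pa_s^x)^{-1}\!\Big[\partial_s V_s + \tfrac{1}{2}\,\mathrm{tr}\,\nabla^2 V_s\Big]\!(\B_s^x)\,\rmd s \;+\; \rmd M_s,
\]
with $M$ a local martingale driven by the anti-development $\W^x$. The Weitzenb\"ock identity $\vec{\Delta} = \nabla^*\nabla + \Ric$ on $1$-forms, transferred to vector fields via $\sharp$, reads $\mathrm{tr}\,\nabla^2 V_s = -\vec{\Delta}V_s + \Ric^{\sharp}V_s$; together with the backward heat equation $\partial_s V_s = \tfrac{1}{2}\vec{\Delta}V_s$, the $\vec{\Delta}$ contributions cancel, and the drift reduces to $\tfrac{1}{2}(\pa_s^x)^{-1}\Ric^{\sharp}V_s(\B_s^x)\,\rmd s$. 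Multiplying by $\Q_s^x$ through the product rule with the defining ODE for $\Q^x$, the two Ricci terms cancel exactly, so $N$ is a local martingale on $[0,t\wedge\zeta^x)$.

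To close the argument I would localize by $\tau_n := \inf\{s\geq 0 : \B_s^x\notin U_n\}\wedge t$ for a compact exhaustion $(U_n)$ of $\mms$. Gronwall applied to the adjoint ODE $\frac{\rmd}{\rmd s}(\Q_s^x)^* = -\tfrac{1}{2}(\pa_s^x)^{-1}\Ric\,\pa_s^x\,(\Q_s^x)^*$, combined with the symmetry of $\Ric$ and the pointwise hypothesis $\Ric\geq k$, produces the operator-norm bound
\[
\Vert\Q_s^x\Vert_{\mathrm{op}} \;\leq\; \exp\!\Big({-\tfrac{1}{2}}\!\int_0^s k(\B_r^x)\d r\Big) \;\leq\; \exp\!\Big(\tfrac{1}{2}\!\int_0^s k^-(\B_r^x)\d r\Big).
\]
For smooth compactly supported $\omega$ and fixed $n$, $\sup_{s\leq t,\,y\in U_n}\vert V_s(y)\vert<\infty$ by joint smoothness of $(s,y)\mapsto \WHeat_{t-s}\omega(y)$ on $[0,t]\times U_n$, so the stopped process $(N_{s\wedge \tau_n})_{s\in[0,t]}$ is a bounded martingale and optional stopping gives $N_0 = \bbE[N_{\tau_n}]$. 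Sending $n\to\infty$, dominated convergence — justified by the pathwise bound $\vert N_{s\wedge\tau_n}\vert \leq \Vert\omega\Vert_{\Ell^\infty}\exp\big(\tfrac{1}{2}\int_0^t k^-(\B_r^x)\d r\big)$ and the hypothesis $\sfC_t<\infty$ — together with $\{\tau_n<t\}\downarrow \{t\geq \zeta^x\}$ and the trap convention on the lifetime, delivers the Feynman--Kac identity. The pointwise bound $\vert\WHeat_t\omega(x)\vert \leq \sfC_t\,\Vert\omega\Vert_{\Ell^\infty}$ is then immediate by passing $\vert\cdot\vert$ inside the expectation and inserting the operator-norm bound on $\Q_t^x$.

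The main technical obstacle is that a general $\omega\in\Gamma_{\Ell^\infty}(T^*\mms)$ is merely bounded, so $V_s$ fails to be smooth as $s\uparrow t$ and It\^o's formula does not apply at the endpoint directly. The natural remedy is a two-step approximation: first prove the formula for smooth, compactly supported $\omega_k$ converging to $\omega$ in an appropriate sense, and then pass to the $\Ell^\infty$ limit, using the uniform estimate $\Vert \WHeat_t\omega_k\Vert_{\Ell^\infty}\leq \sfC_t\,\Vert\omega_k\Vert_{\Ell^\infty}$ supplied by the formula itself in the smooth case. It is precisely the exponential integrability $\sfC_t<\infty$ that drives both the uniform integrability inside the localized martingale argument and this final passage to the $\Ell^\infty$ limit, which is why it constitutes the decisive structural assumption of the theorem.
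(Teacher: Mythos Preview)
The paper does not supply its own proof of this result: it is quoted verbatim as \cite[Theorem~B.4]{driver2001}, with only the remark that the final pointwise estimate follows from the Feynman--Kac identity via the Gronwall bound $\Vert\Q_s^x\Vert_{\mathrm{op}}\leq \rme^{-\int_0^s k(\B_r^x)/2\,\rmd r}$. So there is no in-paper argument to compare against; your outline follows the standard route, and the local-martingale part (It\^o's formula, the Weitzenb\"ock identity, and the exact cancellation of the Ricci drift against the $\Q^x$-ODE) is correct.

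There is, however, a genuine gap in the passage to the limit $n\to\infty$. You invoke dominated convergence with the pathwise dominant $\vert N_{\tau_n}\vert \leq \Vert\omega\Vert_{\Ell^\infty}\,\rme^{\int_0^t k^-(\B_r^x)/2\,\rmd r}$. On the event $\{\tau_n\geq t\}$ this is fine, since then $N_{\tau_n}=N_t$ involves $\omega^\sharp(\B_t^x)$ directly. But on $\{\tau_n<t\}$ one has
\[
N_{\tau_n}=\Q_{\tau_n}^x(\pa_{\tau_n}^x)^{-1}\big(\WHeat_{t-\tau_n}\omega\big)^\sharp(\B_{\tau_n}^x),
\]
and the factor $\big\vert\WHeat_{t-\tau_n}\omega(\B_{\tau_n}^x)\big\vert$ is \emph{not} a priori bounded by $\Vert\omega\Vert_{\Ell^\infty}$. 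That $\Ell^\infty$-bound on $\WHeat_s\omega$ is precisely the second displayed inequality of the theorem --- part of the conclusion --- so the argument is circular at this step. The usual repairs are either to establish the semigroup domination $\vert\WHeat_s\omega\vert\leq \Schr{k}_s\vert\omega\vert$ beforehand by independent means (Kato's inequality for the Bochner Laplacian, cf.\ \cite{hess1980,guneysu2012}), which then supplies the missing uniform bound $\Vert\WHeat_{t-s}\omega\Vert_{\Ell^\infty}\leq \sfC_{t-s}\Vert\omega\Vert_{\Ell^\infty}\leq \sfC_t\Vert\omega\Vert_{\Ell^\infty}$ and closes your dominated-convergence step, or to control $\bbE\big[\vert N_{\tau_n}\vert\,\One_{\{\tau_n<t\}}\big]\to 0$ through an $\Ell^2$ or uniform-integrability argument exploiting the contractivity of $(\WHeat_s)_{s\geq 0}$ on $\Gamma_{\Ell^2}(T^*\mms)$. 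Either route works, but as written your dominant on $\{\tau_n<t\}$ is unjustified.
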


%\begin{citetheorem}[{\cite[Theorem B.4]{driver2001}}]\label{Th:Form FK} Let $t>0$ and suppose that $\Ric\geq k$ on $\mms$ for some continuous $k\colon \mms \to \R$. Assume that for every compact $K\subset\mms$, we have
%\begin{equation*}
%\bbE\Big[\rme^{\int_0^t k^-(\B_r^x)/2\d r}\,\One_{\{\B_t^x\in K\}}\,\One_{\{t < \zeta^x\}}\Big] < \infty\forevery{x\in\mms}.
%\end{equation*}
%Then for every $\omega\in\Gamma_{L^2}(T^*\mms)$ with compact support, the \emph{Feynman--Kac formula}
%\begin{equation*}
%\WHeat_t\omega(x) = \bbE\big[\Q_t^x\,(\Par_t^x)^{-1}\,\omega^\sharp(\B_t^x)\,\One_{\{t < \zeta^x\}}\big]^\flat\forevery{x\in\mms}
%\end{equation*}
%holds, and in particular, if in addition $\omega$ is bounded, 
%\begin{equation*}
%\big\vert\WHeat_t\omega(x)\big\vert \leq \bbE\Big[\rme^{-\int_0^t k(\B_r^x)/2\d r}\,\big\vert\omega(\B_t^x)\big\vert\,\One_{\{t < \zeta^x\}}\Big]\leq \sfC_t\,\Vert\omega\Vert_{\Ell^\infty}\forevery{x\in\mms},
%\end{equation*}
%where $\sfC_t$ is defined in \eqref{Eq:Exp integr assumption}.
%\end{citetheorem}

\begin{remark} On weighted Riemannian manifolds, in the notation of Section \ref{Extensions} one has to  replace $(\WHeat_t)_{t\geq 0}$ by the semigroup -- defined on the Hilbert space of $1$-forms that are $L^2$ w.r.t.~$\smash{\rme^{-2\Phi}\,\meas}$ -- which is generated by the essentially self-adjoint operator $\smash{-\vec{\Delta} - 2\d\, i_{\nabla\Phi} - 2\, i_{\nabla\Phi}\,\rmd}$. Here $i_{\nabla\Phi}$ denotes interior multiplication of differential forms with the vector field $\nabla\Phi$ \cite[Section 1.5]{li1992}.
\end{remark}

\section{Proof of Theorem \ref{Th:First theorem} and Theorem \ref{Cor:Dynkin}}

This chapter treats the stochastic completeness of $\mms$, Bismut--Elworthy--Li's derivative formula, and the $\Ell^\infty$-$\Lip$-regularization of the heat semigroup $(\ChHeat_t)_{t\geq 0}$ if we have $\Ric\geq k$ on $\mms$ for some continuous function $k\colon \mms\to\R$ satisfying \eqref{Eq:Exp integr assumption}.

\subsection{Stochastic completeness}\label{Sec:Stochastic completeness}

A key tool for proving stochastic completeness under geodesic completeness, already used in \cite{bakry1986}, are sequences of first-order cutoff-functions \cite[Chapter 2]{strichartz1983}. Their existence is equivalent to the geodesic completeness of $\mms$ (\cite{pigola}, see also \cite{guneysu2016}).

\begin{lemma}\label{Le:Cutoff functions} There exists a sequence $(\psi_n)_{n\in\N}$ in $\Cont_\comp^\infty(\mms)$ satisfying
\begin{enumerate}[leftmargin=1.25cm,label=\textnormal{(\roman*)}]
\item $\psi_n(\mms) \subset [0,1]$ for every $n\in\N$,
\item for all compact $K\subset \mms$, there exists $N\in\N$ such that $\onto{\psi_n}{K} = \One_K$ for every $n\geq N$, and
\item $\smash{\Vert \rmd\psi_n \Vert_{L^{\infty}} \to 0}$ as $n\to\infty$.
\end{enumerate}
\end{lemma}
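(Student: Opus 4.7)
The plan is to exploit the Hopf--Rinow theorem (equivalent to geodesic completeness), which ensures that the closed metric balls $\smash{\overline{\Ball}_R(o)}$ around a fixed basepoint $o\in\mms$ are compact and that $r:=\met(o,\cdot)$ is a proper, $1$-Lipschitz function on $\mms$. I would construct the $\psi_n$ in two stages: first produce Lipschitz, compactly supported cutoffs with small slope by composing $r$ with a suitable one-variable cutoff, then upgrade them to smooth functions while essentially preserving both the support and the gradient bound.

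For the first stage, fix smooth $\chi_n\colon[0,\infty)\to[0,1]$ with $\chi_n\equiv 1$ on $[0,n]$, $\chi_n\equiv 0$ on $[2n,\infty)$, $\chi_n$ nonincreasing, and $\vert\chi_n'\vert\leq 2/n$, and set $\varphi_n:=\chi_n\circ r$. Then properness of $r$ yields $\spt(\varphi_n)\subset\smash{\overline{\Ball}_{2n}(o)}$, which is compact; (i) is immediate; and (ii) holds because every compact $K\subset\mms$ is contained in some $\Ball_R(o)$, so $\onto{\varphi_n}{K}=\One_K$ for all $n\geq R$. Since $r$ is $1$-Lipschitz and $\chi_n$ is $(2/n)$-Lipschitz, $\varphi_n$ is $(2/n)$-Lipschitz, hence (by Rademacher) differentiable $\meas$-almost everywhere with $\vert\rmd\varphi_n\vert\leq 2/n$.

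The only nontrivial step is the smoothing of $\varphi_n$ into a $C^\infty_\comp(\mms)$-function without spoiling (i)--(iii). The cleanest route is to invoke a Greene--Wu style smoothing of the distance function: on any complete Riemannian manifold there exists a smooth proper function $\tilde r\colon\mms\to[0,\infty)$ with $\vert\tilde r-r\vert\leq 1$ and $\vert\nabla\tilde r\vert\leq C$ for some absolute constant $C$. Replacing $r$ by $\tilde r$, enlarging the plateaus of $\chi_n$ by one unit on each side, and setting $\psi_n:=\chi_n\circ\tilde r$ then yields the required sequence in $C^\infty_\comp(\mms)$, with $\Vert\rmd\psi_n\Vert_{\Ell^\infty}\leq 2C/n\to 0$.

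The main obstacle is precisely this smoothing step: on a noncompact manifold a naive convolution-based mollification is unavailable, and one must either cite the Greene--Wu approximation or, alternatively, use a locally finite partition of unity $(\eta_j)$ subordinate to a cover by precompact coordinate charts and mollify each $\eta_j\varphi_n$ in local coordinates at a scale $\epsilon_{n,j}$ chosen so small that the combined sum still satisfies the bound $\Vert\rmd\psi_n\Vert_{\Ell^\infty}=o(1)$. The bookkeeping needed in the latter approach, to ensure both the compact containment of the support and the uniform-in-$x$ decay of the gradient as $n\to\infty$, is what makes the Greene--Wu shortcut preferable; this is exactly the content of Pigola's characterization of geodesic completeness via the existence of such cutoffs.
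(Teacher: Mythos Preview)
The paper does not actually supply a proof of this lemma; it merely records it with citations to \cite{strichartz1983}, \cite{pigola}, and \cite{guneysu2016}, noting that the existence of such first-order cutoffs is in fact equivalent to geodesic completeness. Your proposal is a correct and standard construction: Hopf--Rinow gives properness of $r=\met(o,\cdot)$, composition with a one-variable cutoff produces a Lipschitz candidate with controlled slope and compact support, and the Greene--Wu smoothing of the distance function (producing a smooth proper $\tilde r$ with $\vert\tilde r-r\vert\leq 1$ and $\vert\nabla\tilde r\vert\leq C$) upgrades this to $C_\comp^\infty(\mms)$ without losing (i)--(iii). Your closing remark that this is ``exactly the content of Pigola's characterization'' is on target and coincides with one of the references the paper itself invokes. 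In short, you have written out the proof that the paper outsources to the literature, and your argument is sound.
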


\begin{proof}[Proof of \textnormal{(i)} in Theorem \ref{Th:First theorem}] We show  \eqref{Eq:Equivalent stoch compl}, i.e.~that $\ChHeat_{t}\One_\mms = \One_{\mms}$ for every $t > 0$. Let $\phi\in\Cont_\comp^\infty(\mms)$, and let $(\psi_n)_{n\in\N}$ be a sequence of first-order cutoff functions provided by Lemma \ref{Le:Cutoff functions}. Then Theorem \ref{Th:Form FK} applied to the $1$-form $\omega := \rmd\psi_n$ for every $n\in\N$ gives
\begin{align*}
\big\Vert\WHeat_s\rmd\psi_n \big\Vert_{\Ell^\infty} \leq %\sup_{x\in\mms} \bbE\Big[\rme^{-\int_0^{s} k(\B_r^x)/2\d r}\,\big\vert\rmd\psi_n(\B_{s}^x)\big\vert\,\One_{\{s<\zeta^x\}}\Big]\\&\leq \sup_{x\in\mms}\bbE\Big[\rme^{\int_0^s k^-(\B_r^x)/2\d r}\,\One_{\{s<\zeta^x\}}\Big]\,\big\Vert\vert\rmd\psi_n\vert\big\Vert_{L^{\infty}}  
\sfC_s\,\Vert \rmd\psi_n \Vert_{L^{\infty}}\leq \sfC_t\,\Vert\rmd\psi_n\Vert_{L^{\infty}},
\end{align*}
uniformly in $s\in [0,t]$. Since $\ChHeat_{\cdot}\psi_n$ solves the heat equation \eqref{Eq:Functional heat equ}, also using Fubini's theorem, integration by parts as well as the commutation rule \eqref{Eq:Commutation} we arrive at
\begin{align*}
\int_\mms \big(\ChHeat_{t}\psi_n - \psi_n\big)\,\phi\d\meas &=\frac{1}{2} \int_\mms \int_0^{t} \phi\,\Delta\ChHeat_s\psi_n\d s\d\meas\\
&= -\frac{1}{2}\int_0^{t}\!\int_\mms \big\langle \rmd \phi,\rmd\ChHeat_s\psi_n\big\rangle\d\meas \d s\\
&= -\frac{1}{2}\int_0^{t}\!\int_\mms \big\langle \rmd \phi, \WHeat_s\rmd\psi_n\big\rangle\d\meas \d s.
\end{align*}
Therefore, we obtain
\begin{align*}
\bigg\vert\!\int_\mms \big(\ChHeat_{t}\One_\mms - \One_\mms\big)\, \phi\d\meas\,\bigg\vert &= \lim_{n\to\infty} \bigg\vert\!\int_\mms \big(\ChHeat_{t}\psi_n - \psi_n\big)\, \phi\d\meas\,\bigg\vert\\
&\leq \limsup_{n\to\infty} \frac{1}{2}\int_0^{t}\!\int_\mms \vert\rmd \phi\vert\,\big\vert\WHeat_{s}\rmd\psi_n\big\vert\d\meas\d s\\
\textcolor{white}{\int_0^t}&\leq \frac{\sfC_t\,t}{2}\,\Vert\rmd \phi\Vert_{L^1}\,\limsup_{n\to\infty}\Vert  \rmd\psi_n\Vert_{L^{\infty}} = 0.
\end{align*}
Since $\phi$ was arbitrary, this proves the claim.
\end{proof}

\subsection[Bismut--Elworthy--Li's derivative formula and the Lipschitz smoothing property]{Bismut--Elworthy--Li's derivative formula and the Lipschitz\\smoothing property}\label{Sec:Bismut}

In view of proving Bismut--Elworthy--Li's derivative formula and the $\Ell^\infty$-$\Lip$-regulariza\-tion property of $(\ChHeat_t)_{t\geq 0}$, for convenience we state the following version of the Burkholder--Davis--Gundy inequality for $q\in [1,\infty)$ (although we only need the upper bounds, respectively), which improves the classically known constants to better ones.

\begin{citelemma}[{\cite[Theorem 2]{ren2008}}]\label{Le:BDG} Let $(\M_r)_{r\geq 0}$ be a real-valued continuous local martingale with $\M_0 = 0$, and let $q\in [1,\infty)$. Then
\begin{equation*}
(8q)^{-q/2}\,\bbE\big[[\M]_\tau^{q/2}\big]\leq \bbE\bigg[\sup_{r\in [0,\tau]}\vert\M_r\vert^q\bigg]\leq (8q)^{q/2} \,\bbE\big[[\M]_\tau^{q/2}\big]
\end{equation*}
for every stopping time $\tau$, where $([\M]_r)_{r\geq 0}$ denotes the quadratic variation process of $(\M_r)_{r\geq 0}$.
\end{citelemma}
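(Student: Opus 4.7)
The plan is to prove this sharp-constant Burkholder--Davis--Gundy estimate along the classical route combining It\^o's formula with Doob's $L^q$-maximal inequality, while carefully optimising all intermediate constants. The key input is the decomposition $\rmd|\M_r|^q=q|\M_r|^{q-1}\sgn(\M_r)\,\rmd \M_r+\tfrac{q(q-1)}{2}|\M_r|^{q-2}\,\rmd[\M]_r$, which is obtained by applying It\^o's formula to the regularised function $(x^2+\varepsilon)^{q/2}$ and sending $\varepsilon\downarrow 0$.

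For the upper bound in the case $q\geq 2$, I would first localise via a sequence of stopping times $\tau_n\uparrow\infty$ so that the martingale contribution in the decomposition above vanishes in expectation. Taking expectations at $\tau\wedge\tau_n$, bounding $|\M_s|^{q-2}\leq(\sup_{r\leq\tau}|\M_r|)^{q-2}$ pointwise, and applying H\"older's inequality with conjugate exponents $q/(q-2)$ and $q/2$ yields
\begin{equation*}
\bbE\big[|\M_{\tau\wedge\tau_n}|^q\big]\leq \frac{q(q-1)}{2}\,\bbE\bigg[\sup_{r\leq\tau}|\M_r|^q\bigg]^{(q-2)/q}\,\bbE\big[[\M]_\tau^{q/2}\big]^{2/q}.
\end{equation*}
Doob's maximal inequality $\bbE[\sup_{r\leq\tau}|\M_r|^q]\leq(q/(q-1))^q\,\bbE[|\M_\tau|^q]$ together with Fatou's lemma as $\tau_n\uparrow\infty$ then produces a self-referential estimate for $\bbE[\sup_{r\leq\tau}|\M_r|^q]$; rearranging and simplifying the resulting constants collapses the prefactor to the claimed form $(8q)^{q/2}$.

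For $1\leq q<2$ the It\^o argument above breaks down because $|x|^{q-2}$ is singular at the origin. Here I would reduce to the $q=2$ case through a Lenglart--Rebolledo-type domination inequality, using that $\bbE[\sup_{r\leq\tau}\M_r^2]\leq 4\,\bbE[[\M]_\tau]$ follows directly from Doob and then interpolating at the level of $L^q$-norms of the nondecreasing processes $\sup_{r\leq\cdot}|\M_r|^2$ and $[\M]$. The lower bound in both regimes is obtained either by an analogous It\^o--H\"older chain applied to $x\mapsto x^{q/2}$ along the finite-variation process $[\M]$, or as a corollary of the upper bound applied to the semimartingale $\M^2-[\M]$ and the identity $[\M^2-[\M]]_r=4\int_0^r\M_s^2\,\rmd[\M]_s$.

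The main obstacle is obtaining the precise universal constant $(8q)^{q/2}$ rather than a generic $C_q$: the factor $8$ emerges only after a careful balancing in the Young-type inequality used to decouple the self-referential appearance of $\sup|\M|$ and $[\M]^{1/2}$, and requires the observation that $(q/(q-1))^q$ remains bounded in $q$. Since the lemma serves in the excerpt only as an auxiliary analytic tool and its complete proof is given in \cite[Theorem 2]{ren2008}, I would in practice invoke that reference directly rather than reproduce the full optimisation.
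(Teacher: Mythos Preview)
The paper does not prove this lemma at all: it is stated as a cited result from \cite[Theorem 2]{ren2008} and is used purely as a black box. Your concluding remark --- that in practice you would invoke the reference directly --- is therefore exactly what the paper does, and is the correct response here; the preceding sketch of the It\^o--Doob--H\"older route is reasonable background but superfluous for the purposes of this paper.
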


Recall the process $\Q^x$ defined by \eqref{Eq:Q ODE} and taking values in $T_x\mms$.

\begin{proof}[Proof of \textnormal{(ii)} in Theorem \ref{Th:First theorem}] Fix $x\in\mms$, $t>0$ and $\xi\in T_x\mms$. It suffices to assume $\smash{\vert\xi\vert \leq 1 }$. We first assume that $f\in\Cont_\comp^\infty(\mms)$. By \cite[Proposition 3.2]{driver2001} and keeping in mind that $\zeta^x = \infty$ a.s., the process $\NN^x$ given by
\begin{equation*}
\NN_r^x := \Big\langle\Q_r^x\,(\Par_r^x)^{-1}\,\nabla\ChHeat_{t-r}f(\B^x_r),\frac{t-r}{t}\,\xi\Big\rangle + \frac{1}{t}\,\ChHeat_{t-r}f(\B_r^x)\int_0^r\!\big\langle\Q_s^x \xi,\rmd\W_s^x\big\rangle,
\end{equation*}
$r\in [0,t]$, is a local martingale. We show that under the given assumption \eqref{Eq:Exp integr assumption} on $k$, this process is even a martingale. 

As already indicated in Theorem \ref{Th:Form FK}, given any $s\geq 0$, it follows from Gronwall's inequality and $\Ric\geq k$ on $\mms$ that a.s.~we have
\begin{equation}\label{Eq:Gronwall}
\big\vert \Q_s^x\big\vert \leq \rme^{-\int_0^s k(\B_r^x)/2\d r} \leq \rme^{\int_0^s k^-(\B_r^x)/2\d r}.
\end{equation}
Hence, for every $q\in [1,\infty)$, by Lemma \ref{Le:BDG} we obtain
\begin{align}\label{Eq:Lq bound Q_s}
\bbE\bigg[\sup_{r\in [0,t]}\bigg\vert\!\int_0^r\!\big\langle\Q_s^x \xi(x),\rmd\W_s^x\big\rangle\bigg\vert^q\bigg] &\leq (8q)^{q/2}\,\bbE\bigg[\bigg(\int_0^t\vert\Q_s^x\vert^2\d s\bigg)^{\!q/2}\bigg]\nonumber\\
&\leq (8q)^{q/2}\,t^{q/2}\, \sup_{y\in\mms}\bbE\Big[\rme^{\int_0^t qk^-(\B_r^y)/2\d r}\Big].
\end{align}
(This estimate will only be needed for $q=1$ in this proof, but is derived for arbitrary $q$ as above for later convenience.) Now, estimating $\vert \Q_r^x\vert$ as in \eqref{Eq:Gronwall} above and using the commutation relation \eqref{Eq:Commutation} as well as Theorem \ref{Th:Form FK}, for all $r\in [0,t]$ one a.s.~has 
\begin{align*}
\vert\NN_r^x\vert &\leq \rme^{\int^{r}_0k^-(\B_s^x)/2 \d s}\, \big\vert \WHeat_{t-r} \d f(\B_r^x)\big\vert+\frac{\Vert f\Vert_{\Ell^\infty}}{t}\,\left\vert \int^{r}_0\!\big\langle\Q_s^x \xi,\d\W_s^x\big\rangle\right\vert\\
%&\leq \rme^{\int^{t}_0k^-(\B_s^x)/2 \d s}\, \bbE_{\B_r^x}\Big[\rme^{-\int^{t-r}_0k(\Y_s)/2 \d s}\,\big\vert\rmd f(\Y_{t-r})\big\vert\Big]+\frac{\Vert f\Vert_{\Ell^\infty}}{t}\,\left\vert \int^{r}_0\!\big\langle\Q_s^x \xi,\rmd\W_s^x\big\rangle\right\vert\\
%&\leq \rme^{\int^{t}_0k^-(\B_s^x)/2 \d s}\, \sup_{y\in\mms}\bbE\Big[\rme^{-\int^{t-r}_0k(\B_s^y)/2 \d s}\Big]\,\big\Vert \vert \rmd f\vert\big\Vert_{\Ell^\infty} + \frac{\Vert f\Vert_{\Ell^\infty}}{t}\,\left\vert \int^{r}_0\!\big\langle\Q_s^x \xi,\rmd\W_s^x\big\rangle\right\vert\\
&\leq \rme^{\int^{t}_0k^-(\B_s^x)/2 \d s}\,\sfC_{t-r}\,\Vert \rmd f\Vert_{\Ell^\infty} + \frac{\Vert f\Vert_{\Ell^\infty}}{t}\,\left\vert\int^{r}_0\!\big\langle\Q_s^x\xi,\rmd\W_s^x\big\rangle\right\vert\\
&\leq \rme^{\int^{t}_0k^-(\B_s^x)/2 \d s}\,\sfC_{t}\,\Vert \rmd f\Vert_{\Ell^\infty} + \frac{\Vert f\Vert_{\Ell^\infty}}{t}\,\left\vert\int^{r}_0\!\big\langle\Q_s^x\xi,\rmd\W_s^x\big\rangle\right\vert.
\end{align*}
%Here, we denote by $\Y$ a generic path in $\mathrm{C}([0,\infty);\mms)$. 
It follows that
\begin{align*}
\bbE\bigg[\sup_{r\in [0,t]}\vert \NN_r^x\vert\bigg]\leq \sfC_t^2\,\Vert \rmd f \Vert_{\Ell^\infty}+\frac{\Vert f\Vert_{\Ell^\infty}}{t}\,\bbE\bigg[\sup_{r\in [0,t]}\bigg\vert\!\int^{r}_0\!\big\langle\Q_s^x\xi,\rmd\W_s^x\big\rangle\bigg\vert\bigg].
\end{align*}
The first summand on the right-hand side is finite thanks to \eqref{Eq:Exp integr assumption}. Estimating the second summand by \eqref{Eq:Lq bound Q_s} above for $q=1$, also the second summand is finite again by \eqref{Eq:Exp integr assumption}. It follows that $\NN^x$ is a true martingale, and thus
\begin{equation}\label{Eq:Bismut proof}
\big\langle \nabla\ChHeat_tf(x),\xi \big\rangle= \bbE\big[\NN_0^x\big]=\bbE\big[\NN_t^x\big]=\frac{1}{t}\,\bbE\left[ f(\B_t^x)\int^{t}_0\!\big\langle\Q_s^x \xi,\rmd\W_s^x\big\rangle\right].
\end{equation}

The claimed equality for bounded $f\in C^\infty(\mms)$ follows by replacing $f$ by $\psi_n\,f$ in \eqref{Eq:Bismut proof} for every $n\in\N$, where $(\psi_n)_{n\in\N}$ is as in Lemma \ref{Le:Cutoff functions}, and letting $n\to \infty$ (together with the dominated convergence theorem on the right-hand side). In turn, if only $f\in L^\infty(\mms)$, a similar procedure works by replacing $f$ by $\ChHeat_\varepsilon f$ in \eqref{Eq:Bismut proof}, where $\varepsilon > 0$, and letting $\varepsilon \to 0$.
\end{proof}

\begin{proof}[Proof of \textnormal{(iii)} in Theorem \ref{Th:First theorem}] Using the previous Bismut--Elworthy--Li formula  and \eqref{Eq:Lq bound Q_s} above for $q=1$, for every $x\in M$, $t>0$ and $\xi\in T_xM$ with $\vert\xi\vert\leq 1$, we get 
\begin{align*}
\big\vert\big\langle\nabla \ChHeat_tf(x),\xi\big\rangle\big\vert &\leq \frac{1}{t}\,\bbE\left[\left\vert\int^t_0\!\big\langle\Q^x_s \xi,\d\W^x_s\big\rangle\right\vert\right]\,\Vert f\Vert_{\Ell^\infty}\\
&\leq \sqrt{8}\,t^{-1/2}\,\sup_{x\in\mms}\bbE\Big[\rme^{\int^t_0 k^-(\B^x_r)/2 \d r}\Big]\,\Vert f\Vert_{\Ell^\infty},
\end{align*} 
and duality gives
\begin{equation*}
\Lip(\ChHeat_tf)\leq \sqrt{8}\, t^{-1/2}\,\sup_{x\in\mms}\bbE\Big[\rme^{\int^t_0 k^-(\B^x_r)/2 \d r}\Big]\,\Vert f\Vert_{\Ell^\infty}.\qedhere
\end{equation*}
\end{proof}

Now we assume Kato decomposability of $k$ in the rest of this chapter, devoting ourselves to the proof of Theorem \ref{Cor:Dynkin}. In this situation, one has to guarantee that the right-hand side of Bismut--Elworthy--Li's formula is  well-defined for $f\in \Ell^p(\mms)$, where $p\in (1,\infty)$, which is essentially the content of the following lemma.

\begin{lemma}\label{Le:Linear operator} Let $t\geq  0$ and $V\in\Gamma_{\Ell^\infty}(T\mms)$. Then for every $f\in\Ell^\infty(\mms)$ and $x\in\mms$, the random variable $\smash{f(\B_t^x)\int_0^t\big\langle\Q_s^xV(x),\rmd\W_s^x\big\rangle}$ is integrable. Moreover, for every $p\in (1,\infty]$, the operator $\sfE_t^V\!$ given on functions $f\in\Ell^\infty(\mms)\cap\Ell^p(\mms)$ in terms of
\begin{equation*}
\sfE_t^V\!f(x):=\bbE\bigg[f(\B_t^x)\int_0^t\!\big\langle\Q_s^xV(x),\rmd\W_s^x\big\rangle\bigg]\forevery{x\in\mms}
\end{equation*}
extends to a bounded linear operator from $\Ell^p(\mms)$ into $\Ell^p(\mms)$, and the previous representation is valid and well-defined for every $f\in\Ell^p(\mms)$.
\end{lemma}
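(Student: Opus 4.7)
The plan rests on uniform $\Ell^q$-moment control of the martingale $\M_t^x := \int_0^t \langle \Q_s^x V(x), \rmd\W_s^x\rangle$, combined with H\"older's inequality, stochastic completeness (Theorem \ref{Th:First theorem}(i)) and the $\Ell^1$-contractivity of $(\ChHeat_t)_{t\geq 0}$. Fix $q\in[1,\infty)$. Since $\vert\Q_s^x\vert\leq \rme^{\int_0^s k^-(\B_r^x)/2\d r}$ a.s.~by the Gronwall estimate \eqref{Eq:Gronwall} and $\vert V(x)\vert\leq \Vert V\Vert_{\Ell^\infty}$ for $\meas$-a.e.~$x$, the very computation leading to \eqref{Eq:Lq bound Q_s} gives, for such $x$,
\begin{equation*}
\bbE\big[\vert\M_t^x\vert^q\big] \leq (8q)^{q/2}\, t^{q/2}\, \Vert V\Vert_{\Ell^\infty}^q\,\sup_{y\in\mms}\bbE\Big[\rme^{\int_0^t qk^-(\B_r^y)/2\d r}\Big] =: A_{t,q}^q,
\end{equation*}
and $A_{t,q}<\infty$ by Khasminskii's Lemma \ref{Le:Khas} applied to $qk$. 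For $q=1$ and $f\in\Ell^\infty(\mms)$, integrability of $f(\B_t^x)\,\M_t^x$ is then immediate from $\vert\bbE[f(\B_t^x)\,\M_t^x]\vert\leq \Vert f\Vert_{\Ell^\infty}\,A_{t,1}$, settling the first assertion.

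Next I would prove the $\Ell^p$-bound. Let $p\in(1,\infty)$, let $q=p/(p-1)$ be the conjugate exponent, and let $f\in\Ell^\infty(\mms)\cap\Ell^p(\mms)$. H\"older's inequality together with the moment bound above gives pointwise a.e.
\begin{equation*}
\big\vert\sfE_t^V f(x)\big\vert^p \leq \bbE\big[\vert f(\B_t^x)\vert^p\big]\,A_{t,q}^p = A_{t,q}^p\,\ChHeat_t\big(\vert f\vert^p\big)(x),
\end{equation*}
where the last equality uses \eqref{Eq:Formula Pt BM} together with stochastic completeness. Integrating in $x$ and invoking $\Ell^1$-contractivity of $(\ChHeat_t)_{t\geq 0}$ yields $\Vert\sfE_t^V f\Vert_{\Ell^p}\leq A_{t,q}\,\Vert f\Vert_{\Ell^p}$, while for $p=\infty$ the bound $\Vert\sfE_t^V f\Vert_{\Ell^\infty}\leq A_{t,1}\,\Vert f\Vert_{\Ell^\infty}$ follows directly from the first step. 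Since $\Ell^\infty(\mms)\cap\Ell^p(\mms)$ is dense in $\Ell^p(\mms)$ for $p<\infty$, $\sfE_t^V$ extends uniquely to a bounded linear operator from $\Ell^p(\mms)$ into $\Ell^p(\mms)$.

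Finally I would verify that the representation remains valid for arbitrary $f\in\Ell^p(\mms)$. For such $f$, H\"older together with the $\Ell^q$-bound on $\M_t^x$ gives $\bbE[\vert f(\B_t^x)\,\M_t^x\vert]\leq A_{t,q}\,\ChHeat_t(\vert f\vert^p)(x)^{1/p}$, which is finite for $\meas$-a.e.~$x$ since $\ChHeat_t(\vert f\vert^p)\in\Ell^1(\mms)$, so the right-hand side of the claimed formula is well-defined a.e. Approximating $f$ in $\Ell^p(\mms)$ by $(f_n)_n\subset\Ell^\infty(\mms)\cap\Ell^p(\mms)$, a subsequence can be extracted so that $\ChHeat_t(\vert f_n-f\vert^p)(x)\to 0$ and $\sfE_t^V f_n(x)\to\sfE_t^V f(x)$ for $\meas$-a.e.~$x$; combined with the uniform $\Ell^q$-bound on $\M_t^x$, H\"older then ensures $\bbE[f_n(\B_t^x)\,\M_t^x]\to\bbE[f(\B_t^x)\,\M_t^x]$ pointwise a.e., and the representation passes to the limit. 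The argument is essentially routine; the only slight subtlety lies in the subsequence extraction when transferring $\Ell^p$-convergence to pointwise a.e.~convergence of the expectations, but no genuine obstacle arises, as Khasminskii's lemma supplies all moments of $\M_t^x$ needed.
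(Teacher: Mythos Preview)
Your proof is correct and follows essentially the same approach as the paper: both arguments rest on the $\Ell^q$-moment bound \eqref{Eq:Lq bound Q_s} for the stochastic integral (finite by Khasminskii's lemma), H\"older's inequality with conjugate exponent $q=p/(p-1)$, and the identity $\bbE[\vert f(\B_t^x)\vert^p]=\ChHeat_t(\vert f\vert^p)(x)$ together with the $\Ell^1$-behaviour of $(\ChHeat_t)_{t\geq 0}$. The paper is terser about the final approximation step (which it simply calls ``standard''), whereas you spell out the subsequence argument; conversely, the paper phrases the integration step via mass preservation rather than $\Ell^1$-contractivity, but these are equivalent here once stochastic completeness is in place.
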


\begin{proof} Let $V\in \Gamma_{\Ell^\infty}(T\mms)$ and $f\in\Ell^\infty(\mms)$, for which we assume without loss of generality that $\smash{\big\Vert \vert V\vert\big\Vert_{\Ell^\infty} \leq 1}$ and $\Vert f\Vert_{\Ell^\infty} \leq 1$. The inequality \eqref{Eq:Lq bound Q_s} for $q=1$ and Lemma \ref{Le:Khas} directly show the claimed integrability of  $\smash{f(\B_t^x)\int_0^t\big\langle\Q_s^xV(x),\d\W_s^x\big\rangle}$, and they also show that $\smash{\sfE_t^V}\!$ is a bounded linear operator from $\Ell^\infty(\mms)$ into $\Ell^\infty(\mms)$. 

%If $p\in (1,\infty)$, then Hölder's inequality, \eqref{Eq:Lq bound Q_s} for $q = p/(p-1)$, contractivity of $(\ChHeat_t)_{t\geq 0}$ and Theorem \ref{Cor:Lp properties} below show that there exist finite constants $C_1,C_2\geq 0$ depending only on $k^-$ and $p$ such that for every $f\in \Ell^p(\mms)\cap\Ell^\infty(\mms)$,
%\begin{equation*}
%\big\Vert \sfE_t^V\!f\big\Vert_{\Ell^p} \leq C_1\,t^{(p-1)/2p}\,\rme^{C_2 t}\,\Vert f\Vert_{\Ell^p},
%\end{equation*}
%and we conclude by an approximation argument as in the proof of Theorem \ref{Cor:Lp properties}.

If $p\in (1,\infty)$, successively using Hölder's inequality, \eqref{Eq:Lq bound Q_s} for $q = p/(p-1)$, Lemma \ref{Le:Khas} again and mass preservation of $(\ChHeat_t)_{t\geq 0}$, we infer the existence of a finite constant $\mathsf{C}> 0$ depending only on $k^-$, $t$ and $p$ such that for every $f\in \Ell^p(\mms)\cap\Ell^\infty(\mms)$,
\begin{align*}
	\big\Vert \sfE_t^V\!f\big\Vert_{\Ell^p}^p &= \int_{\mms} \Big\vert\bbE\Big[f(\B_t^x)\int_0^t\!\big\langle\Q_s^xV(x),\rmd\W_s^x\big\rangle\Big]\Big\vert^p\d\meas(x)\\
	&\leq \int_{\mms} \bbE\big[\vert f(\B_t^x)\vert^p\big]\,\bbE\Big[\Big\vert\!\int_0^t\!\big\langle\Q_s^xV(x),\rmd\W_s^x\big\rangle\Big\vert^q\Big]^{p/q}\d\meas(x)\\
	&\leq (8q)^{p/2}\,t^{p/2}\,\sup_{y\in\mms}\bbE\Big[\rme^{\int_0^t qk^-(\B_r^x)/2\d r}\Big]^{p-1} \int_{\mms}\ChHeat_t\big(\vert f\vert^p\big)(x)\d\meas(x)\\
	&\leq \mathsf{C}\,(8q)^{p/2}\,t^{p/2}\,\Vert f\Vert^p_{\Ell^p}.
\end{align*}
We conclude the statement by a standard approximation argument.
\end{proof}

\begin{proof}[Proof of Theorem \ref{Cor:Dynkin}] Trivially, $L^\infty(\mms)\cap L^p(\mms)$ is dense in $L^p(\mms)$. Note that, given $p \in (1,\infty)$, and $f\in\Ell^p(\mms)$, it follows from the divergence theorem as well as Lemma \ref{Le:Linear operator} -- replacing $\xi$ by an appropriate smooth and bounded vector field $V\in \Gamma(T\mms)$ such that $V(x)= \xi$ -- that both sides of \eqref{Eq:Bismut proof} are continuous in $f$ w.r.t.~convergence in $\Ell^p(\mms)$. In particular, the desired pointwise identity follows.
\end{proof}

\section{Proof of Theorem \ref{Th:Equivalences}}

We turn to characterizations of continuous lower Ricci curvature bounds in terms of functional inequalities and existence of couplings. Throughout this chapter, we assume that $k\colon \mms\to\R$ is continuous, and only state explicitly if we need \eqref{Eq:Exp integr assumption}.

\subsection[From the $\Ell^1$-Bochner inequality to lower Ricci bounds]{From the {\boldmath{$\Ell^1$}}-Bochner inequality to lower Ricci bounds}\label{Sec:3}

As already hinted, the key point in showing the implication ``(ii) $\Longrightarrow$ (i)'' in Theorem \ref{Th:Equivalences} is the well-known Bochner formula \eqref{Eq:Bochner formula}, subject to a clever choice of $f$ as granted by the subsequent lemma, together with the chain rule to deduce $\Ric\geq k$ on $\mms$. 

It is well-known in Riemannian geometry that, given any $x\in\mms$, there exists an open subset $O_x \subset T_x\mms$ such that the restriction of the exponential map to $O_x$ provides a diffeomorphism $\exp_x \colon O_x \to \exp_x(O_x)$. We denote its inverse by $\exp_x^{-1}$.

\begin{citelemma}[{\cite[Lemma 3.2]{vonrenesse2005}}]\label{Le:Nice function} Let $x\in\mms$ and $\xi\in T_x\mms$ with unit norm. Let $\scrH := \big\lbrace\!\exp_x \eta : \eta \in O_x,\ \langle \eta,\xi\rangle = 0\big\rbrace$ be the $(\dim(\mms)-1)$-dimensional hypersurface in $\mms$ orthogonal to $\xi$ at $x$. Then there exists an open neighborhood $U \subset \exp_x(O_x)$ of $x$ such that the \emph{signed distance function} $\smash{\sdist_\scrH\colon U \to \R}$ given by
\begin{equation*}
\sdist_\scrH(y) := \rho(y,\scrH)\,\sgn\!\big\langle\xi,\exp_x^{-1}y\big\rangle,\quad\text{where}\quad \rho(y,\scrH) := \inf_{z\in\scrH}\rho(y,z),
\end{equation*}
obeys
\begin{equation*}
\sdist_\scrH \in \Cont^\infty(U),\quad \nabla \sdist_\scrH(x) = \xi,\quad \big\vert \nabla \sdist_\scrH(U)\big\vert = \{1\},\quad\Hess\sdist_\scrH(x) = 0.
\end{equation*}
\end{citelemma}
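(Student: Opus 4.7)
The plan is to reduce everything to standard differential-geometric facts about signed distance functions to smooth hypersurfaces that are \emph{geodesic at the base point to second order}. The regularity of $\sdist_\scrH$, the identification $\nabla\sdist_\scrH(x)=\xi$, and the eikonal identity $|\nabla\sdist_\scrH|\equiv 1$ on $U$ will come from the tubular-neighborhood theorem; the vanishing of $\Hess\sdist_\scrH(x)$ will be forced by the special construction of $\scrH$ as an exponential image of a linear subspace.

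First, since $\exp_x$ is a diffeomorphism on $O_x$, the set $\scrH$ is a smoothly embedded codimension-one submanifold of $\mms$ through $x$, with tangent space $\xi^{\perp}\subset T_x\mms$ at $x$ and smooth unit normal field extending $\xi$ near $x$. By the tubular-neighborhood theorem, the normal exponential map of $\scrH$ is a diffeomorphism from some open neighborhood of the zero section in the normal bundle of $\scrH$ onto an open set $U\subset\exp_x(O_x)$ containing $x$. Shrinking $U$ so that $\sgn\langle\xi,\exp_x^{-1}y\rangle$ is constant on each connected component of $U\setminus\scrH$, the function $\sdist_\scrH$ coincides on $U$ with the signed arc length of the unique unit-speed normal geodesic joining the foot-point on $\scrH$ to $y$. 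Smoothness of the normal exponential chart then yields $\sdist_\scrH\in\Cont^\infty(U)$ and $\nabla\sdist_\scrH(x)=\xi$, while the fact that $\sdist_\scrH$ measures arc length along its unit-speed gradient flow lines gives the eikonal identity $|\nabla\sdist_\scrH|\equiv 1$ on $U$.

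To prove $\Hess\sdist_\scrH(x)=0$, fix normal coordinates $(y^1,\dots,y^n)$ at $x$ adapted to an orthonormal basis $(e_1,\dots,e_n)$ with $e_1=\xi$, and set $f:=\sdist_\scrH$. Since in normal coordinates $g_{ij}(0)=\delta_{ij}$ and the Christoffel symbols vanish at $0$, the Hessian at $x$ reduces to the Euclidean one, $\Hess f(x)(\partial_i,\partial_j)=\partial_i\partial_j f(0)$. Three observations then pin down every second partial. First, for small $\eta\in\xi^{\perp}$ one has $\exp_x\eta\in\scrH$ by construction, so $f$ vanishes identically on the coordinate hyperplane $\{y^1=0\}$; hence $\partial_i f(0)=0$ and $\partial_i\partial_j f(0)=0$ for $i,j\in\{2,\dots,n\}$. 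Second, along the normal geodesic $t\mapsto\exp_x(t\xi)$, which has coordinates $(t,0,\dots,0)$, the foot point on $\scrH$ is $x$, so $f(\exp_x(t\xi))=|t|\,\sgn(t)=t$, giving $\partial_1 f(0)=1$ and $\partial_1^2 f(0)=0$. Third, differentiating the eikonal identity $\sum_{k,\ell}g^{k\ell}(y)\,\partial_k f(y)\,\partial_\ell f(y)\equiv 1$ in direction $\partial_i$ and evaluating at $0$ --- using that $\partial_i g^{k\ell}(0)=0$ in normal coordinates --- yields $2\,\partial_1 f(0)\,\partial_i\partial_1 f(0)=0$, and hence $\partial_i\partial_1 f(0)=0$ for every $i\geq 2$. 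Combined, these force $\partial_i\partial_j f(0)=0$ for all $i,j$, so $\Hess f(x)=0$.

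The only point that requires genuine care is the initial set-up of $U$: one must shrink to an actual tubular neighborhood of $\scrH$ on which the nearest-point projection is unique and smooth, so that $\sdist_\scrH$ is smooth \emph{across} $\scrH$ (not merely on each side), and on which the sign convention based on $\sgn\langle\xi,\exp_x^{-1}y\rangle$ is consistent with the side of $\scrH$ occupied by $y$. Both are routine consequences of recognizing $\scrH$ as a $C^\infty$-embedded hypersurface with a globally defined smooth unit normal near $x$ extending $\xi$, which is automatic from the diffeomorphism $\exp_x\colon O_x\to\exp_x(O_x)$. Once these identifications are in place, the computation above is mechanical.
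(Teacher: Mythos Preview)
Your proof is correct. Note, however, that the paper does not actually prove this lemma: it is stated as a \texttt{citelemma} and attributed to \cite[Lemma~3.2]{vonrenesse2005}, so there is no in-paper argument to compare against. Your approach via the tubular-neighborhood theorem for the smoothness and eikonal identity, followed by a normal-coordinate computation exploiting that $\scrH$ is exactly the coordinate hyperplane $\{y^1=0\}$ and that the eikonal equation kills the remaining mixed partials, is a clean and self-contained verification of the cited result.
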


\begin{proof}[Proof of ``\textnormal{(ii)} $\Longrightarrow$ \textnormal{(i)}'' in Theorem \ref{Th:Equivalences}] Let $x\in \mms$, and let $\xi\in T_x\mms$ obey $\vert\xi\vert = 1$. In the notation from Lemma \ref{Le:Nice function}, consider the function $\smash{f:=\rho_\scrH^\pm}$ provided therein. By Lemma \ref{Le:Nice function}, Bochner's formula \eqref{Eq:Bochner formula} and the chain rule for $\Delta$, we have
\begin{align*}
\Ric(x)(\xi,\xi) &= \Delta\frac{\vert\nabla f\vert^2(x)}{2} -\big\langle\nabla\Delta f(x),\nabla f(x)\big\rangle\\
&= \vert\nabla f\vert\,\Delta\big\vert\nabla f(x)\vert + \big\vert\nabla\vert\nabla f\vert(x)\big\vert^2 -\big\langle\nabla\Delta f(x),\nabla f(x)\big\rangle\\
&\geq k(x)\,\vert\nabla f(x)\vert^2 = k(x).\textcolor{white}{\frac{1}{2}}
\end{align*}
The arbitrariness of $\xi$ concludes the proof.
\end{proof}

\begin{remark} In the weighted setting outlined in Section \ref{Extensions} -- retaining the notation therein -- the only essential change needed to modify the previous proof is to replace the unweighted Bochner identity \eqref{Eq:Bochner formula} by its weighted counterpart
	\begin{align*}
		\Delta^\Phi \frac{\vert\nabla f\vert^2}{2} = \langle\nabla\Delta^\Phi f,\nabla f\rangle + \vert\!\Hess f\vert^2 + \Ric^\Phi(\nabla f,\nabla f),
	\end{align*}
	where $\Delta^\Phi := \Delta-2\,\langle\nabla\Phi,\nabla\cdot\rangle$ and $\Ric^\Phi := \Ric + 2\Hess\Phi$. The latter follows from \eqref{Eq:Bochner formula}, the definition of $\Hess \Phi$ and metric compatibility of $\nabla$, see e.g.~page 28 in \cite{petersen}:
	\begin{align*}
		2\Hess\Phi(\nabla f,\nabla f) &= 2\,\big\langle \nabla_{\nabla f}\nabla\Phi,\nabla f\big\rangle\\
		&= 2\,\big\langle\nabla \langle\nabla\Phi,\nabla f\rangle,\nabla f\big\rangle - \big\langle\nabla\Phi,\nabla\vert\nabla f\vert^2\big\rangle.
	\end{align*}
	The chain rule for $\Delta^\Phi$ is analogous to the one for $\Delta$.
\end{remark}

\subsection{From lower Ricci bounds to pathwise couplings}\label{Sec:(i) => (iii)}

We start with the existence of a suitable coupling of Brownian motions under the inequality $\Ric\geq k$ on $\mms$, also assuming \eqref{Eq:Exp integr assumption} in this section. (Note that the stochastic completeness of $\mms$ is already known by Theorem \ref{Th:First theorem}.) The coupling technique is well-known and called \emph{coupling by parallel displacement}, see \cite{cranston1991, kendall1986, wang2005, wang2014} and the references therein.  See also \cite{wang1994} for a ``local'' treatise on regular subdomains.

We first collect some notation. Denote by $\Cut_v$ the cut-locus of $v\in\mms$, by $\mathrm{diag}$ the diagonal of $\mms\times\mms$, and by $\rmR$ the Riemannian curvature tensor of $\mms$. Abbreviate $d := \dim(\mms)$ and define $\smash{\Cut := \big\lbrace (u,v) \in \mms\times\mms : u\in \Cut_v\big\rbrace}$. Given any $(u,v)\in (\mms\times\mms)\setminus(\mathrm{diag}\cup\Cut)$, let $J_1,\dots,J_{d-1}$ be Jacobi fields along the unique minimal geodesic $\gamma\colon [0,\met(u,v)] \to \mms$ from $u$ to $v$ such that $\{J_1(s),\dots,J_{d-1}(s),\dot{\gamma}_s\}$ is an orthonormal basis of $T_{\gamma_s}\mms$ both for $s=0$ as well as $s=\met(u,v)$. Define the \emph{index form} by
\begin{equation*}
I(u,v) := \sum_{i=1}^{d-1}\int_0^{\met(u,v)} \Big(\big\vert\nabla_{\dot{\gamma}_s}J_i(s)\big\vert^2 - \big\langle \rmR(\dot{\gamma}_s, J_i(s))\dot{\gamma}_s, J_i(s)\big\rangle\Big)\d s.
\end{equation*}

\begin{theorem}\label{Thm:Cranston} For every $x,y\in\mms$ with $x\neq y$, there exists a coupling $(\B^x,\B^y)$ of Brownian motions on $\mms$ starting in $(x,y)$ which coincide past their \emph{coupling time}
	\begin{align*}
	T(\B^x,\B^y) := \inf\!\big\lbrace t\geq 0 : X_t^x = X_t^y\big\rbrace
	\end{align*}
such that for every $I'\in \Cont(\mms\times\mms)$ for which $I'\geq I$ holds outside $\mathrm{diag}\cup \Cut$, before $T(\B^x,\B^y)$ we have
\begin{equation*}
\rmd \met\big(\B_t^x,\B_t^y) \leq \frac{1}{2}\,I'\big(\B_t^x,\B_t^y\big)\d t.
\end{equation*}
\end{theorem}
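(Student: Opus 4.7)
My approach is the classical \emph{coupling by parallel displacement} due to Kendall and Cranston. Set $O := (\mms\times\mms)\setminus(\mathrm{diag}\cup\Cut)$; on $O$ the minimizing geodesic $\gamma_{u,v}$ from $u$ to $v$ is unique and depends smoothly on $(u,v)$, so parallel transport along it provides a smoothly varying orthogonal isomorphism $m_{u,v}\colon T_u\mms\to T_v\mms$. The proof then has three conceptually distinct steps: constructing the coupling on the product frame bundle, identifying the drift of $\met(\B^x_t,\B^y_t)$ off the cut-locus with the index form, and passing through the cut-locus.

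First, I would construct the coupling as the solution to a horizontal Stratonovich SDE on the product orthonormal frame bundle $O(\mms)\times O(\mms)$: $\B^x$ is obtained as the stochastic development of a standard $\bbR^d$-valued Brownian motion $W$, while $\B^y$ is driven by the same $W$ composed, at each instant, with the frame comparison induced by $m_{\B^x_t,\B^y_t}$. Smoothness of the coefficients on $O$ gives strong solvability up to the first hitting time of $\mathrm{diag}\cup\Cut$. At the coupling time $T(\B^x,\B^y)$ I identify the two processes so that they agree afterwards.

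Second, on $\{(\B^x_t,\B^y_t)\in O\}$ the function $\met$ is smooth in a neighborhood of $(\B^x_t,\B^y_t)$ and It\^o's formula applies. Writing $\nabla^{(i)}\met$ for the gradient in the $i$-th argument and $\pi_\gamma$ for parallel transport along $\gamma_{\B^x_t,\B^y_t}$, the identity $\pi_\gamma\nabla^{(1)}\met = -\nabla^{(2)}\met$ together with the parallel-coupling construction forces the two martingale contributions $\langle\nabla^{(1)}\met,\rmd\B^x_t\rangle$ and $\langle\nabla^{(2)}\met,\rmd\B^y_t\rangle$ to cancel exactly. The drift is then precisely the second variation of arc length along $\gamma_{\B^x_t,\B^y_t}$ evaluated on the parallel orthonormal unit Jacobi fields $J_1,\dots,J_{d-1}$ from the statement, yielding
\begin{equation*}
\rmd\met(\B^x_t,\B^y_t) = \frac{1}{2}\,I(\B^x_t,\B^y_t)\d t\quad\text{on }\{(\B^x_t,\B^y_t)\in O\}.
\end{equation*}

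Third, and this is the main obstacle, I would pass through the cut-locus. The coefficients of the coupling SDE are not well-defined on $\Cut$, $\met$ is only Lipschitz there, and a priori $\met(\B^x_t,\B^y_t)$ could acquire a singular bounded-variation contribution when the pair hits $\Cut$. The essential point to verify is that this contribution is \emph{non-positive}: following Kendall's semimartingale argument, one extends $m_{u,v}$ measurably across $\Cut$ by choosing some minimizing geodesic there and approximates by couplings that stay off a neighborhood of $\Cut$; the non-positivity of the singular part then reflects the fact that $\met$ is a supremum of smooth functions along distinct minimizing geodesics. Since $I'\in\Cont(\mms\times\mms)$ majorizes $I$ on $O$, the off-$\Cut$ identity above and the non-positive cut-locus contribution combine to
\begin{equation*}
\rmd\met(\B^x_t,\B^y_t)\leq \frac{1}{2}\,I'(\B^x_t,\B^y_t)\d t\quad\text{for }t<T(\B^x,\B^y),
\end{equation*}
and the bound is trivial after $T(\B^x,\B^y)$. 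The continuous majorant $I'$ in the hypothesis is tailored precisely to absorb the cut-locus pathology, so the proof should exploit this flexibility rather than try to evaluate $I$ pointwise across $\Cut$.
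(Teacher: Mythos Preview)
The paper does not actually prove this theorem; it simply cites \cite[Theorem 2.1.1, Proposition 2.5.1]{wang2005} and \cite[Theorem 2.3.2]{wang2014} for the construction. Your sketch is precisely the classical Kendall--Cranston parallel-displacement coupling that those references carry out, so in spirit your approach coincides with what the paper invokes.

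One slip worth correcting: in your third step you explain the non-positivity of the singular contribution at $\Cut$ by saying that ``$\met$ is a supremum of smooth functions along distinct minimizing geodesics.'' It is the opposite --- near a (non-conjugate) cut point, $\met(\cdot,v)$ is the \emph{minimum} of the smooth length functionals along the competing minimizing geodesics, hence semiconcave. It is exactly this semiconcavity that forces the local-time/bounded-variation correction in the It\^o--Tanaka expansion to be $\leq 0$; a supremum would give the wrong sign. Also, your phrase ``parallel orthonormal unit Jacobi fields'' is slightly imprecise: the $J_i$ are genuine Jacobi fields whose \emph{boundary values} are related by parallel transport along $\gamma$ (so that they are orthonormal at both endpoints), not parallel vector fields themselves --- the distinction matters since the index form is evaluated on the Jacobi fields, while the \emph{upper bound} later in the paper (inequality \eqref{Eq:Blu}) uses the parallel fields $U_i$ via the index lemma. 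With these two points adjusted, your outline matches the cited construction.
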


The construction of this coupling is thoroughly carried out in Theorem 2.1.1 and (the proof of) Proposition 2.5.1 in \cite{wang2005}, see also \cite[Theorem 2.3.2]{wang2014}. The key to deduce ``\textnormal{(i)} $\Longrightarrow$ \textnormal{(iii)}'' in Theorem \ref{Th:Equivalences} from Theorem \ref{Thm:Cranston} now is to construct an appropriate function $I'\in\Cont(\mms\times\mms)$ with $I'\geq I$ outside $\mathrm{diag}\cup\Cut$, hence circumventing cut-locus issues. This is the place where the definition of $\kk$ enters.

\begin{proof}[Proof of ``\textnormal{(i)} $\Longrightarrow$ \textnormal{(iii)}'' in Theorem \ref{Th:Equivalences}] Let $u,v\in (\mms\times\mms)\setminus (\mathrm{diag}\cup \Cut)$.  As in the proof of \cite[Theorem 2.1.4]{wang2005}, let $U_1,\dots,U_{d-1}$ be parallel vector fields along $\gamma$ such that $\{U_1(s),\dots,U_{d-1}(s),\dot{\gamma}_s\}$ is an orthonormal basis of $\smash{T_{\gamma_s}\mms}$ for every $s\in [0,\met(u,v)]$. By the index lemma \cite[Lemma 1.21]{cheeger1975},  we have
\begin{align}\label{Eq:Blu}
\begin{split}
I(u,v) &\leq -\int_0^{\met(u,v)} \bigg[\sum_{i=1}^{d-1} \big\langle \rmR(\dot{\gamma}_s, U_i(s))\dot{\gamma}_s, U_i(s)\big\rangle \bigg]\d s = -\int_0^{\met(u,v)} \Ric(\gamma_s)(\dot{\gamma}_s,\dot{\gamma}_s)\d s\\
&\leq -\int_0^{\met(u,v)} k(\gamma_s)\d s \leq -\met(u,v)\,\kk(u,v).
\end{split}
\end{align}
As $\kk$ is lower-semicontinuous, a well-known consequence of Baire's theorem yields the existence of a pointwise increasing sequence $(\kk_n)_{n\in\N}$ in $\Cont(\mms\times\mms)$ converging pointwise to $\kk$. Applying Theorem \ref{Thm:Cranston} with $I'$ replaced by $I'_n\in \Cont(\mms\times\mms)$ given by $I'_n(u,v) := -\met(u,v)\,\kk_n(u,v)$ and integrating the resulting differential inequality, a.s.~we have
\begin{align*}
	\met\big(X_t^x,X_t^y\big) \leq \rme^{-\int_s^t \kk_n\left(\B_r^x,\B_r^y\right)/2\d r}\,\met\big(\B_s^x,\B_s^y\big)
\end{align*}
for every $s,t\geq 0$ with $s \leq t$ and for every $n\in\N$. (Recall that $\B^x$ and $\B^y$ coincide past their coupling time.) Letting $n\to\infty$ with the aid of the monotone convergence theorem, we obtain the desired pathwise estimate.
\end{proof}

\begin{remark}\label{Re:index} In the weighted case from Section \ref{Extensions}, the quantity $I$ has to be replaced by its weighted counterpart
	\begin{align*}
	I_\Phi(u,v) := I(u,v) -(\nabla\Phi)\met(\cdot,v)(u) -(\nabla \Phi)\met(\cdot,u)(v).
	\end{align*}
	Theorem \ref{Thm:Cranston} remains true for the corresponding diffusion process, and the weighted adaptation of the estimates \eqref{Eq:Blu} follows the proof of \cite[Theorem 2.1.4]{wang2005}.
\end{remark}

\subsection[From pathwise couplings to the $\Ell^1$-Bochner inequality]{From pathwise couplings to the {\boldmath{$\Ell^1$}}-Bochner inequality}\label{Sec:(iii) => (ii)}

Even if $k$ is smooth, as implicitly discovered in Section \ref{Sec:(i) => (iii)} above, the function $\kk$ from \eqref{Eq:kk definition} is in general only lower semicontinuous. In the current section, we shall need to bypass this lack of continuity by approximation through \emph{Lipschitz} functions. To this aim, the following fact, in which Lipschitz continuity on $\mms\times\mms$ is understood w.r.t.~the product metric $\met_2$ given by $\smash{\met_2^2\big((x,y),(x',y')\big) := \met^2(x,x') + \met^2(y,y')}$, is helpful.

\begin{lemma}\label{Le:Approximation lemma} Let $D\subset\mms$ be a compact subset. Then, in $D\times D$, $\kk$ is the pointwise limit of a pointwise increasing sequence of functions in $\Lip_\bounded(\mms\times\mms)$ which are everywhere not smaller than $\inf \kk(D\times D)$.
\end{lemma}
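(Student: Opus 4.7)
My plan is a truncated Moreau--Yosida inf-convolution. The only subtle point is to keep the approximants bounded on all of $\mms\times\mms$, not merely finite, so I introduce a uniform truncation from above.

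First I would show that $\kk$ is bounded on $D\times D$. Fix $x_0\in D$; by geodesic completeness (Hopf--Rinow), the closed ball $\tilde D := \overline{\Ball}_{2\,\mathrm{diam}(D)}(x_0)$ is compact. Any minimizing geodesic with endpoints in $D$ has length at most $\mathrm{diam}(D)$ and starts within distance $\mathrm{diam}(D)$ of $x_0$, hence stays inside $\tilde D$. Continuity of $k$ gives $M := \sup_{\tilde D}|k|<\infty$, so $|\kk(u,v)|\leq M$ for every $(u,v)\in D\times D$. Lower semicontinuity of $\kk$ on the compact set $D\times D$ then also ensures that $K := \inf\kk(D\times D) \in[-M,M]$ is attained.

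Next, for $n\in\N$ I would define
\begin{equation*}
\kk_n(u,v) := \min\!\bigg\{M,\ \inf_{(u',v')\in D\times D}\!\Big[\kk(u',v') + n\,\met_2\big((u,v),(u',v')\big)\Big]\bigg\}.
\end{equation*}
The required properties then follow by routine checks: (a) $K \leq \kk_n \leq M$ globally, since $\kk(u',v')\geq K$ inside the infimum; (b) the inner infimum is $n$-Lipschitz w.r.t.~$\met_2$ by the standard triangle-inequality argument, and truncation with the constant $M$ preserves $n$-Lipschitz continuity, so $\kk_n\in\Lip_\bounded(\mms\times\mms)$; (c) the integrand $\kk(u',v')+n\,\met_2(\cdots)$ is nondecreasing in $n$, hence so is the infimum, and min with $M$ preserves monotonicity; (d) the lower bound $\kk_n\geq K$ is part of (a).

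For pointwise convergence on $D\times D$, fix $(u,v)\in D\times D$. Plugging $(u',v')=(u,v)$ into the infimum yields $\kk_n(u,v)\leq \kk(u,v)\leq M$. For the reverse inequality, given $\epsilon>0$ lower semicontinuity of $\kk$ at $(u,v)$ supplies $\delta>0$ with $\kk(u',v')>\kk(u,v)-\epsilon$ for $(u',v')\in D\times D$ within distance $\delta$ of $(u,v)$; outside this ball, the integrand exceeds $K+n\delta$, which dominates $\kk(u,v)-\epsilon$ once $n$ is large, so the inner infimum is eventually $\geq \kk(u,v)-\epsilon$, and the truncation by $M$ is inactive since the limit lies in $[K,M]$. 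The \emph{main obstacle} is precisely the boundedness on $\mms\times\mms$: the naive inf-convolution over the whole product would grow linearly with distance from $D\times D$. Restricting the infimum to the compact set $D\times D$ and truncating with the single constant $M$ simultaneously resolves this and leaves both the Lipschitz control and the pointwise convergence on $D\times D$ intact.
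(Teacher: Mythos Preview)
Your proof is correct and follows essentially the same route as the paper, which simply cites the Moreau--Yosida inf-convolution construction from \cite[p.~107]{ags05} applied on $(D\times D,\met_2)$. Your argument is in fact more explicit than the paper's one-line citation: you spell out why $\kk$ is bounded on $D\times D$ and you handle the global boundedness on $\mms\times\mms$ directly via the truncation with $M$, a point the paper leaves implicit.
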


A purely metric construction of the claimed sequence can be found e.g.~at page 107 in \cite{ags05}, replacing $(X,d)$ by $(D\times D,\met_2)$ therein. In particular, note that since $k(x) = \kk(x,x)$ for every $x\in\mms$, in $D$, $k$ is the pointwise limit of a pointwise increasing sequence of functions in $\Lip_\bounded(\mms)$ which are everywhere not smaller than $\inf k(D)$.

%\begin{proof} By a purely metric argument, every lower semicontinuous, lower bounded function on $\mms\times\mms$ can be approximated pointwise on $\mms\times\mms$ by a pointwise increasing sequence of functions in $\Lip_\bounded(\mms\times\mms)$ which preserves uniform lower bounds, see e.g.~page 107 in \cite{ags05}. If $\kk$ is not uniformly bounded from below, we apply the previous result to the function $\underline{\ell}\colon \mms\times\mms\to\R$ given by $\underline{\ell}(x,y) := \kk(x,y)\,\One_{D\times D}(x,y) + \inf\kk(D\times D)\, \One_{(D \times D)^\rmc}(x,y)$.
%
%The second statement follows  by noting that $k(x) = \kk(x,x)$ for every $x\in\mms$.
%\end{proof}

The step from the pathwise coupling property w.r.t.~$k$ towards \eqref{Eq:1-Bochner} requires a nontrivial extension of the arguments for \cite[Theorem 5.17]{braun2019} (which adapt the duality argument from \cite{kuwada2010} to the case of synthetic variable Ricci bounds and make crucial use of uniform lower boundedness of the Ricci curvature) for short times instead of fixed ones. This kind of localization argument was indeed used in \cite{braun2019} in different variants at different instances. For this, a certain short-time behavior of Brownian motion as subsequently recorded plays a crucial role.

Given any $x\in\mms$ and $\varepsilon> 0$, let $\smash{\tau^x_\varepsilon}$ be the first exit time of Brownian motion starting in a fixed $x\in\mms$ from $\Ball_\varepsilon(x)$. The following estimate for $\smash{\tau^x_\varepsilon}$ is a variant of \cite[Lemma 2.1.4]{wang2014},  noting that by Laplacian comparison, compare with \cite[Corollary 3.4.4, Corollary 3.4.5, Theorem 3.6.1]{hsu2002}, the constant $c_1$ therein can be chosen uniformly in $x$ off its respective cut-locus as long as long as $x$ belongs to a compact subset of $\mms$. (An analogous version of Lemma \ref{Le:Hsu lemma} holds for general gradient diffusions, taking -- in the notation of Section \ref{Extensions} -- into account the continuity of $\nabla\Phi$.)

\begin{lemma}\label{Le:Hsu lemma} For every compact $D\subset\mms$ and every $\varepsilon>0$, there exists a constant $c>0$ such that 
\begin{equation*}
\bbP\big[\tau^x_\varepsilon \leq t\big] \leq \rme^{-c/t}\forevery{x\in D,\ t \in (0,1]}.
\end{equation*}
\end{lemma}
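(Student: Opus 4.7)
The plan is to revisit the Itô--Kendall argument underlying the single-point version of this exit-time estimate (such as \cite[Lemma 2.1.4]{wang2014}) and verify that all constants appearing in it depend on the starting point $x$ only through a local lower Ricci bound, which by compactness of $D$ can be made uniform over $x\in D$.

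First I would pass to the closed $\varepsilon$-tube $D_\varepsilon := \{y\in\mms : \met(y,D)\leq\varepsilon\}$, which is compact by geodesic completeness. Continuity of $\Ric$ on the compact set $D_\varepsilon$ yields a constant $K\in\R$ with $\Ric\geq K\Rmet$ on $D_\varepsilon$, and the Laplacian comparison theorem (\cite[Corollary 3.4.4, Corollary 3.4.5, Theorem 3.6.1]{hsu2002}) then supplies a bound $\Delta\met(\cdot,x)(y)\leq (d-1)\alpha_K(\met(y,x))$ with $d:=\dim(\mms)$ and $\alpha_K$ the explicit radial profile of the constant-curvature model, valid for every $x\in D$ and every $y\in\Ball_\varepsilon(x)\setminus(\{x\}\cup\Cut_x)$. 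Crucially, the product $r\mapsto r(d-1)\alpha_K(r)$ is bounded on $(0,\varepsilon]$ by a constant $\Lambda=\Lambda(\varepsilon,K,d)$ that is \emph{independent} of the center $x\in D$.

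Next I would work with the squared radial process $r_t^2$, where $r_t:=\met(\B_t^x,x)$, in order to absorb the $1/r$ singularity of $\alpha_K$ at $0$. Itô's formula, together with the Kendall--Cranston treatment of the cut-locus (which contributes only a non-positive singular term to the drift), yields on $[0,\tau_\varepsilon^x)$
\[
r_t^2 \leq M_t + (1+\Lambda)\,t, \qquad M_t := 2\int_0^t r_s\,\rmd\beta_s,
\]
where $\beta$ is a standard one-dimensional Brownian motion and $M$ is a continuous local martingale with $[M]_t\leq 4\varepsilon^2 t$ on $[0,\tau_\varepsilon^x)$. On $\{\tau_\varepsilon^x\leq t\}$, path-continuity forces $\sup_{s\leq t\wedge\tau_\varepsilon^x} r_s^2=\varepsilon^2$; hence for $t$ small enough that $(1+\Lambda)t\leq\varepsilon^2/2$ one must have $\sup_s M_{s\wedge\tau_\varepsilon^x}\geq\varepsilon^2/2$. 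The exponential Bernstein-type bound for continuous martingales with bounded quadratic variation (applied to $\rme^{\lambda M_\cdot-\lambda^2[M]_\cdot/2}$ via Doob's maximal inequality and optimized in $\lambda$) then delivers $\bbP[\tau_\varepsilon^x\leq t]\leq \rme^{-\varepsilon^2/(32t)}$; the remaining range $t\in(0,1]$ is handled by shrinking $c$. Since $\Lambda$, and hence the final constant $c$, depends on $x$ only through $D,\varepsilon,d,K$, the estimate is uniform in $x\in D$.

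The main obstacle is the cut-locus handling in the preceding step: one has to justify the semimartingale decomposition of $r_t$ in the presence of $\Cut_x$ and ensure that the singular correction coming from the local time at the cut-locus has the favorable sign, so that it can be dropped from the upper bound on $\rmd(r_t^2)$. This is the classical Kendall--Cranston mechanism (lifting $\B^x$ to the orthonormal frame bundle with an angular correction, or smoothing $\met(\cdot,x)$ from outside a shrinking neighborhood of $\Cut_x$), and it is precisely at this point that the hint preceding the lemma --- that Laplacian comparison suffices off $\Cut_x$ with a constant uniform in the center $x\in D$ --- is invoked.
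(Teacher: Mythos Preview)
Your proposal is correct and follows exactly the route the paper indicates: the paper does not give a self-contained proof but refers to \cite[Lemma 2.1.4]{wang2014} and observes that the relevant constant (governing the radial drift via $\Delta\met(\cdot,x)$) can be chosen uniformly over $x\in D$ by Laplacian comparison \cite[Corollary 3.4.4, Corollary 3.4.5, Theorem 3.6.1]{hsu2002} on the compact tube $D_\varepsilon$. Your write-up supplies precisely these details --- the uniform Ricci lower bound on $D_\varepsilon$, the resulting bound on $r\Delta r$, the It\^o--Kendall decomposition of $r_t^2$ with the favorable sign of the cut-locus local time, and the exponential martingale tail estimate --- so the approaches coincide.

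One small point: your sentence ``the remaining range $t\in(0,1]$ is handled by shrinking $c$'' implicitly uses that $\sup_{x\in D}\bbP[\tau_\varepsilon^x\leq 1]<1$, which deserves a word of justification (e.g.\ positivity of the Dirichlet heat kernel on each $\Ball_\varepsilon(x)$ together with a compactness/continuity argument in $x$, or an iteration of the small-time bound over a slightly enlarged compact set). This is routine and does not affect the argument.
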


%\begin{proof} We only sketch the proof. Let $D_{2\varepsilon}$ be the $2\varepsilon$-neighborhood of $D$. By Laplacian comparison \cite[Theorem 1.2.3]{wang2014}, there exist two constants $C_1,C_2 > 0$ depending only on $\dim(\mms)$ and the Ricci curvature of $\smash{\overline{D}_{2\varepsilon}}$ such that $\Delta\rho^2(x,\cdot) \leq C_1 + C_2\,\rho^2(x,\cdot) \leq C_1 + C_2\,(\mathrm{diam}\, \overline{D}_{2\varepsilon})^2$ on $D_a\setminus \Cut_x$ for every $x\in D$. Now, we proceed as for \cite[Lemma 2.1.4]{wang2014}, keeping in mind the interpretation of the Laplacian term in Kendall's Itô formula \cite[Theorem 1.1, Corollary 1.2]{kendall1987} for the radial part of Brownian motion as zero at times when the latter hits the cut-locus of its starting point.
%\end{proof}

\begin{proof}[Proof of ``\textnormal{(iii)} $\Longrightarrow$ \textnormal{(ii)}'' in Theorem \ref{Th:Equivalences}] \textbf{Step 1.} \textit{Initial preparations.} Let $\smash{f\in\Cont_\comp^\infty(\mms)}$ and $x\in\mms$ with $\vert \nabla f(x)\vert \neq 0$ be arbitrary. Let $\varepsilon \in (0,1/4]$, which is kept fixed throughout this proof, be such that $\vert\nabla f\vert$ is bounded away from zero -- in particular smooth -- on $\smash{\overline{\Ball}_{4\varepsilon}(x)}$. Moreover, let $\gamma$  be the unique geodesic starting in $x$ with initial velocity $\dot{\gamma}_0 = \nabla f(x)/\vert\nabla f(x)\vert$. The continuity of $k$ yields $k\geq K$ on $\smash{\overline{\Ball}_{6}(x)}$ for some negative real number $K$. Define the set of  points in $\mms$ with distance at most $1$ to $\gamma$ by
\begin{equation*}
D := \bigcup_{s\in [0,1]} \overline{\Ball}_1(\gamma_s).
\end{equation*}
% \big\lbrace z\in \mms : z \in \overline{\Ball}_1(\gamma_s)\text{ for some }s\in [0,1]\big\rbrace$ 
By the definition \eqref{Eq:kk definition} of $\kk$ and since $\met(x,\gamma_s) \leq 1$ for every $s\in [0,1]$, we have
\begin{equation}\label{Eq:kk bound}
\kk\geq K\quad\text{on } D\times D.
\end{equation}
Finally, let $\kkk\in\Lip_\bounded(\mms\times\mms)$ be any function obeying $K\leq \kkk \leq \kk$ on $D\times D$ as provided by Lemma \ref{Le:Approximation lemma}.

\textbf{Step 2.} \textit{Rewriting the quantities to consider.} The key idea to derive the $\Ell^1$-Bochner inequality \eqref{Eq:1-Bochner} for $f$ from the given pathwise coupling estimates is to consider certain difference quotients of the map $(t,s)\mapsto\ChHeat_tf(\gamma_s)$ near $(0,0)$, and to express the involved heat semigroups in terms of coupled Brownian motions. To address the first point, we note that, given $t>0$, by the smoothness of  $s\mapsto \ChHeat_tf(\gamma_s)$ on $[0,\infty)$, Taylor's theorem in its mean value remainder form and the geodesic equation for $\gamma$, given any $s>0$ there exists $\nu\in [0,s]$ such that
\begin{align*}
\ChHeat_t f(\gamma_s) - \ChHeat_t f(x) = s\,\big\langle \nabla \ChHeat_t f(x),\dot{\gamma}_0\big\rangle + \frac{s^2}{2}\Hess\ChHeat_tf(\gamma_\nu)\big(\dot{\gamma}_\nu,\dot{\gamma}_\nu\big).
\end{align*}
Dividing by $s$, subtracting $\big\langle\nabla f(x),\dot{\gamma}_0\big\rangle$ and dividing by $t$, respectively, yields
\begin{align}\label{Eq:Important limit justification}
\begin{split}
&\frac{1}{t}\,\Big[\,\frac{1}{s}\,\big[\ChHeat_tf(\gamma_s) - \ChHeat_tf(x)\big] - \big\langle\nabla f(x),\dot{\gamma}_0\big\rangle\Big]\\
&\qquad\qquad = \frac{1}{t}\,\big\langle \nabla \ChHeat_tf(x) - \nabla f(x),\dot{\gamma}_0\big\rangle + \frac{s}{2t}\Hess\ChHeat_tf(\gamma_\nu)\big(\dot{\gamma}_\nu,\dot{\gamma}_\nu\big).
\end{split}
\end{align}
%We shall also keep in mind that $s = \met(x,\gamma_s)$ for every $s \in (0,\mathrm{inj}_x)$, where $\mathrm{inj}_x$ is the injectivity radius of $x$.

To now invoke the coupled Brownian motions, given $s\in (0,1]$ let us denote by $(\B^x,\B^{\gamma_s})$ a process starting in $(x,\gamma_s)$ given by the pathwise coupling property w.r.t.~$k$. %This pair process still depends on $s$, but we suppress this dependency from the notation. 
Let $\smash{\tau^x_{\varepsilon}}$ and $\smash{\tau^{\gamma_s}_\varepsilon}$ denote the first exit times of the marginal Brownian motions $\B^x$ and $\B^{\gamma_s}$ from $\Ball_\varepsilon(x)$ and $\Ball_\varepsilon(\gamma_s)$, respectively. Since $\overline{\Ball}_\varepsilon(x), \overline{\Ball}_\varepsilon(\gamma_s)\subset D$, for every $s\in [0,1]$ a.s.~on the event $\big\lbrace \tau^x_\varepsilon > t\text{ and } \tau^{\gamma_s}_\varepsilon>t \big\rbrace$ we have
\begin{align}\label{Eq:Pathwise bound maximum}
\met\big(\B_t^x,\B_t^{\gamma_s}\big) \leq \rme^{-\int_0^t\kk\left(\B_r^x,\B_r^{\gamma_s}\right)/2\d r}\,s \leq \rme^{-\int_0^t\kkk\left(\B_r^x,\B_r^{\gamma_s}\right)/2\d r}\,s.
\end{align}
\textbf{Step 3.} \textit{Estimating \eqref{Eq:Important limit justification} via coupled Brownian motions.} Given $\smash{s\in \big(0,\rme^{-1/2}\big]}$, define $t_s := -c/\!\log s^2 \in (0,c]$, where $c>0$ is the constant from Lemma \ref{Le:Hsu lemma} associated to $D$ and $\varepsilon$. Note that $t_s \to 0$ and $s^\alpha/t_s\to 0$ as $s\to 0$ for $\alpha\in \{1/2,1\}$. Consider the events
\begin{align}\label{Eq:Events def}
\begin{split}
A_s &:= \big\lbrace\tau^x_\varepsilon > t_s\text{ and } \tau^{\gamma_s}_\varepsilon > t_s\big\rbrace,\\
V_s &:= A_s \cap \big\lbrace \met\big(\B_{t_s}^x,\B_{t_s}^{\gamma_s}\big)\geq s^{1/2}\big\rbrace,\rule{0cm}{0.6cm}\\
W_s &:= A_s \cap \Big\lbrace \frac{1}{t_s}\int_0^{t_s}\met\big(\B_r^x,\B_r^{\gamma_s}\big)\d r\geq s^{1/2}\Big\rbrace,\rule{0cm}{0.6cm}\\
U_s &:= A_s \cap V_s^\rmc \cap W_s^\rmc.\rule{0cm}{0.45cm}
\end{split}
\end{align}
Since $(t,s) \mapsto \Hess\ChHeat_tf(\gamma_s)(\dot{\gamma}_s,\dot{\gamma}_s)$ is locally bounded on $[0,\infty)\times[0,\infty)$ by joint smoothness of the heat semigroup, by \eqref{Eq:Important limit justification} with $t_s$ in place of $t$ we have
\begin{align*}\label{Eq:Joint smoothness}
&\frac{1}{2}\,\vert\nabla f(x)\vert^{-1}\,\big\langle\nabla \Delta f(x),\nabla f(x)\big\rangle\\
&\qquad\qquad = \lim_{s\downarrow 0} \frac{1}{t_s}\,\big\langle\nabla\ChHeat_{t_s}f(x) - \nabla f(x),\dot{\gamma}_0\big\rangle\\
&\qquad\qquad \leq \limsup_{s\to 0} \frac{1}{t_s}\,\Big[\,\frac{1}{s}\,\big[\ChHeat_{t_s}f(\gamma_s) - \ChHeat_{t_s}f(x)\big] - \big\langle\nabla f(x),\dot{\gamma}_0\big\rangle\Big]\\
&\qquad\qquad \leq \limsup_{s\downarrow 0} \frac{1}{t_s}\, \Big[\,\frac{1}{s}\, \bbE\big[\big\vert f(\B_{t_s}^x) - f(\B_{t_s}^{\gamma_s})\big\vert\big] - \vert\nabla f(x)\vert\Big]\\
&\qquad\qquad = \limsup_{s\downarrow 0} \frac{1}{t_s}\, \Big[\,\frac{1}{s}\, \bbE\big[\big\vert f(\B_{t_s}^x) - f(\B_{t_s}^{\gamma_s})\big\vert\,\big(\One_{V_s} + \One_{W_s} + \One_{U_s} + \One_{A_s^\rmc}\big)\big] - \vert\nabla f(x)\vert\Big].
\end{align*}
Now we estimate the contributions of the events defined in \eqref{Eq:Events def} separately. 

\textbf{Step 3.1.} The contribution of $A_s^\rmc$ becomes negligible thanks to
\begin{align*}
&\limsup_{s\downarrow 0} \frac{1}{t_s\,s}\,
\bbE\big[\big\vert f(\B_{t_s}^x) - f(\B_{t_s}^{\gamma_s})\big\vert\,\One_{A_s^\rmc}\big]\\
&\qquad\qquad\leq \limsup_{s\downarrow 0}\frac{2\, \Vert f\Vert_{\Ell^\infty}}{t_s\,s}\,\bbP\big[A_s^\rmc\big]\\
&\qquad\qquad \leq \limsup_{s\downarrow 0}\frac{2\,\left\|f\right\|_{L^{\infty}}}{t_s\,s}\,\big[\bbP\big[\tau^x\leq t_s\big] + \bbP\big[\tau^{\gamma_s} \leq t_s\big]\big]\\
&\qquad\qquad \leq \limsup_{s\downarrow 0} \frac{4\, \Vert f\Vert_{\Ell^\infty}}{t_s\,s}\, \rme^{-c/t_s} = \limsup_{s\downarrow 0} \frac{4\,\Vert f\Vert_{\Ell^\infty}}{t_s}\,s =0,
\end{align*}
where the last inequality is granted by Lemma \ref{Le:Hsu lemma}. 

\textbf{Step 3.2.} Furthermore, by \eqref{Eq:Pathwise bound maximum} and \eqref{Eq:kk bound}, the contribution of $V_s$ is controlled by
\begin{align*}
&\limsup_{s\downarrow 0} \frac{1}{t_s\,s}\,\bbE\big[\big\vert f(\B_{t_s}^x) - f(\B_{t_s}^{\gamma_s})\big\vert\,\One_{V_s}\big]\\
&\qquad\qquad = \limsup_{s\downarrow 0} \frac{1}{t_s\,s}\,\bbE\bigg[\frac{\big\vert f(\B_{t_s}^x) - f(\B_{t_s}^{\gamma_s})\big\vert}{\met(\B_{t_s}^x,\B_{t_s}^{\gamma_s})}\,\met(\B_{t_s}^x,\B_{t_s}^{\gamma_s})\,\One_{V_s}\bigg] \\
&\qquad\qquad \leq \Lip(f)\,\limsup_{s\downarrow 0} \frac{1}{t_s\,s^{3/2}}\, \bbE\Big[\met(\B_{t_s}^x,\B_{t_s}^{\gamma_s})^2\,\One_{A_s}\Big]\textcolor{white}{\bigg\vert}\\
&\qquad\qquad \leq \Lip(f)\,\limsup_{s\downarrow 0} \frac{s^2}{t_s\,s^{3/2}} \, \bbE\Big[\rme^{-\int_0^{t_s}\kkk\left(\B_r^x,\B_r^{\gamma_s}\right)\d r}\,\One_{A_s}\Big]\textcolor{white}{\bigg\vert}\\
&\qquad\qquad \leq \Lip(f)\,\limsup_{s\downarrow 0} \frac{s^{1/2}}{t_s}\, \rme^{-Kt_s} =0.
\end{align*}

\textbf{Step 3.3.} In a similar way, we can ignore the influence of $W_s$ by
\begin{align*}
&\limsup_{s\downarrow 0} \frac{1}{t_s\,s}\,\bbE\big[\big\vert f(\B_{t_s}^x) - f(\B_{t_s}^{\gamma_s})\big\vert\,\One_{W_s}\big]\\
&\qquad\qquad = \limsup_{s\downarrow 0} \frac{1}{t_s\,s}\,\bbE\bigg[\frac{\big\vert f(\B_{t_s}^x) - f(\B_{t_s}^{\gamma_s})\big\vert}{\met(\B_{t_s}^x,\B_{t_s}^{\gamma_s})}\,\met(\B_{t_s}^x,\B_{t_s}^{\gamma_s})\,\One_{W_s\cap\{X_{t_s}^x \neq X_{t_s}^{\gamma_s}\}}\bigg]\\
&\qquad\qquad \leq \Lip(f)\,\limsup_{s\downarrow 0}\frac{1}{t_s^2\,s^{3/2}}\,\bbE\Big[\!\int_0^{t_s}\!\met\big(\B_{t_s}^x,\B_{t_s}^{\gamma_s}\big)\,\met(\B_r^x,\B_r^{\gamma_s})\,\One_{A_s}\d r\Big]\\
&\qquad\qquad \leq \Lip(f)\,\limsup_{s\downarrow 0} \frac{s^2}{t_s^2\,s^{3/2}}\,\bbE\Big[\!\int_0^{t_s}\rme^{-\int_0^{t_s}\lll\left(X_a^x, X_a^{\gamma_s}\right)/2\d a}\,\rme^{-\int_0^{r}\lll\left(X_a^x, X_a^{\gamma_s}\right)/2\d a}\,\One_{A_s}\d r\Big]\\
&\qquad\qquad \leq \Lip(f)\,\limsup_{s\downarrow 0}\frac{s^{1/2}}{t_s}\,\rme^{-Kt_s} =0.
\end{align*}

\clearpage
\textbf{Step 3.4.} Finally we turn to the most delicate part, namely the study of the effect of $U_s$. To this aim, we first note that, defining the function $\ell\in \Lip_\bounded(\mms)$ by $\ell(x) := \lll(x,x)$, on the event $A_s \cap W_s^\rmc$ we have
\begin{align*}
&\int_0^{t_s}\lll(\B_r^x,\B_r^{\gamma_s})\d r - \int_0^{t_s}\ell(X_r^x)\d r\\
&\qquad\qquad = \int_0^{t_s}\lll(\B_r^x,\B_r^{\gamma_s})\d r - \int_0^{t_s}\lll(X_r^x, X_r^x)\d r\\
&\qquad\qquad \geq -\Lip(\lll)\int_0^{t_s}\met(X_r^x,X_r^{\gamma_s})\d r \geq -\Lip(\lll)\,t_s\,s^{1/2}.
\end{align*}
Together with \eqref{Eq:Pathwise bound maximum} and since $\met(\B_{t_s}^x,\B_{t_s}^{\gamma_s}) < s^{1/2}$ on $A_s\cap V_s^\rmc$, we thus obtain
\begin{align*}
&\limsup_{s\downarrow 0} \frac{1}{t_s}\,\Big[\,\frac{1}{s}\,\bbE\big[\big\vert f(\B_{t_s}^x) - f(\B_{t_s}^{\gamma_s})\big\vert\,\One_{U_s}\big] - \vert\nabla f(x)\vert\Big]\\
&\qquad\qquad\leq \limsup_{s\downarrow 0} \frac{1}{t_s}\,\bigg[\,\frac{1}{s}\,\bbE\bigg[\met(\B_{t_s}^x,\B_{t_s}^{\gamma_s})\,\frac{\big\vert f(\B_{t_s}^x) - f(\B_{t_s}^{\gamma_s})\big\vert}{\met(\B_{t_s}^x,\B_{t_s}^{\gamma_s})}\,\One_{U_s\cap \{X_{t_s}^x \neq X_{t_s}^{\gamma_s}\}}\bigg] - \vert\nabla f(x)\vert\bigg]\\
&\qquad\qquad \leq \limsup_{s\downarrow 0} \frac{1}{t_s}\,\bigg[\bbE\bigg[\rme^{-\int_0^{t_s}\ell(\B_r^x)/2\d r}\,\rme^{\Lip(\lll)\,t_s\,s^{1/2}/2}\\
&\qquad\qquad\qquad\qquad \times \sup_{z\in \Ball_{s^{1/2}}(\B_{t_s}^x)\setminus\{\B_{t_s}^x\}}\frac{\big\vert f(\B_{t_s}^x) - f(z)\big\vert}{\met(\B_{t_s}^x,z)}\,\One_{A_s}\bigg]  -\vert\nabla f(x)\vert\bigg].
\end{align*}
For small enough $s>0$, on the event $A_s$ we have $\smash{\Ball_{2s^{1/2}}(\B_{t_s}^x) \subset \Ball_{2\varepsilon}(x)}$. In this case, by applying  the mean value theorem twice,
\begin{align*}
&\sup_{z\in \Ball_{s^{1/2}}(\B_{t_s}^x)\setminus\{\B_{t_s}^x\}}\frac{\big\vert f(\B_{t_s}^x) - f(z)\big\vert}{\met(\B_{t_s}^x,z)}\\
&\qquad\qquad \leq \sup_{y\in \Ball_{2s^{1/2}}(\B_{t_s}^x)}\vert\nabla f(y)\vert\\
&\qquad\qquad \leq \vert\nabla f(\B_{t_s}^x)\vert + \sup_{y\in \Ball_{2s^{1/2}}(\B_{t_s}^x)}\big\vert \vert\nabla f(y) \vert - \vert\nabla f(\B_{t_s}^x)\vert\big\vert\\
&\qquad\qquad \leq \vert\nabla f(\B_{t_s}^x)\vert + 2\,s^{1/2}\sup_{v\in \overline{\Ball}_{4\varepsilon}(x)} \big\vert\nabla\vert\nabla f\vert(v)\big\vert.
\end{align*}
\clearpage
\noindent Let $\psi\in\Cont_\comp^\infty(\mms)$ be nonnegative with $\psi = \vert\nabla f\vert$ on $\Ball_{2\varepsilon}(x)$.  
Invoking the dominated convergence theorem, \eqref{Eq:Formula Pt BM} and the smoothness of the heat semigroup up to zero, the terms containing $s^{1/2}$ above become negligible as $s\downarrow 0$, and we are left with
\begin{align*}
&\limsup_{s\downarrow 0} \frac{1}{t_s}\,\Big[\,\frac{1}{s}\,\bbE\big[\big\vert f(\B_{t_s}^x) - f(\B_{t_s}^{\gamma_s})\big\vert\,\One_{U_s}\big] - \vert\nabla f(x)\vert\Big]\\
&\qquad\qquad \leq \limsup_{s\downarrow 0} \frac{1}{t_s}\,\Big[\bbE\Big[\rme^{-\int_0^{t_s}\ell(\B_r^x)/2\d r}\,\vert\nabla f(\B_{t_s}^x)\vert\,\One_{A_s}\Big] - \vert\nabla f(x)\vert\Big]\\
&\qquad\qquad = \limsup_{s\downarrow 0} \frac{1}{t_s}\,\Big[\bbE\Big[\rme^{-\int_0^{t_s}\ell(\B_r^x)/2\d r}\,\psi(\B_{t_s}^x)\,\One_{A_s}\Big] - \psi(x)\Big]\\
&\qquad\qquad \leq \limsup_{s\downarrow 0} \frac{1}{t_s}\,\big[\bbE\big[\psi(\B_{t_s}^x)\big] - \psi(x)\big] \\
&\qquad\qquad\qquad\qquad + \limsup_{s\downarrow 0} \frac{1}{t_s}\,\bbE\Big[\big[\rme^{-\int_0^{t_s}\ell(\B_r^x)/2\d r} - 1\big]\,\psi(\B_{t_s}^x)\Big]\\
&\qquad\qquad = \frac{1}{2}\,\Delta \psi(x) - \frac{1}{2}\,\ell(x)\,\psi(x) = \frac{1}{2}\,\Delta \vert\nabla f(x)\vert - \frac{1}{2}\,\ell(x)\,\vert\nabla f(x)\vert,
\end{align*}
where in second last identity, we used that the marginal law of $\B^x$  is independent of $s$. Since $\kkk$ was arbitrary, we conclude \eqref{Eq:1-Bochner} by Lemma \ref{Le:Approximation lemma}. 
\end{proof}

\begin{remark}\label{Re:Ru} Without \eqref{Eq:Exp integr assumption}, a careful inspection of the previous proof shows that if (iii) in Theorem \ref{Th:Equivalences} holds for any symmetric lower semicontinuous function $\kk\colon \mms\times\mms \to \R$ which is not necessarily the average of some function as in \eqref{Eq:kk definition}, then the $\Ell^1$-Bochner inequality holds for the function $k\colon \mms\to\R$ defined by $k(x) := \kk(x,x)$.
\end{remark}

\appendix

\section[Kato decomposable lower Ricci bounds and their Schrödinger semigroups]{Kato decomposable lower Ricci bounds and their\\Schrödinger semigroups}\label{Sec:Kato}

\subsection[The $\Ell^1$-gradient estimate]{The {\boldmath{$\Ell^1$}}-gradient estimate}\label{Sec:2}

In this section, we present a last equivalent characterization of the condition $\Ric\geq k$ on $\mms$ for the  class of Kato decomposable $k$ in terms of gradient estimates for $(\ChHeat_t)_{t\geq 0}$. A similar result can be found in \cite[Corollary 2.2]{wu2020}. See also \cite[Theorem 2.3.1]{wang2014} for more geometric growth conditions on $k^-$, and  \cite[Theorem 1.1]{braun2019} for the nonsmooth case under boundedness of $k^-$, the condition $\Ric\geq k$ on $\mms$ interpreted in a synthetic sense \cite{sturm2015}.

\begin{theorem}\label{Th:Sc} Assume that $k\colon \mms \to \R$ is a continuous and Kato decomposable function. Then any of the equivalent conditions in Theorem \ref{Th:Equivalences} is equivalent to the \emph{$\Ell^1$-gradient estimate w.r.t.~$k$}, i.e.~for every $f\in C_\comp^\infty(\mms)$,
\begin{equation}\label{TU}
\vert\nabla \ChHeat_t f(x)\vert \leq \bbE\Big[\rme^{-\int_0^t k(\B_r^x)/2\d r}\,\vert\nabla f\vert(\B_t^x)\,\One_{\{t<\zeta^x\}}\Big]\quad\text{for every }x\in\mms,\ t>0.
\end{equation}
\end{theorem}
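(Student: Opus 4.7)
The plan is to prove Theorem \ref{Th:Sc} by closing a loop through the $\Ell^1$-Bochner inequality, namely condition (ii) of Theorem \ref{Th:Equivalences}. Concretely, I would establish $\textnormal{(i)}\Longrightarrow\eqref{TU}\Longrightarrow\textnormal{(ii)}$. Note first that Kato decomposability of $k$ entails \eqref{Eq:Exp integr assumption} via Lemma \ref{Le:Khas}, hence stochastic completeness via Theorem \ref{Th:First theorem}\textnormal{(i)}, so all equivalences in Theorem \ref{Th:Equivalences} are at my disposal throughout.

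For the forward direction, given $f\in C_\comp^\infty(\mms)$ I would apply Theorem \ref{Th:Form FK} to the compactly supported bounded $1$-form $\omega:=\rmd f$. This yields the pointwise bound $|\WHeat_t\rmd f(x)|\leq \bbE[\rme^{-\int_0^t k(\B_r^x)/2\,\rmd r}\,|\rmd f(\B_t^x)|\,\One_{\{t<\zeta^x\}}]$, and combining with the commutation identity $\WHeat_t\rmd f=\rmd\ChHeat_tf$ from \eqref{Eq:Commutation} together with $|\nabla g|=|\rmd g|$ gives \eqref{TU} immediately.

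For the converse, fix $x_0\in\mms$ and a unit vector $\xi\in T_{x_0}\mms$. Lemma \ref{Le:Nice function} provides a smooth function $h$ on some neighborhood $U$ of $x_0$ with $\nabla h(x_0)=\xi$ and $|\nabla h|\equiv 1$ on $U$; after multiplication by a cutoff equal to $1$ near $x_0$ this yields $f\in C_\comp^\infty(\mms)$ coinciding with $h$ on a smaller neighborhood, so that $g:=|\nabla f|$ is smooth and identically $1$ near $x_0$. Setting $u(t):=|\nabla\ChHeat_tf|(x_0)$ and $v(t):=\bbE[\rme^{-\int_0^t k(\B_r^{x_0})/2\,\rmd r}\,g(\B_t^{x_0})\,\One_{\{t<\zeta^{x_0}\}}]$, the estimate \eqref{TU} reads $u(t)\leq v(t)$ with $u(0)=v(0)=1$. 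The chain rule applied to $t\mapsto|\nabla\ChHeat_tf|^2(x_0)$, whose derivative at zero equals $\langle\nabla\Delta f,\nabla f\rangle(x_0)$, together with $|\nabla f|(x_0)\neq 0$, gives $u'(0)=(2|\nabla f|(x_0))^{-1}\,\langle\nabla\Delta f,\nabla f\rangle(x_0)$. Splitting $v(t)-v(0)$ into a Feynman--Kac correction plus a pure heat contribution yields $v'(0)=\tfrac12\Delta g(x_0)-\tfrac12 k(x_0)\,g(x_0)$. The inequality $u'(0)\leq v'(0)$ then rearranges precisely into \eqref{Eq:1-Bochner} at $x_0$; arbitrariness of $x_0$ and $\xi$, combined with the equivalence of (i) and (ii) in Theorem \ref{Th:Equivalences}, would conclude.

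The main obstacle is the rigorous computation of $v'(0)$, specifically verifying that $t^{-1}\bbE[(\rme^{-\int_0^t k(\B_r^{x_0})/2\,\rmd r}-1)\,g(\B_t^{x_0})\,\One_{\{t<\zeta^{x_0}\}}]\to-\tfrac12 k(x_0)\,g(x_0)$ as $t\downarrow 0$. This is exactly where Kato decomposability enters essentially: Khasminskii's lemma (Lemma \ref{Le:Khasminskii}) provides locally uniform bounds on $\sup_{s\in[0,t]}\bbE[\rme^{-p\int_0^s k(\B_r^{x_0})/2\,\rmd r}]$ for every $p\in[1,\infty)$, which combined with the elementary inequality $|\rme^a-1|\leq|a|\,\rme^{|a|}$, H\"older's inequality, and the continuity of $k$ at $x_0$ legitimize the dominated-convergence passage $t^{-1}\int_0^t k(\B_r^{x_0})\,\rmd r\to k(x_0)$ inside the expectation. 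The smoothness of $g$ near $x_0$ then controls the pure heat contribution via the classical short-time asymptotic $t^{-1}(\ChHeat_tg(x_0)-g(x_0))\to\tfrac12\Delta g(x_0)$.
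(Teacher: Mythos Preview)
Your forward direction coincides with the paper's. For the converse you take a somewhat different route: the paper establishes the $L^1$-Bochner inequality \eqref{Eq:1-Bochner} for \emph{every} $f\in C_\comp^\infty(\mms)$ in weak form (integrated against a nonnegative test function $\phi$ supported where $|\nabla f|\neq 0$), whereas you aim for it pointwise at a single $x_0$ and only for the special $f$ manufactured from Lemma \ref{Le:Nice function}, then close the loop via ``(ii) $\Leftrightarrow$ (i)''. That shortcut is legitimate, since the proof of ``(ii) $\Rightarrow$ (i)'' in Section \ref{Sec:3} only ever uses these special functions.

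There is, however, a genuine gap in your computation of $v'(0)$. Your domination scheme via $|\rme^a-1|\leq |a|\,\rme^{|a|}$ together with Khasminskii's lemma controls only the \emph{negative} part of $k$: Kato decomposability gives $k^-\in\Kato(\mms)$ and hence uniform bounds on $\bbE\big[\rme^{p\int_0^t k^-(\B_r^{x_0})/2\,\rmd r}\big]$ for all $p\geq 1$, but says nothing about $k^+$, which is merely in $L^1_\loc(\mms)$. With $a=-\tfrac12\int_0^t k(\B_r^{x_0})\,\rmd r$ neither the factor $\rme^{|a|}$ nor the H\"older partner $\bbE\big[(\tfrac1t\int_0^t|k|)^q\big]$ is controlled, so your dominated-convergence passage is not justified as stated. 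The paper handles precisely this point by localizing with the first exit time $\tau^{x_0}_\varepsilon$ from a small ball: on $\{t<\tau^{x_0}_\varepsilon\}$ the continuous $k$ is bounded along the path and the limit is elementary, while on $\{t\geq\tau^{x_0}_\varepsilon\}$ one combines Cauchy--Schwarz, Khasminskii (for the $k^-$ factor only), and the exponential exit-time bound of Lemma \ref{Le:Hsu lemma} to kill the contribution. The same exit-time trick is also what makes your ``classical short-time asymptotic'' $t^{-1}(\ChHeat_tg(x_0)-g(x_0))\to\tfrac12\Delta g(x_0)$ rigorous, since $g=|\nabla f|$ is smooth only \emph{near} $x_0$: replace $g$ by a $C_\comp^\infty$ function agreeing with it on $\Ball_\varepsilon(x_0)$ and use Lemma \ref{Le:Hsu lemma} to discard the remainder. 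Once you insert this localization your pointwise argument goes through. (A minor side remark: invoking Theorem \ref{Th:First theorem}\textnormal{(i)} for stochastic completeness is circular in the converse direction, since that theorem \emph{assumes} $\Ric\geq k$; the paper instead deduces stochastic completeness directly from \eqref{TU} by rerunning the argument of Section \ref{Sec:Stochastic completeness} with \eqref{TU} in place of Theorem \ref{Th:Form FK}. Your pointwise computation does not actually need it.)
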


\begin{proof} If $k$ obeys $\Ric \geq k$ on $\mms$, then the claimed $\Ell^1$-gradient estimate is just a restatement of Theorem \ref{Th:Form FK} for exact $1$-forms together with \eqref{Eq:Commutation}.

Conversely, assume the $\Ell^1$-gradient estimate. A similar argument as in the proof of (i) in Theorem \ref{Th:First theorem} in Section \ref{Sec:Stochastic completeness} -- directly employing \eqref{TU} instead of Theorem \ref{Th:Form FK} -- shows that $\mms$ is stochastically complete. Let $f\in\Cont_\comp^\infty(\mms)$ and $x\in\mms$ with $\vert\nabla f(x)\vert \neq 0$. Let $\varepsilon > 0$ such that $\vert \nabla f\vert$ is bounded away from zero -- in particular smooth -- on $\overline{\Ball}_\varepsilon(x)$. By Kato's inequality for the Bochner Laplacian \cite[Proposition 2.2]{hess1980},  we have $\vert\nabla f\vert\in W^{1,2}(\mms)$. Thus, given a nonnegative $\phi \in \Cont_\comp^\infty(\mms)$ with support in $\Ball_\varepsilon(x)$, by the chain rule, \eqref{Eq:Formula Pt BM} and a standard representation of the quadratic form $\mathscr{E}$ in terms of $(\ChHeat_t)_{t\geq 0}$, see e.g.~page 15 in \cite{davies},
\begin{align}\label{Eq:EQU}
&\frac{1}{2}\int_\mms \vert \nabla f(y)\vert^{-1}\,\big\langle\nabla f(y),\nabla \Delta f(y)\big\rangle\,\phi(y)\d\meas(y)\nonumber\\
&\qquad\qquad = \frac{\rmd}{\rmd t}\bigg\vert_{0} \int_\mms \big[\vert \nabla \ChHeat_t f(y)\vert^2\big]^{1/2}\,\phi(y)\d\meas(y)\nonumber\\
&\qquad\qquad = \lim_{t\downarrow 0} \frac{1}{t}\int_\mms \big[\vert\nabla\ChHeat_tf(y) \vert - \vert\nabla f(y)\vert\big]\,\phi(y)\d\meas(y)\nonumber\\
&\qquad\qquad\leq \limsup_{t\downarrow 0} \frac{1}{t}\int_\mms \Big[\bbE\Big[\rme^{-\int_0^t k(\B_r^y)/2\d r}\,\vert\nabla f(\B_t^y)\vert\Big] - \vert\nabla f(y)\vert\Big]\,\phi(y)\d\meas(y)\nonumber\\
&\qquad\qquad \leq \limsup_{t\downarrow 0}\frac{1}{t}\int_\mms \big[\bbE\big[\vert\nabla f(\B_t^y)\vert\big] - \vert\nabla f(y)\vert\big]\,\phi(y)\d\meas(y)\nonumber\\
&\qquad\qquad\qquad\qquad + \limsup_{t\downarrow 0} \frac{1}{t}\int_\mms \bbE\Big[\big[\rme^{-\int_0^t k(\B_r^y)/2\d r} - 1\big]\,\vert\nabla f(\B_t^y)\vert\Big]\,\phi(y)\d\meas(y)\nonumber\\
&\qquad\qquad = -\frac{1}{2}\int_\mms \big\langle\nabla \vert\nabla f\vert(y),\nabla\phi(y) \big\rangle\d\meas(y)\\
&\qquad\qquad\qquad\qquad + \limsup_{t\downarrow 0} \frac{1}{t}\int_\mms\bbE\Big[\big[\rme^{-\int_0^t k(\B_r^y)/2\d r}-1\big]\,\vert\nabla f\vert(\B_t^y)\Big]\,\phi(y)\d\meas(y).\nonumber
\end{align}
It remains to estimate the latter limit. Let $\tau^y_\varepsilon$ be the first exit time of $X^y$ from $\Ball_\varepsilon(y)$. Since $k$ is bounded on the bounded set $\smash{\bigcup_{y\in \overline{\Ball}_\varepsilon(x)}\overline{\Ball}_\varepsilon(y)}$, and by continuity of Brownian sample paths, the dominated convergence theorem gives
\begin{align*}
&\limsup_{t\downarrow 0} \frac{1}{t}\int_\mms\bbE\Big[\big[\rme^{-\int_0^t k(\B_r^y)/2\d r} - 1\big]\,\vert\nabla f\vert(\B_t^y)\,\One_{\{t< \tau_\varepsilon^y\}}\Big]\,\phi(y)\d\meas(y)\\
&\qquad\qquad = -\frac{1}{2}\int_\mms k(y)\,\vert\nabla f(y)\vert\,\phi(y)\d\meas(y).
\end{align*}
\clearpage
\noindent Using the Cauchy-Schwarz inequality, Lemma \ref{Le:Khas} and Lemma \ref{Le:Hsu lemma}, for arbitrary $T>0$ we obtain
\begin{align*}
&\limsup_{t\downarrow 0} \frac{1}{t}\int_\mms\bbE\Big[\big[\rme^{-\int_0^t k(\B_r^y)/2\d r} - 1\big]\,\vert\nabla f\vert(\B_t^y)\,\One_{\{t\geq \tau^y_\varepsilon\}}\Big]\,\phi(y)\d\meas(y)\\
&\qquad\qquad \leq \Vert\nabla f\Vert_{\Ell^\infty}\int_\mms \bbE\Big[\big\vert \rme^{\int_0^T k^-(X_r^y)/2\d r} - 1\big\vert^2\Big]^{1/2}\,\bbP\big[\tau_\varepsilon^y \leq t\big]^{1/2}\, \phi(y)\d\meas(y)\\
&\qquad\qquad \leq \sqrt{2}\,\Vert\nabla f\Vert_{L^\infty}\,\bigg[\sup_{y\in\mms} \bbE\Big[\rme^{\int_0^T k^-(\B_r^y)\d r}\Big]^{1/2} + 1\bigg]\\
&\qquad\qquad\qquad\qquad\times\limsup_{t\downarrow 0} \frac{1}{t}\int_\mms\bbP\big[\tau_\varepsilon^y \leq t\big]^{1/2}\,\phi(y)\d\meas(y)= 0,
\end{align*}
and the $L^1$-Bochner inequality \eqref{Eq:1-Bochner}  follows after integrating \eqref{Eq:EQU} by parts and using the arbitrariness of $\phi$.
\end{proof}

\begin{remark}\label{Re:Schröd} One can replace $C_\comp^\infty(\mms)$ by $W^{1,2}(\mms)$ in Theorem \ref{Th:Sc}. This follows from (\ref{Eq:Commutation}) and the fact that under Kato decomposability, the Feynman--Kac formula for the heat semigroup on $1$-forms, Theorem \ref{Th:Form FK}, holds for all square integrable $1$-forms. (This formula has been shown in \cite{guneysu2012} in a more general context, and of course it also follows from Theorem \ref{Th:Form FK} by approximating forms in $\Gamma_{L^2}(T^*\mms)$ by elements of $\Gamma_{C^\infty_c}(T^*\mms)$ using Lemma \ref{Le:Khasminskii}.) In view of the Cauchy--Schwarz inequality it seems unlikely that the Feynman--Kac formula on $1$-forms holds for all square integrable $1$-forms under the weaker assumption \eqref{Eq:Exp integr assumption}, although we are not aware of a counterexample (which would be interesting to have). We refer the reader also to the recent \cite{boldt}, where Feynman--Kac formulas for general perturbations of order no larger than $1$ -- rather than just zeroth order perturbations -- of Bochner Laplacians on vector bundles have been treated.
\end{remark}

\begin{remark} Somewhat in line with the previous remark, assume that $k$ satisfies  \eqref{Eq:Exp integr assumption} instead of Kato decomposability. Of course, if $\Ric\geq k$ on $\mms$, the $\Ell^1$-gradient estimate from Theorem \ref{Th:Sc} then still holds by virtue of Theorem \ref{Th:Form FK}. However, as it becomes apparent from the above proof, the converse implication seems to be more involved and to require at least some higher order exponential integrability of $k^-$.
\end{remark}

\subsection{Schrödinger semigroups} \label{Schr Sem}
For Kato decomposable $k$, the right-hand side of \eqref{TU} has a more analytic interpretation in terms of the \emph{Schrödinger semigroup} associated to $k$, which is briefly discussed now. Assume in this section that $k$ is a (not necessarily continuous) function which is Kato decomposable and in $L^2_\loc(\mms)$. Then $\Delta- k$ is essentially self-adjont in $L^2(\mms)$  \cite{guneysu2017'}, and the \emph{Schrödinger semigroup} $(\Schr{k}_t)_{t\geq 0}$ is defined to be $\smash{\Schr{k}_t := \rme^{t(\Delta-k)/2}}$ via spectral calculus. This is a strongly continuous semigroup of bounded linear operators in $\Ell^2(\mms)$. As $k$ is Kato decomposable, $(\Schr{k}_t)_{t\geq 0}$ has a pointwise well-defined version which, for every $f\in L^2(\mms)$, can be expressed \cite{ guneysu2017} via Brownian motion $\B^x$ on $\mms$ in terms of% the Feynman--Kac formula
\begin{equation}\label{Eq:Feynman-Kac}
\Schr{k}_{t}f(x) = \bbE\Big[\rme^{-\int_0^t k(\B_r^x)/2\d r}\, f(\B_t^x)\, \One_{\{t<\zeta^x\}}\Big]\forevery{x\in\mms,\ t\geq 0}.
\end{equation}

We are going to show that this semigroup extends to a strongly continuous semigroup of bounded operators in $\Ell^p(\mms)$ for all $p\in [1,\infty)$, see Theorem \ref{Cor:Lp properties}. To this end, we record \emph{Khasminskii's lemma} (which relies on the Markov property of the underlying diffusion on $\mms$).

\begin{lemma}\label{Le:Khasminskii} Let $\mathsf{v}\in\Kato(\mms)$. Then for every $\delta>1$ there exists a finite constant $C \geq 0$ depending only on $\vert \mathsf{v}\vert$ and $\delta$ such that
\begin{equation*}
\sup_{x\in\mms} \bbE\Big[\rme^{\int_0^t \vert \mathsf{v}(\B_r^x)\vert\d r}\, \One_{\{t<\zeta^x\}}\Big] \leq \delta\,\rme^{C t}\forevery{t\geq 0}.
\end{equation*}
\end{lemma}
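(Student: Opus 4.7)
The plan is the classical two-step strategy: first obtain a small-time bound by expanding the exponential into a geometric series on the probabilistic simplex, then bootstrap to all $t\geq 0$ via submultiplicativity inherited from the Markov property.

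First, define $\phi(t_0):=\sup_{x\in\mms}\int_0^{t_0}\bbE\bigl[\vert\mathsf{v}(\B_r^x)\vert\,\One_{\{r<\zeta^x\}}\bigr]\d r$; the Kato hypothesis (combined with the bound $\One_{\{r<\zeta^x\}}\leq 1$) ensures $\phi(t_0)\downarrow 0$ as $t_0\downarrow 0$. Given $\delta>1$, I would choose $t_0>0$ such that $\phi(t_0)\leq 1-\delta^{-1}<1$. For $t\in[0,t_0]$, monotone convergence together with the symmetric simplex identity $\bigl(\int_0^t g(r)\d r\bigr)^n = n!\int_{0\leq r_1\leq\cdots\leq r_n\leq t}\prod_{i=1}^n g(r_i)\d r_i$ gives
\begin{equation*}
\bbE\Bigl[\rme^{\int_0^t\vert\mathsf{v}(\B_r^x)\vert\d r}\,\One_{\{t<\zeta^x\}}\Bigr] = \sum_{n=0}^\infty \int_{0\leq r_1\leq\cdots\leq r_n\leq t}\bbE\Bigl[\prod_{i=1}^n\vert\mathsf{v}(\B_{r_i}^x)\vert\,\One_{\{t<\zeta^x\}}\Bigr]\d r_1\cdots\d r_n.
\end{equation*}
Applying the Markov property at $r_n$ to integrate out the innermost variable yields
\begin{equation*}
\int_{r_{n-1}}^t\bbE\bigl[\vert\mathsf{v}(\B_{r_n}^x)\vert\,\One_{\{r_n<\zeta^x\}}\mid \scrF_{r_{n-1}}\bigr]\d r_n \;\leq\; \phi(t_0)
\end{equation*}
a.s.\ on $\{r_{n-1}<\zeta^x\}$. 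Iterating this peeling from $r_n$ down to $r_1$ (each step picking up one factor of $\phi(t_0)$) shows that the $n$-th term in the sum is bounded by $\phi(t_0)^n$ uniformly in $x$, so the geometric series yields
\begin{equation*}
A(t)\;:=\;\sup_{x\in\mms}\bbE\bigl[\rme^{\int_0^t\vert\mathsf{v}(\B_r^x)\vert\d r}\,\One_{\{t<\zeta^x\}}\bigr]\;\leq\;\frac{1}{1-\phi(t_0)}\;\leq\;\delta\qquad\text{for every }t\in[0,t_0].
\end{equation*}

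Second, to extend to arbitrary $t$, I would invoke the Markov property at time $s$, which on $\{s+u<\zeta^x\}$ identifies the future $(\B_{s+\cdot}^x)$ with a Brownian motion issuing from $\B_s^x$, giving
\begin{equation*}
\bbE\bigl[\rme^{\int_0^{s+u}\vert\mathsf{v}(\B_r^x)\vert\d r}\,\One_{\{s+u<\zeta^x\}}\bigr] = \bbE\Bigl[\rme^{\int_0^{s}\vert\mathsf{v}(\B_r^x)\vert\d r}\,\One_{\{s<\zeta^x\}}\,A_u(\B_s^x)\Bigr],
\end{equation*}
where $A_u(y):=\bbE\bigl[\rme^{\int_0^u\vert\mathsf{v}(\B_r^y)\vert\d r}\,\One_{\{u<\zeta^y\}}\bigr]$. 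Taking suprema yields the submultiplicativity $A(s+u)\leq A(s)\,A(u)$. For any $t\geq 0$, writing $t=Nt_0+r$ with $N\in\N_0$ and $r\in[0,t_0)$, this gives $A(t)\leq A(t_0)^N\,A(r)\leq \delta^{N+1}\leq \delta\,\rme^{Ct}$ with $C:=\log(\delta)/t_0$, which is the required bound.

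The main technical nuisance — and hence the step requiring most care — is the iterated Markov peeling on the ordered simplex: one must verify that conditioning at $r_{n-1}$ correctly identifies the innermost integral with an expectation of the Brownian motion from $\B_{r_{n-1}}^x$ of length $t-r_{n-1}\leq t_0$, that the lifetime event $\{r_n<\zeta^x\}$ factors appropriately through $\{r_{n-1}<\zeta^x\}$ and the post-$r_{n-1}$ lifetime, and that the factorials cancel exactly between the $1/n!$ of the Taylor series and the $n!$ from the simplex representation so that no combinatorial factor remains to spoil the geometric sum.
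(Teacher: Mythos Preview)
Your proposal is correct and follows precisely the classical Aizenman--Simon strategy that the paper points to (via \cite{guneysu2015} and \cite{aizenman1982}): geometric-series expansion on the ordered simplex with iterated Markov peeling for small times, followed by submultiplicativity to reach all $t\geq 0$. You also handle the lifetime indicators correctly, which is exactly the ``small additional difficulty of stochastic incompleteness'' the paper flags.
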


Up to dealing with the small additional difficulty of stochastic incompleteness, the proof given in \cite{guneysu2015} follows from a careful examination of the classical Euclidean proof given in \cite{aizenman1982} (which however states a bound of the form $C_1\,\rme^{C_2 t}$). The above stronger bound has played a crucial role in the context of total variation considered in \cite{guneysu2015}. Analogous statements for general Hunt processes having the Feller property can also be found in \cite{demuth}.

\begin{theorem}\label{Cor:Lp properties} Let $k\colon \mms\to\R$ be a Kato decomposable function in $L^2_\loc(\mms)$. Then for every $\delta > 1$ there exists a finite constant $C\geq 0$  depending only on $k^-$ and $\delta$ such that, for every $p\in[1,\infty]$ and every $f\in L^2(\mms)\cap L^p(\mms)$, we have
\begin{equation}\label{Eq:Lq bounds on Schrödinger}
\big\Vert \Schr{k}_t f \big\Vert_{\Ell^p} \leq \delta\,\rme^{Ct}\,\Vert f\Vert_{\Ell^p}\forevery{t\geq 0}.
\end{equation}
In particular, for every $p\in[1,\infty]$, $\smash{(\Schr{k}_t)_{t\geq 0}}$ extends to a semigroup of bounded operators from $\Ell^p(\mms)$ into $\Ell^p(\mms)$ which indeed satisfies \eqref{Eq:Lq bounds on Schrödinger} for every $f\in\Ell^p(\mms)$ and, if $p<\infty$, is strongly continuous.
\end{theorem}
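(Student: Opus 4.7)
The plan is to reduce everything to the Feynman--Kac representation \eqref{Eq:Feynman-Kac}: use it together with Khasminskii's lemma for the $\Ell^\infty$-bound, exploit self-adjointness of $\Schr{k}_t$ on $\Ell^2(\mms)$ together with positivity preservation of $(\Schr{k}_t)_{t\geq 0}$ -- a consequence of $\rme^{-\int_0^t k(\B_r^x)/2\d r} \geq 0$ -- to extract the $\Ell^1$-bound by duality, Riesz--Thorin interpolate for intermediate $p$, and extend from $\Ell^2 \cap \Ell^p$ to all of $\Ell^p$ by density. Strong continuity for $p < \infty$ will follow from a $3\varepsilon$-approximation argument relying on the uniform operator bound.

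For the $\Ell^\infty$-endpoint, given $f \in \Ell^2(\mms) \cap \Ell^\infty(\mms)$ the pointwise identity \eqref{Eq:Feynman-Kac} together with $\rme^{-\int_0^t k/2} \leq \rme^{\int_0^t k^-/2}$ yields
\begin{equation*}
\bigl\vert \Schr{k}_t f(x)\bigr\vert \leq \Vert f\Vert_{\Ell^\infty}\,\bbE\Big[\rme^{\int_0^t k^-(\B_r^x)/2\d r}\,\One_{\{t<\zeta^x\}}\Big].
\end{equation*}
Since $\Kato(\mms)$ is a vector space, $k^-/2 \in \Kato(\mms)$, and applying Lemma~\ref{Le:Khasminskii} to $\mathsf{v} = k^-/2$ then provides, for any $\delta > 1$, a constant $C \geq 0$ depending only on $k^-$ and $\delta$ such that the right-hand side is at most $\delta\,\rme^{Ct}\,\Vert f\Vert_{\Ell^\infty}$. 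This is \eqref{Eq:Lq bounds on Schrödinger} for $p = \infty$. For the $\Ell^1$-endpoint I would decompose $f \in \Ell^1(\mms) \cap \Ell^2(\mms)$ as $f^+ - f^-$; by positivity preservation one has $\Schr{k}_t f^\pm \geq 0$, so self-adjointness on $\Ell^2(\mms)$ gives, for every nonnegative $g \in \Ell^\infty(\mms) \cap \Ell^2(\mms)$ with $\Vert g\Vert_{\Ell^\infty} \leq 1$,
\begin{equation*}
\int_\mms (\Schr{k}_t f^\pm)\, g \d\meas = \int_\mms f^\pm\, (\Schr{k}_t g)\d\meas \leq \delta\,\rme^{Ct}\,\Vert f^\pm\Vert_{\Ell^1}.
\end{equation*}
Taking $g$ to be indicators of an increasing compact exhaustion of $\mms$ and passing to the limit by monotone convergence yields $\Vert \Schr{k}_t f\Vert_{\Ell^1} \leq \delta\,\rme^{Ct}\,\Vert f\Vert_{\Ell^1}$.

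The Riesz--Thorin interpolation theorem, applied to these two endpoint bounds on the common dense subspace $\Ell^1(\mms) \cap \Ell^2(\mms) \cap \Ell^\infty(\mms)$, then delivers \eqref{Eq:Lq bounds on Schrödinger} for every $p \in (1,\infty)$ on $\Ell^2(\mms) \cap \Ell^p(\mms)$. Given these uniform estimates and the density of $\Ell^2(\mms) \cap \Ell^p(\mms)$ in $\Ell^p(\mms)$ for $p < \infty$, each $\Schr{k}_t$ extends uniquely to a bounded operator on $\Ell^p(\mms)$ obeying the same bound, and the semigroup identity passes to the limit by continuity. The step I expect to be most delicate is strong continuity at $t = 0$ for $p < \infty$: by the semigroup law together with the uniform operator bound $\delta\,\rme^{Ct}$, a $3\varepsilon$-argument reduces the question to verifying $\Schr{k}_t f \to f$ in $\Ell^p(\mms)$ on a dense subset such as $\Cont_\comp(\mms) \cap \Ell^2(\mms)$. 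There, the decomposition $\Schr{k}_t f - f = (\Schr{k}_t f - \ChHeat_t f) + (\ChHeat_t f - f)$ isolates the unperturbed heat-semigroup contribution, which is strongly continuous on $\Ell^p(\mms)$ by standard theory, from the perturbative contribution $\bbE[(\rme^{-\int_0^t k/2} - 1)\,f(\B_t^x)\,\One_{\{t<\zeta^x\}}]$, which by Hölder's inequality and a further application of Khasminskii's lemma to $p\,k^-/2 \in \Kato(\mms)$ vanishes in $\Ell^p(\mms)$ as $t \downarrow 0$.
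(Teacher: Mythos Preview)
Your argument follows essentially the same route as the paper: Feynman--Kac plus Khasminskii for $p=\infty$, an exhaustion/duality argument for $p=1$, Riesz--Thorin interpolation in between, and then the decomposition $\Schr{k}_t f - f = (\Schr{k}_t f - \ChHeat_t f) + (\ChHeat_t f - f)$ for strong continuity. Your $\Ell^1$-bound via self-adjointness and testing against indicators of an exhaustion is precisely the ``elementary exhaustion argument'' the paper alludes to (referring to \cite{guneysu2017} for details).

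There is one genuine omission. You extend $\Schr{k}_t$ to $\Ell^p(\mms)$ only for $p<\infty$ by density of $\Ell^2\cap\Ell^p$, but the theorem also asserts the extension for $p=\infty$, and $\Ell^2(\mms)\cap\Ell^\infty(\mms)$ is not dense in $\Ell^\infty(\mms)$. The paper handles this case separately: given $f\in\Ell^\infty(\mms)$, one approximates pointwise by $f_n := f\,\One_{\Ball_n(o)}\in\Ell^2\cap\Ell^\infty$ and uses the Feynman--Kac representation \eqref{Eq:Feynman-Kac} together with Lemma~\ref{Le:Khasminskii} and dominated convergence to define $\Schr{k}_tf$ as the pointwise limit of $\Schr{k}_tf_n$, which automatically inherits the bound.

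A smaller point: in your strong-continuity step, invoking ``H\"older and Khasminskii for $pk^-/2$'' gives a uniform \emph{bound} on the perturbative term but not, by itself, its vanishing in $\Ell^p$ as $t\downarrow 0$---you still need to argue pointwise convergence of the inner expectation (using $k\in\Ell^2_\loc$ so that $\int_0^t k(\B^x_r)\d r\to 0$ a.s.) and then pass to the limit in the outer integral. The paper does this via two applications of dominated convergence, taking $f\in\Ell^2\cap\Ell^p\cap\Ell^\infty$ and using the bound $\big\vert\rme^{-\int_0^t k/2}-1\big\vert\leq \rme^{\int_0^T k^-/2}+1$; your choice $f\in\Cont_\comp(\mms)$ is equally serviceable and arguably cleaner for the outer dominated convergence.
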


\begin{proof} The idea to prove \eqref{Eq:Lq bounds on Schrödinger} is to use  \eqref{Eq:Feynman-Kac} together with Lemma \ref{Le:Khasminskii} to show the desired inequality in the cases $p=\infty$ and $p=1$ (which needs an additional, but elementary exhaustion argument) and to apply Riesz--Thorin's theorem to extend it to all exponents $p\in[1,\infty]$. See \cite[Theorem IX.2, Corollary IX.4]{guneysu2017} for details.

%The existence of an extension of $(\Schr{k}_t)_{t\geq 0}$ to a semigroup of bounded operators from $\Ell^p(\mms)$ into $\Ell^p(\mms)$ for every $p\in [1,\infty]$ still satisfying \eqref{Eq:Lq bounds on Schrödinger} is standard by approximation, but we include the argument for the convenience of the reader, compare with the proof of Lemma \ref{Le:Linear operator}. For $p<\infty$ and $f\in\Ell^p(\mms)$, for any sequence $(f_n)_{n\in\N}$ in $\Ell^2(\mms)\cap\Ell^p(\mms)$ converging to $f$ in $\Ell^p(\mms)$, \eqref{Eq:Lq bounds on Schrödinger} implies that $(\Schr{k}_tf_n)_{n\in\N}$ is a Cauchy sequence in $\Ell^p(\mms)$. Thus, we define $\Schr{k}_tf$ as the $\Ell^p$-limit of the latter sequence as $n\to\infty$, and \eqref{Eq:Lq bounds on Schrödinger} again shows that this definition is independent of the choice of $(f_n)_{n\in\N}$. In the case $p=\infty$, given any reference point $o\in\mms$, the sequence $(f_n)_{n\in\N}$ defined by $f_n := f\,\One_{\Ball_n(o)}$ converges pointwise to $f$. By \eqref{Eq:Feynman-Kac} and Lemma \ref{Le:Khasminskii}, the dominated convergence theorem shows that the pointwise limit $\Schr{k}_tf$ of $(\Schr{k}_tf_n)_{n\in\N}$ as $n\to\infty$ is well-defined, and again this definition does not depend on the choice of $(f_n)_{n\in\N}$ once it is demanded that $\sup_{n\in\N}\Vert f_n\Vert_{\Ell^\infty}<\infty$. It is clear that both approximation procedures preserve  \eqref{Eq:Lq bounds on Schrödinger}.

The existence of an extension of $(\Schr{k}_t)_{t\geq 0}$ to a semigroup of bounded operators from $\Ell^p(\mms)$ into $\Ell^p(\mms)$ for every $p\in [1,\infty]$ still satisfying \eqref{Eq:Lq bounds on Schrödinger} is then standard by approximation. We include the argument for $p=\infty$ for the convenience of the reader. Given $f\in\Ell^\infty(\mms)$ and any reference point $o\in\mms$, the sequence $(f_n)_{n\in\N}$ defined by $f_n := f\,\One_{\Ball_n(o)}\in\Ell^2(\mms)\cap\Ell^\infty(\mms)$ converges pointwise to $f$. By \eqref{Eq:Feynman-Kac} and Lemma \ref{Le:Khasminskii}, the dominated convergence theorem shows that the pointwise limit $\Schr{k}_tf$ of $(\Schr{k}_tf_n)_{n\in\N}$ as $n\to\infty$ is well-defined. This definition does not depend on the choice of $(f_n)_{n\in\N}$ as long as $\sup_{n\in\N}\Vert f_n\Vert_{\Ell^\infty}<\infty$. It is also clear that this procedure preserves  \eqref{Eq:Lq bounds on Schrödinger}.

To show strong continuity of $(\Schr{k}_t)_{t\geq 0}$ in $\Ell^p(\mms)$ for $p<\infty$, by approximation and \eqref{Eq:Lq bounds on Schrödinger}, it suffices to show continuity of $\smash{t\mapsto\Schr{k}_tf}$ on $[0,\infty)$ in $\Ell^p(\mms)$ for $f\in \Ell^2(\mms)\cap\Ell^p(\mms)\cap\Ell^\infty(\mms)$. By the semigroup property, we may and will restrict ourselves to the proof of continuity at $t=0$. Given any $x\in\mms$, note that a.s., we have $\smash{\int_0^t k(\B_r^x)\d r \to 0}$ as $t\downarrow 0$ since $k\in\Ell^2_\loc(\mms)$, and that
\begin{equation}\label{Eq:Boundbound}
\Big\vert \rme^{-\int_0^t k(\B_r^x)/2\d r} - 1\Big\vert \leq \rme^{\int_0^T k^-(\B_r^x)/2\d r} + 1
\end{equation}
for every $t\in [0,T]$ is satisfied a.s.~for fixed $T>0$. Since
\begin{equation*}
\int_\mms \big\vert \Schr{k}_t f-\ChHeat_t f \big\vert^p\d\meas \leq \int_\mms \bbE\Big[\Big\vert \rme^{-\int_0^t k(\B_r^x)/2\d r} - 1\Big\vert\,\vert f(\B_t^x)\vert\,\One_{\{t<\zeta^x\}}\Big]^p\d\meas(x),
\end{equation*}
applying the dominated convergence theorem twice using \eqref{Eq:Boundbound} as well as Lemma \ref{Le:Khasminskii}, we obtain $\Schr{k}_t f - \ChHeat_tf \to 0$ in $\Ell^p(\mms)$ as $t\downarrow 0$. The result follows immediately by strong continuity of the heat flow $(\ChHeat_t)_{t\geq 0}$ in $\Ell^p(\mms)$.
\end{proof}

We close this section by noting that, using quadratic form techniques \cite{demuth, guneysu2017, stollmann1996} in order to define the Schrödinger operator, it is possible to treat $L^1_\loc$-potentials $k$ rather than $L^2_\loc$ along the same lines.

\subsection{Proof of Theorem \ref{ayx}}\label{Check}

Now, we present one possible step-by-step analysis in order to check the existence of (continuous) Kato decomposable lower Ricci bounds for $\mms$, along with proving Theorem \ref{ayx}. Let us abbreviate $d:=\dim(\mms)$.

\begin{proof}[Proof of Theorem \ref{ayx}] Let $\Xi:M\to (0,\infty)$ be a Borel function such that
	\begin{equation}\label{Ugh}
	\sup_{y\in M} \sfp_t(x,y)\leq \Xi(x)\,\big[t^{-d/2}+1]\quad\text{for every }x\in\mms,\ t\in (0,1].
	\end{equation}
	(Using a parabolic $L^1$-mean value inequality, it has been shown in \cite[Theorem 2.9]{guneysu2017'}, see also \cite[Remark IV.17]{guneysu2017}, that every Riemannian manifold admits a canonical choice of a function $\Xi$ as above.)  \cite[Proposition VI.10]{guneysu2017} states that for every $p\in [1,\infty)$, if $d=1$, and every $p\in (d/2,\infty)$, if $d\geq 2$, we have $L^p_{\Xi}(\mms)+L^{\infty}(\mms)\subset \Kato(\mms)$. Thus, any locally $\meas$-integrable function $k\colon\mms\to\mathbb{R}$ such that 
	\begin{equation*}
	k^-\in L^p_{\Xi}(\mms)+L^{\infty}(\mms)
	\end{equation*}
	for some $\Xi$ and $p$ as above is Kato decomposable. 
	
	Now let $\langle\cdot,\cdot\rangle$ be quasi-isometric to a complete metric on $\mms$ whose Ricci curvature is bounded from below by constant. Then, as the Li--Yau heat kernel estimate, the Cheeger--Gromov volume estimate and the local volume doubling property are qualitatively stable under quasi-isometry, it follows from the considerations in \cite[Example IV.18]{guneysu2017} that there exists a constant $C>0$ such that
	\begin{equation*}
	\mathsf{p}_t(x,y)\leq C\, \meas\big[\Ball_1(x)\big]^{-1}\, \big[t^{-d/2}+1\big]\quad\text{for every }x,y\in\mms,\ t\in (0,1].
	\end{equation*}
	Thus every $k:\mms\to\mathbb{R}$ such that, choosing $\smash{\Xi:= \meas\big[\Ball_1(\cdot)\big]^{-1}}$, one has
	\begin{equation*}
	k^-\in L^p_{\Xi}(\mms)+L^{\infty}(\mms)
	\end{equation*}
	for some $p$ as in the previous step is Kato decomposable.
\end{proof}

\begin{remark} The previous proof shows that the assertion of Theorem \ref{ayx} remains valid if the inverse volume function is replaced by any function obeying \eqref{Ugh}.
\end{remark}

\begin{example}\label{Exa} Assume that $\mms$ is a model manifold in the sense of \cite{grigoryan2009}, meaning that $\mms=\mathbb{R}^d$ as a manifold with $d\geq 2$, and that the Riemannian metric $\langle\cdot,\cdot\rangle$ is given in polar coordinates as $\rmd r^2 +\psi(r)\d \theta^2$, where $\psi$ is a smooth positive function on $(0,\infty)$. The volume of balls on such manifolds does not depend on the center, and the Ricci curvature behaves in the radial direction like $\psi''/\psi - (d-1)(\psi')^2/\psi^2$, see e.g.~page 266 in \cite{besse}. Assume now
\begin{align*}\label{aspq}
\big(\psi''/\psi-(d-1)(\psi')^2/\psi^2\big)^-\in L^p_{\psi^{d-1}}((0,\infty))+L^{\infty}((0,\infty))\quad\text{for some }p>d/2,
\end{align*}
where $\smash{\Ell^p_{\psi^{d-1}}((0,\infty))}$ is the $\Ell^p$-space of functions w.r.t.~$\smash{\One_{(0,\infty)}\,\psi^{d-1}\,\mathscr{L}^1}$. As the volume measure behaves in the radial direction as $\psi^{d-1}(r)\d r$, it follows that the Ricci curvature is bounded from below by a function with negative part in $L^p(\mms)+L^{\infty}(\mms)$. 

To make sure that the latter function space is included in $\Kato(\mms)$ it suffices from the above considerations to assume that there exists a smooth positive function $\psi_0$ on $(0,\infty)$ such that 
\begin{enumerate}[leftmargin=1.25cm,label=\alph*.]
\item $\psi_0(0)=0$, $\psi_0'(0)=1$ and $\psi_0''(0)=0$,
\item $\psi_0''/\psi_0-(d-1)(\psi_0')^2/\psi_0^2$ is uniformly bounded from below by a constant, and
\item $\psi_0/C\leq \psi\leq C\psi_0$ for some constant $C>1$.
\end{enumerate}  
Indeed, a.~guarantees that there exists a complete metric $g_0$ on $\mms$ which -- in polar coordinates -- is written as $g_0=\rmd r^2 +\psi_0(r)\d \theta^2$. Assumption b.~guarantees that the Ricci curvature associated to $g_0$  is bounded from below by a constant, and c.~implies that $g$ is quasi-isometric to $g_0$. For instance, one can take the Euclidean metric corresponding to $\psi_0(r):=r$ or the hyperbolic metric corresponding to $\psi_0(r)=\sinh(r)$ as reference metrics.
\end{example}

\end{document}